\newtheorem{thm}{Theorem}[section]
\newtheorem{Lemma}[thm]{Lemma}
\newtheorem{Proposition}[thm]{Proposition}
\newtheorem{Corollary}[thm]{Corollary}
\newtheorem*{thm*}{Theorem}
\theoremstyle{definition}
\newtheorem{Definition}[thm]{Definition}
\newtheorem{Remark}[thm]{Remark}
\newtheorem{Example}[thm]{Example}
\definecolor{wwwwww}{rgb}{0.4,0.4,0.4}
\DeclareMathOperator{\Pic}{Pic}
\DeclareMathOperator{\expdim}{expdim}
\DeclareMathOperator{\Sym}{Sym}
\DeclareMathOperator{\Sec}{\mathbb{S}ec}
\begin{document}

\title{On secant defectiveness and identifiability of Segre-Veronese varieties}

\author[Antonio Laface]{Antonio Laface}
\address{\sc Antonio Laface\\
Departamento de Matematica, Universidad de Concepci\'on\\
Casilla 160-C, Concepci\'on\\
Chile}
\email{alaface@udec.cl}

\author[Alex Massarenti]{Alex Massarenti}
\address{\sc Alex Massarenti\\ Dipartimento di Matematica e Informatica, Universit\`a di Ferrara, Via Machiavelli 30, 44121 Ferrara, Italy}
\email{alex.massarenti@unife.it}

\author[Rick Rischter]{Rick Rischter}
\address{\sc Rick Rischter\\
Universidade Federal de Itajub\'a (UNIFEI)\\ 
Av. BPS 1303, Bairro Pinheirinho\\ 
37500-903, Itajub\'a, Minas Gerais\\ 
Brazil}
\email{rischter@unifei.edu.br}

\date{\today}
\subjclass[2010]{Primary 14M25, 14N15;
Secondary 14C20, 14M17, 14Q15}
\keywords{Toric varieties, secant varieties, identifiability}

\begin{abstract}
We give an almost asymptotically sharp bound for the non secant defectiveness and identifiability of Segre-Veronese varieties. We also provide new examples of defective Segre-Veronese varieties.
\end{abstract}

\maketitle
\setcounter{tocdepth}{1}
\tableofcontents

\section{Introduction}
The \textit{$h$-secant variety} $\mathbb{S}ec_{h}(X)$ of a non-degenerate $n$-dimensional variety $X\subseteq\mathbb{P}^N$ is the Zariski closure of the union of all linear spaces spanned by collections of $h$ points of $X$. The \textit{expected dimension} of $\mathbb{S}ec_{h}(X)$ is $\expdim(\mathbb{S}ec_{h}(X)):= \min\{nh+h-1,N\}$. In general, the actual dimension of $\mathbb{S}ec_{h}(X)$ may be smaller than the expected one. In this case, following \cite[Section 2]{CC10} we say that $X$ is \textit{$h$-defective}. The problem of determining the actual dimension of secant varieties, and its relation with the dimension of certain linear systems of hypersurfaces with double points, have a very long history in algebraic geometry, and can be traced back to the Italian school \cite{Sc08}, \cite{Se01}, \cite{Te11}. Since then the geometry of secant varieties has been studied and used by many authors in various contexts \cite{CC10}, \cite{Ru03}, and the problem of secant defectiveness has been widely studied for Segre-Veronese varieties, Grassmannians, Lagrangian Grassmannians, Spinor varieties and flag varieties \cite{AH95}, \cite{AB13}, \cite{AOP09}, \cite{CGG05}, \cite{CGG08},\cite{CGG11}, \cite{LP13}, \cite{BBC12}, \cite{BCC11}, \cite{MR19}, \cite{AMR19}, \cite{FMR20}, \cite{FCM19}.

An important concept related to the theory of secant varieties is that of \textit{identifiability}. We say that a point $p\in\mathbb{P}^N$ is $h$-identifiable, with respect to a non-degenerated variety $X\subseteq\mathbb{P}^N$, if it lies on a unique $(h-1)$-plane in $\mathbb{P}^N$ that is $h$-secant to $X$. Especially when $\mathbb{P}^N$ can be interpreted as a tensor space, identifiability and tensor decomposition algorithms are central in applications for instance in biology, Blind Signal Separation, data compression algorithms, analysis of mixture models psycho-metrics, chemometrics, signal processing, numerical linear algebra, computer vision, numerical analysis, neuroscience and graph analysis \cite{DD13a}, \cite{DD13b}, \cite{DD15}, \cite{KAD11}, \cite{SB00}, \cite{BK09}, \cite{CGLM08}, \cite{LO15}, \cite{MR13}. In pure mathematics identifiability questions often appears in rationality problems \cite{MM13}, \cite{Ma16}.   

Let $SV^{n_1,\dots,n_r}_{d_1,\dots,d_r}$ be the Segre-Veronese variety given as the image, in $\mathbb{P}^N$ with $N = \prod_{i=1}^r\binom{n_i+d_i}{d_i}-1$, of $\mathbb{P}^{n_1}\times\dots\times\mathbb{P}^{n_r}$ under the embedding induced by the complete linear system $\big|\mathcal{O}_{\mathbb{P}^{n_1}\times\dots\times\mathbb{P}^{n_r}}(d_1,\dots, d_r)\big|$.

For Segre-Veronese varieties the problem of secant defectiveness has been solved in some very special cases, mostly for products of few factors \cite{CGG05}, \cite{AB09}, \cite{Ab10}, \cite{BCC11}, \cite{AB12}, \cite{BBC12}, \cite{AB13}. Secant defectiveness for Segre-Veronese products  $\mathbb{P}^{1}\times\dots\times\mathbb{P}^{1}$, with arbitrary number of factors and degrees, was completely settled in \cite{LP13}. In general, $h$-defectiveness is classified only for small values of $h$ \cite[Proposition 3.2]{CGG05}. Moreover, results on the identifiability of Segre-Veronese varieties have been recently given in \cite{FCM20}, and in \cite{BBC18} under hypotheses on non secant defectiveness. In general, $h$-defectiveness of Segre products $\mathbb{P}^{n_1}\times\dots\times\mathbb{P}^{n_r}\subseteq\mathbb{P}^N$ is classified only for $h\leq 6$ \cite{AOP09}.

Our main results on non secant defectiveness and identifiability of Segre-Veronese varieties in Theorem \ref{main_SV} and Corollary \ref{Id_SV} can be summarized as follows.

\begin{thm}\label{main2}
The Segre-Veronese variety $SV^{n_1,\dots,n_r}_{d_1,\dots,d_r}\subseteq\mathbb{P}^N$ is not $h$-defective for 
$$h\leq \frac{d_j}{n_j+d_j}\frac{1}{1+\sum_{i=1}^r n_i}\prod_{i=1}^r \binom{n_i+d_i}{d_i}$$
where $\frac{n_j}{d_j} = \max_{1\leq i\leq r}\left\lbrace\frac{n_i}{d_i}\right\rbrace$. Furthermore, if in addition $2\sum_{i=1}^r n_i < \frac{d_j}{n_j+d_j}\frac{1}{1+\sum_{i=1}^r n_i}\prod_{i=1}^r \binom{n_i+d_i}{d_i}$, under the bound above, $SV^{n_1,\dots,n_r}_{d_1,\dots,d_r}\subseteq\mathbb{P}^N$ is $(h-1)$-identifiable.
\end{thm}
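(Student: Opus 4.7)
The plan is to prove non-$h$-defectiveness via Terracini's lemma, translating it into the statement that the linear system $\mathcal{L}\subseteq|\mathcal{O}(d_1,\dots,d_r)|$ on $X=\mathbb{P}^{n_1}\times\cdots\times\mathbb{P}^{n_r}$ cut out by $h$ general double base points has the expected dimension $\max\{N-h(1+\sum n_i),\,-1\}$. I would then establish this dimensional statement by a Horace-type inductive degeneration, and afterwards deduce identifiability from a standard criterion.

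For the induction, let $j$ realize $n_j/d_j=\max_i n_i/d_i$, let $H\cong\mathbb{P}^{n_j-1}\subset\mathbb{P}^{n_j}$ be a hyperplane, and consider the divisor $D=\mathbb{P}^{n_1}\times\cdots\times H\times\cdots\times\mathbb{P}^{n_r}$, itself a Segre-Veronese of multidegree $(d_1,\dots,d_r)$. Specializing $k$ of the $h$ double points to lie on $D$, the restriction/residual short exact sequence splits the problem into a trace system on $D$ with $k$ double points of the same multidegree (lowering $n_j$ to $n_j-1$) and a residual system on $X$ of multidegree $(d_1,\dots,d_j-1,\dots,d_r)$ carrying the remaining $h-k$ double and $k$ simple points. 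The max-ratio choice of $j$ preserves the inductive hypothesis under decrement of $d_j$, while the trace system is handled by induction on $n_j$. Choosing $k$ to saturate the inductive bound at each level and summing over the $d_j$ steps of the recursion via the hockey-stick identity $\sum_{k=0}^{d_j-1}\binom{n_j-1+k}{k}=\binom{n_j+d_j-1}{d_j-1}$ will yield the stated inequality; the factor $\tfrac{d_j}{n_j+d_j}=\binom{n_j+d_j-1}{d_j-1}/\binom{n_j+d_j}{d_j}$ is precisely the arithmetic loss incurred by each Horace specialization.

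For the $(h-1)$-identifiability claim, I would combine the non-defectiveness just established with an identifiability criterion in the spirit of Chiantini-Ciliberto, or with the refinement of \cite{FCM20}: if $SV$ is not $h$-defective and the general tangent hyperplane to $\Sec_{h-1}(SV)$ is not tangent to $SV$ along a positive-dimensional locus, then $\Sec_{h-1}(SV)$ is identifiable. The extra hypothesis $2\sum n_i<\tfrac{d_j}{n_j+d_j}\tfrac{1}{1+\sum n_i}\prod\binom{n_i+d_i}{d_i}$ is exactly the slack needed to ensure, by applying the same Horace argument one step further, that adjoining one more general tangent space still yields the expected dimensional increment; this rules out weak defectiveness and delivers identifiability.

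The technical heart of the argument will be the Horace bookkeeping: verifying at every stage that the prescribed number of double points can be specialized onto $D$ without unforeseen collisions, and that the induction hypothesis applies verbatim to both the trace and residual pieces after the multidegree decrement. The choice of $j$ as a maximizer of $n_i/d_i$ is what makes this recursion close under iteration, and the combinatorial bookkeeping must be executed with care to obtain the precise constant $\tfrac{d_j}{n_j+d_j}$ appearing in the statement.
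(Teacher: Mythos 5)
Your route is genuinely different from the paper's, and as written it has real gaps rather than just deferred details. The paper does not use a Horace degeneration at all: it realizes $SV^{n_1,\dots,n_r}_{d_1,\dots,d_r}$ as the toric variety of the polytope $P=\Delta^{n_1}_{d_1}\times\cdots\times\Delta^{n_r}_{d_r}$, shows (Proposition \ref{hyp}) that the maximal number of lattice points on a hyperplane section of $P$ is attained on a facet, identifies that facet as the one minimizing $d_i/n_i$, and then applies the purely combinatorial criterion of Theorem \ref{hyperprop}: the variety is not $h$-defective for $h\leq(|P\cap M|-m)/(1+\sum n_i)$. The factor $\tfrac{d_j}{n_j+d_j}=\binom{n_j+d_j-1}{d_j-1}/\binom{n_j+d_j}{d_j}$ that you attribute to ``arithmetic loss per Horace specialization'' is in the paper simply the proportion of lattice points of $P$ not lying on the largest facet. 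Underlying Theorem \ref{hyperprop} is a Terracini-type matrix argument (Proposition \ref{pro:rank}) in which tangent spaces at points of one-parameter subgroups are put in general position by separating simplices of lattice points with linear forms; no specialization of double points onto divisors ever occurs.

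The concrete gaps in your proposal are these. First, the claim that ``the induction hypothesis applies verbatim to both the trace and residual pieces'' is exactly the step that fails for Segre--Veronese varieties: the residual system carries $k$ simple base points in addition to $h-k$ double ones, so the statement being proved must be strengthened to mixed systems of double and simple points, and the base cases of the recursion (when $d_j$ drops to $1$ or a factor degenerates away) land on varieties that are genuinely defective --- two-factor Segre varieties and the quadric Veronese cases among them --- so the induction cannot close without a separate analysis excluding them. You also never address integrality: the $k$ that ``saturates the inductive bound'' is generically not an integer, which is why the differential Horace method exists; without it the hockey-stick summation does not produce the stated constant. These are not bookkeeping issues but the reason the Horace approach has only been completed for Segre--Veronese varieties in special cases. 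Second, your identifiability step misattributes the role of the hypothesis $2\sum n_i<\tfrac{d_j}{n_j+d_j}\tfrac{1}{1+\sum n_i}\prod\binom{n_i+d_i}{d_i}$: proving non-defectiveness of one further secant variety does not yield identifiability, and ruling out (tangential) weak defectiveness is not something the Horace argument delivers. In the paper this inequality is precisely the dimensional hypothesis of \cite[Theorem 3]{CM19}, which converts non-$h$-defectiveness directly into $(h-1)$-identifiability once $h$ exceeds twice the dimension of the variety; that citation (or an equivalent) is what you need, not an extra degeneration step.
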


Note that Theorem \ref{main2} gives a polynomial bound of degree $\sum_i n_i$ in the $d_i$, while in the $n_i$ we have a polynomial bound of degree $\sum_id_i-2$. For Segre-Veronese varieties, the expected generic rank is given by a polynomials of degree $\sum_i n_i$ in the $d_i$ and of degree $\sum_i d_i - 1$ in the $n_i$. At the best of our knowledge, the bound in Theorem \ref{main2} is the best general bound so far for non secant defectiveness and identifiability of Segre-Veronese varieties.  

\subsection*{Organization of the paper} The paper is organized as follows. In Section \ref{triang} we introduce a technique to study secant defectiveness based on polytope triangulations . In Section \ref{sec:2sec} we classify possibly singular $2$-secant defective toric surfaces. In this context we would like to mention that $2$-secant defective smooth toric varieties were classified in \cite{CS07}. In Section \ref{sec:boundsSV} we prove Theorem \ref{main2} and in Proposition \ref{prop:twofactors} we recover, with our techniques, a previously known classification of some special secant defective two factors Segre-Veronese varieties. In Section \ref{Magma} we discuss a Magma \cite{Magma97} implementation of our techniques. Finally, in Section \ref{sec:def} we give new examples of defective Segre-Veronese varieties.

\subsection*{Acknowledgments}
The first named author was partially supported by Proyecto FONDECYT Regular N. 1190777. The second named author is a member of the Gruppo Nazionale per le Strutture Algebriche, Geometriche e le loro Applicazioni of the Istituto Nazionale di Alta Matematica "F. Severi" (GNSAGA-INDAM).

We thank very much G. Ottaviani for suggesting us the proof of Proposition \ref{Giorgio} and for many helpful comments on a preliminary version of the paper, and J. Draisma for many useful discussions.

\section{A convex geometry translation of Terracini's lemma}\label{triang}
Let $N$ be a rank $n$ free abelian group, $M := {\rm Hom}(N,\mathbb Z)$ its dual and $M_{\mathbb Q} := M\otimes_{\mathbb Z}\mathbb Q$ the corresponding rational vector space. Let $P\subseteq M_{\mathbb Q}$ be a full-dimensional lattice polytope, that is the convex hull of finitely many points in $M$ which do not lie on a hyperplane. The polytope $P$ defines a polarized pair $(X_P,H)$ consisting of the toric variety $X_P$ together with a very ample Cartier 
divisor $H$ of $X_P$. More precisely $X_P$ is the Zariski closure of the image of 
the monomial map
$$
\begin{array}{lccc}
\phi_P: & (\mathbb C^*)^n & \longrightarrow & \mathbb{P}^{|P\cap M|-1}\\
& u & \mapsto & [\chi^m(u)\, :\, m\in P\cap M]
\end{array}
$$
where $\chi^m(u)$ denotes the Laurent monomial in the variables $(u_1,\dots,u_n)$ defined by the point $m$, and $H$ is a hyperplane section of $X_P$. The defining fan $\Sigma := \Sigma(X)\subseteq N_{\mathbb Q}$ 
of the normalization $\tilde X_P$ of $X_P$ is the normal fan of $P$ and $H = -\Sigma_{\rho\in\Sigma(1)}\min_{m\in P}\langle m,\rho\rangle D_\rho$, where each $\rho$ denotes the primitive generator of a $1$-dimensional cone of $\Sigma$ and $D_\rho$ is the corresponding torus invariant divisor. Each element $v\in N$ defines a $1$-parameter subgroup of $(\mathbb C^*)^n$ via the homomorphism $\eta_v\colon \mathbb C^*\to (\mathbb C^*)^n$ defined by $t\mapsto t^v$. We denote by $\Gamma_v\subseteq X$ the Zariski closure of the curve $(\phi_P\circ\eta_v)(\mathbb C^*)$.

Given $a\in\mathbb C^*$ denote by 
$\Gamma_v(a)$ the point $\phi_P(\eta_v(a))$, and by $m_1,\dots,m_r$ the integer points of $P\cap M$.

\begin{Lemma}
\label{lem:tang}
Given a point $a\in\mathbb C^*$ 
the tangent space of $X$ at $\Gamma_v(a)$
is the projectivization of the vector subspace 
of $\mathbb C^{|P\cap M|}$ generated by 
the rows of the following matrix
$$
 M_v(a) :=\left(
 \begin{array}{ccc}
  a^{\langle m_1,v\rangle} & \dots & a^{\langle m_r,v\rangle}\\
  \langle m_1,e_1\rangle a^{\langle m_1-e_1^*,v\rangle} 
  & \dots 
  & \langle m_r,e_1\rangle a^{\langle m_r-e_1^*,v\rangle}\\
  \vdots & & \vdots\\
  \langle m_1,e_n\rangle a^{\langle m_1-e_n^*,v\rangle} 
  & \dots 
  & \langle m_r,e_n\rangle a^{\langle m_r-e_n^*,v\rangle}
 \end{array}\right)
$$
\end{Lemma}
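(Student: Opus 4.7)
The plan is to reduce the computation of the embedded projective tangent space $T_{\Gamma_v(a)} X \subseteq \mathbb{P}^{|P\cap M|-1}$ to a Jacobian computation for the monomial parametrization. First I would introduce the affine lift
$$
\Phi \colon (\mathbb{C}^*)^n \longrightarrow \mathbb{C}^r, \qquad u \mapsto \bigl(\chi^{m_1}(u),\ldots,\chi^{m_r}(u)\bigr),
$$
where $r = |P\cap M|$, so that $\phi_P$ is the projectivization of $\Phi$. Since $\phi_P((\mathbb{C}^*)^n)$ is the dense torus orbit of $X$, it is open and smooth in $X$; hence $\Gamma_v(a) = \phi_P(\eta_v(a))$ is a smooth point of $X$, and its embedded projective tangent space is the projectivization of the affine tangent space to the cone $C(X) \subseteq \mathbb{C}^r$ at the lifted point $\tilde p := \Phi(\eta_v(a))$. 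That affine tangent space is in turn spanned by $\tilde p$ itself (coming from the line $\mathbb{C}\cdot\tilde p$ lying in $C(X)$) together with the image of the differential $d\Phi_{\eta_v(a)}$.

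Next, I would read off the entries of $M_v(a)$ from this description. Differentiating a Laurent monomial gives
$$
\frac{\partial}{\partial u_k}\, \chi^{m_i}(u) \;=\; \langle m_i, e_k \rangle\, \chi^{m_i - e_k^*}(u),
$$
and evaluation at $u = \eta_v(a) = (a^{v_1},\ldots, a^{v_n})$ uses the identity $\chi^m(\eta_v(a)) = a^{\langle m, v \rangle}$ for any $m\in M$. Thus the first row of $M_v(a)$ records the coordinates of $\tilde p$, while row $k+1$ records $\partial \Phi/\partial u_k$ at $\eta_v(a)$. Projectivizing the row span yields exactly the asserted tangent space.

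The only mildly delicate point, and the main thing to justify carefully, is the identification of the embedded projective tangent space at $\Gamma_v(a)$ with the projectivization of the span of $\tilde p$ together with the image of $d\Phi_{\eta_v(a)}$. This is the standard formula for the tangent space of a parametrized projective variety at a smooth image point, and applies here thanks to the smoothness of the dense torus orbit in $X$; in particular, no passage to the normalization $\tilde X_P$ is required, since we work intrinsically on the torus orbit, which already sits inside $X$.
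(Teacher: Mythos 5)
Your argument is correct and follows essentially the same route as the paper: both compute the tangent space from the monomial parametrization, reading the first row of $M_v(a)$ as the (lifted) point itself and row $k+1$ as the partial derivative $\partial_{u_k}\chi^{m_i}$ evaluated at $a^v$. The one point you flag as delicate --- that the image of the differential actually fills the tangent space, which requires submersivity of the parametrization and not merely smoothness of the image point --- is settled in the paper by noting that $\phi_P$ is finite and \'etale because it is torus-equivariant; this is the same homogeneity argument implicit in your restriction to the dense orbit.
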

\begin{proof}
The point $\Gamma_v(a)$ is in the 
image of $\phi_P$, so that we can 
use this parametrization to compute
the tangent space. Observe that since
$P$ is full-dimensional, the map $\phi_P$
is finite, moreover it is \'etale being 
equivariant with respect to the torus
action. It follows that $\phi_P$ is
smooth and thus the tangent space
of $X$ at $\Gamma_v(a)$ is spanned
by the partial derivatives of order less than or equal to one
of the monomials $\chi^{m_1}(u),\dots,\chi^{m_r}(u)$
evaluated at $a^v$.
\end{proof}

\begin{Remark}
Given a subset $\Delta := \{m_{i_0},\dots,m_{i_n}\}$
of cardinality $n+1$ of $P\cap M$ the 
corresponding $(n+1)\times (n+1)$ minor 
of the matrix $M_v(a)$, whenever $a\neq 0$, is 
\[
 \delta_{v,\Delta}(a) := 
 \frac{a^{\langle m_{i_0}+\dots+m_{i_n},v\rangle}}
 {a^{\langle e_1^*+\dots+e_{n}^*,v\rangle}}
 \left|
 \begin{matrix}
  1 & \dots & 1\\
  \langle m_{i_0},e_1\rangle 
  & \dots 
  & \langle m_{i_n},e_1\rangle \\
  \vdots & & \vdots\\
  \langle m_{i_0},e_n\rangle
  & \dots 
  & \langle m_{i_n},e_n\rangle
 \end{matrix}
 \right|
\]
Observe that $\delta_{v,\Delta}(a)$ is 
non-zero exactly when the points
of $\Delta$ do not lie on a hyperplane.
\end{Remark}
Our strategy now is to consider 
vectors $v_1,\dots,v_k\in N$, not
necessarily primitive, and study when
the linear span $\Lambda_{v_1,\dots,v_k}(a)$
of the tangent spaces
of $X$ at the points $\Gamma_{v_1}(a),
\dots,\Gamma_{v_k}(a)$ has the expected 
dimension. By Lemma~\ref{lem:tang}
the space $\Lambda_{v_1,\dots,v_k}(a)$
is the linear span of the vertical join 
$M_{v_1,\dots,v_k}(a)$ of the matrices 
$M_{v_1}(a),\dots,M_{v_k}(a)$.
Given a set $\Delta$
of cardinality $n+1$ we denote by
\stepcounter{thm}
\begin{equation}\label{bary}
b(\Delta)
 := 
 \frac{1}{n+1}\sum_{m\in\Delta}m
\end{equation}
its barycenter.

We will need the following result \cite{Ja08}. Given $K,L\subseteq [n]=\{1,2,\dots,n\}$ and a $n\times n$ matrix $A$ we denote by $A_{K,L} $ the determinant of the submatrix obtained from $A$ whose rows and columns are indexed by the set $K$ and $L$ respectively. 
\begin{Proposition}[Laplace's generalized expansion for the determinant]\label{Laplace}
Let $A$ be an $n\times n$ matrix, $m<n$ a positive integer and fix a set of rows $J=\{j_1<\dots <j_m\}$. Then
$$\det(A)=\sum_{I=\{i_1< \dots < i_m\}\subseteq [n]} (-1)^{i_1+\dots+i_m+j_1+\dots+ j_m} A_{J,I}A_{J',I'}$$
where $I'=[n]\setminus I$ and $J'=[n] \setminus J$. 
\end{Proposition}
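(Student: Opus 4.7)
The plan is to derive the identity from the Leibniz formula $\det(A) = \sum_{\sigma \in S_n} \mathrm{sgn}(\sigma) \prod_{i=1}^n a_{i,\sigma(i)}$ by partitioning the symmetric group $S_n$ according to the image $I := \sigma(J)$. For each fixed $m$-subset $I \subseteq [n]$, the restriction map $\sigma \mapsto (\sigma|_J, \sigma|_{J'})$ sets up a bijection between $\{\sigma \in S_n : \sigma(J) = I\}$ and pairs of bijections $\tau \colon J \to I$ and $\tau' \colon J' \to I'$. Under this decomposition the Leibniz monomial factors as $\prod_{j\in J} a_{j,\tau(j)} \cdot \prod_{j\in J'} a_{j,\tau'(j)}$, so once the signs are sorted out the inner sums over $\tau$ and $\tau'$ will assemble into the product of minors $A_{J,I}\,A_{J',I'}$.

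The substantive step is the sign computation: I need to show $\mathrm{sgn}(\sigma) = \mathrm{sgn}(\tau)\,\mathrm{sgn}(\tau')\,(-1)^{j_1+\dots+j_m+i_1+\dots+i_m}$. I would single out the canonical shuffle $\sigma_0$ sending $j_k \mapsto i_k$ for each $k$ and mapping $J'$ to $I'$ order-preservingly; a direct inversion count yields $\mathrm{sgn}(\sigma_0) = (-1)^{\sum_{k=1}^m(j_k-k)+\sum_{k=1}^m(i_k-k)} = (-1)^{\sum j_k + \sum i_k}$, since the extra $-2\sum_{k=1}^m k$ in the exponent is even. Any other $\sigma$ with $\sigma(J) = I$ differs from $\sigma_0$ by a permutation that preserves the partition $I \sqcup I'$ and acts as $\tau \circ (\sigma_0|_J)^{-1}$ and $\tau' \circ (\sigma_0|_{J'})^{-1}$ on the two blocks, contributing exactly $\mathrm{sgn}(\tau)\,\mathrm{sgn}(\tau')$.

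Combining these pieces, the Leibniz sum separates as
$$\det(A) = \sum_I (-1)^{\sum j_k + \sum i_k}\Bigl(\sum_\tau \mathrm{sgn}(\tau)\prod_{j\in J}a_{j,\tau(j)}\Bigr)\Bigl(\sum_{\tau'}\mathrm{sgn}(\tau')\prod_{j\in J'}a_{j,\tau'(j)}\Bigr),$$
and the two bracketed factors are $A_{J,I}$ and $A_{J',I'}$ respectively, again by the Leibniz formula applied to the submatrices. The main obstacle is the shuffle-sign bookkeeping in the middle paragraph; everything else is formal manipulation. An equally viable alternative is induction on $m$, where the base case $m=1$ is the ordinary single-row cofactor expansion and the inductive step reduces $m+1$ to $m$ by peeling off one row of $J$ and regrouping signs.
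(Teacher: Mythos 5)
Your proof is correct and complete. Note, however, that the paper does not prove this proposition at all: it is quoted as a known classical result with a citation to [Ja08], so there is no internal argument to compare yours against. What you give is the standard derivation from the Leibniz formula, and all the delicate points are handled properly: the partition of $S_n$ by $I=\sigma(J)$, the factorization of the Leibniz monomial, and above all the sign bookkeeping. The one place where a reader must be careful is that $\mathrm{sgn}(\tau)$ for a bijection $\tau\colon J\to I$ between two different ordered sets is only defined relative to the order-preserving identifications of $J$ and $I$ with $[m]$; your device of factoring every $\sigma$ with $\sigma(J)=I$ through the canonical shuffle $\sigma_0$ (order-preserving on both blocks) resolves exactly this, and the inversion count $\mathrm{sgn}(\sigma_0)=(-1)^{\sum_k(j_k-k)+\sum_k(i_k-k)}=(-1)^{\sum j_k+\sum i_k}$ is the standard and correct one (e.g.\ via writing $\sigma_0$ as a composition of the two unshuffles associated to $J$ and $I$). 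The remaining steps — that $\rho=\sigma\sigma_0^{-1}$ preserves the partition $I\sqcup I'$ and contributes $\mathrm{sgn}(\tau)\,\mathrm{sgn}(\tau')$, and that the two inner sums reassemble into the minors $A_{J,I}$ and $A_{J',I'}$ — are routine and correctly stated. Your suggested alternative by induction on $m$ from the single-row cofactor expansion would also work, but the direct Leibniz argument you wrote out is cleaner.
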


The following is the main technical tool in our strategy.

\begin{Proposition}\label{pro:rank}
Let $S$ be a subset of $P\cap M$ and assume the
following.
\begin{enumerate}
\item
There are disjoint subsets $S_1,\dots, S_k$
of $S$ of cardinality $n+1$ each of 
which is not contained in a hyperplane.
\item
There are $v_1,\dots,v_k\in N$ such that
for each $1\leq i\leq k$ and each subset $\Delta$ not contained in a hyperplane
of cardinality $n+1$ of $S\:\setminus
\: S_1\cup\dots\cup S_{i-1}$, the value
$\langle b(\Delta),v_i\rangle$
attains its maximum exactly 
at $\Delta = S_i$.
\end{enumerate}
Then, up to a rescaling of the $v_i$ if needed, the matrix $M_{v_1,\dots,v_k}(a)$
has maximal rank $(n+1)k$ for any $a$ big enough.

Moreover, if in addition $S=P\cap M$ and $P\cap M \:\setminus \:S_1\cup\dots\cup S_{k}$ is affinely independent then the matrix $M_{v_1,\dots,v_k,v_{k+1}}(a)$
has maximal rank $|P\cap M|$ for any $a$ big enough and any vector $v_{k+1}\neq 0$.
\end{Proposition}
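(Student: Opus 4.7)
The strategy is to exhibit a square submatrix of the required size whose determinant, viewed as a Laurent polynomial in $a$, has a unique top-degree monomial with nonzero coefficient after rescaling the $v_i$'s; the matrix then has full rank for all $a$ large enough.

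For the first claim, I would take the $(n+1)k\times(n+1)k$ submatrix of $M_{v_1,\ldots,v_k}(a)$ with columns indexed by $S_1\cup\cdots\cup S_k$. Iterated application of Proposition~\ref{Laplace} to the $k$ horizontal blocks of $n+1$ rows expresses its determinant as a signed sum
\[
\sum_{\pi}\varepsilon_{\pi}\prod_{i=1}^{k}\delta_{v_i,\Delta_i^{\pi}}(a),
\]
indexed by ordered partitions $\pi=(\Delta_1^{\pi},\ldots,\Delta_k^{\pi})$ of $S_1\cup\cdots\cup S_k$ into $(n+1)$-subsets. By the Remark following Lemma~\ref{lem:tang}, each factor is zero when $\Delta_i^{\pi}$ lies on a hyperplane and otherwise equals a nonzero constant times $a^{(n+1)\langle b(\Delta_i^{\pi}),v_i\rangle-\langle e_1^*+\cdots+e_n^*,v_i\rangle}$; the shift $\langle e_1^*+\cdots+e_n^*,v_i\rangle$ does not depend on $\pi$. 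I would then rescale $v_i\mapsto c_iv_i$ with $c_1\gg c_2\gg\cdots\gg c_k>0$, so that the $a$-exponent of each $\pi$-term is lexicographically ordered in $i$. Iterating hypothesis~(2) forces $\Delta_1^{\pi}=S_1$, then $\Delta_2^{\pi}=S_2$, and so on as the unique maximizer; its coefficient is a product of nonzero minors (by hypothesis~(1)), so the submatrix is nonsingular for $a$ large.

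For the ``moreover'' statement, set $T:=P\cap M\setminus(S_1\cup\cdots\cup S_k)$ and $r:=|T|$. I would enlarge the previous submatrix to a square $|P\cap M|\times|P\cap M|$ matrix by appending $r$ suitably chosen rows from the $v_{k+1}$-block of $M_{v_1,\ldots,v_{k+1}}(a)$. Pulling $a^{\langle m,v_{k+1}\rangle}$ out of each column $m\in T$ and an appropriate power of $a$ out of each nonzero row, one sees that the restriction of this block to columns $T$ has the same rank as the matrix whose columns are the affine vectors $(1,m)$ for $m\in T$, which equals $r$ by affine independence of $T$; so $r$ rows can be chosen making the corresponding $r\times r$ minor nonzero. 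A second Laplace expansion along the first $(n+1)k$ rows then expresses the full determinant as a signed sum over partitions $(\Delta_1,\ldots,\Delta_k,T')$ of $P\cap M$ into pieces of sizes $n+1,\ldots,n+1,r$; the same lexicographic argument forces $(\Delta_1,\ldots,\Delta_k)=(S_1,\ldots,S_k)$ and hence $T'=T$, producing a unique dominant term with nonzero coefficient.

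The main obstacle I foresee is ensuring that the combinatorial optimum dictated by hypothesis~(2) is strictly achieved by the expected partition, so that no cancellation among competing terms in either Laplace expansion can destroy the leading $a$-power. This is handled by the lexicographic rescaling, which reduces the joint optimization over admissible partitions to the sequential maximizations assumed in~(2); the strict maximality in~(2) then separates $a$-exponents and guarantees that the dominant term survives. The remaining work is bookkeeping of signs and of the partition-independent normalizing factors in the two expansions.
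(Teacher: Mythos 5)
Your proposal is correct and follows essentially the same route as the paper: the same iterated Laplace expansion over partitions into $(n+1)$-subsets, the same identification of each term's $a$-degree with $(n+1)\sum_i\langle b(\Delta_i),v_i\rangle$ up to a partition-independent shift, the same lexicographic rescaling of the $v_i$ combined with hypothesis (2) to isolate the term $(S_1,\dots,S_k)$, and the same use of affine independence to extract $r-(n+1)k$ usable rows from the $v_{k+1}$-block. The only cosmetic difference is that the paper removes the shift $\langle e_1^*+\cdots+e_n^*,v_i\rangle$ by rescaling rows in advance, whereas you simply observe it is independent of the partition.
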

\begin{proof}
First of all observe that the rank 
of $M_{v_1,\dots,v_k}(a)$ does not
change if we multiply one of its rows
by a non zero constant. We apply this
modification to the matrix by multiplying
the $(i+1)$-th row of $M_{v}(a)$ by
$a^{\langle e_i^{*},v\rangle}$ for $i=1,\dots,n$. In this way
for each subset $\Delta:= \{m_{i_0},\dots,m_{i_n}\}
\subseteq S$ of cardinality $n+1$ the minor 
$\delta_{v,\Delta}(a)$ becomes
\[
 \widetilde\delta_{v,\Delta}(a) := 
  a^{(n+1)\langle b(\Delta),v\rangle}
 \left|
 \begin{matrix}
  1 & \dots & 1\\
  \langle m_{i_0},e_1\rangle 
  & \dots 
  & \langle m_{i_n},e_1\rangle \\
  \vdots & & \vdots\\
  \langle m_{i_0},e_n\rangle
  & \dots 
  & \langle m_{i_n},e_n\rangle
 \end{matrix}
 \right|
\]
Let $\widetilde M_{v_1,\dots,v_k}(a)$ be the
modified matrix and let $\widetilde M$ be 
the $(n+1)\times k$ square submatrix
whose columns correspond to the points 
of the set $S$.

We denote by $\mathcal P(n+1,k)$
the set of partitions of $S$ into $k$
disjoint subsets of cardinality $n+1$.
The determinant of $\widetilde M$ is a Laurent
polynomial in $\mathbb C[a^{\pm 1}]$ with exponents given by sums of $k$ terms of the form $(n+1)\left\langle b(\Delta),v_i\right\rangle$. 
Applying the Laplace's expansion in Proposition \ref{Laplace} several times we can write this determinant as follows:
$$ 
 \mbox{det}(\widetilde M)
 = 
 \sum_{(I_1,\dots,I_k)\in\mathcal P(n+1,k)} 
 \mbox{sign} (I_1,\dots,I_k) M_{I_1}M_{I_2}\cdots M_{I_k}
$$
where $$\mbox{sign} (I_1,\dots,I_k)=(-1)^{1+2+\dots+(k-1)(n+1)+\sum_{j\in I_1\cup \dots \cup I_{k-1}}{j}}$$ and
$M_{I_j}$ is the determinant
of the $(n+1)\times(n+1)$ submatrix
of $\widetilde M$ whose columns and rows are labeled respectively by $I_j$ and $\{(j-1)(n+1)+1,\dots, j(n+1)\}$.

By the first assumption in the hypothesis,
one of its terms is the non zero product
\[
 \widetilde\delta_{v_1,S_1}(a)\cdots\widetilde\delta_{v_k,S_k}(a)
\]
Moreover, observe that each term of the determinant
has the above form for some partition of
$S$ into $k$ disjoint subsets of cardinality
$n+1$.
We will show that, up to rescaling
the vectors $v_1,\dots,v_k$, the above 
product is the leading term of the determinant
and thus the matrix has maximal rank.
By the second assumption in the hypothesis
the degree of $\widetilde\delta_{v_1,S_1}(a)$
is bigger than the degree of 
$\widetilde\delta_{v_1,\Delta}(a)$
for any $\Delta\neq S_1$. Multiplying 
$v_1$ by a positive integer we can
also assume that the degree of 
$\widetilde\delta_{v_1,S_1}(a)$
is bigger than the degree of 
$\widetilde\delta_{v_j,\Delta}(a)$
for any $j>1$ and any $\Delta\subseteq S$
of cardinality $n+1$.
In a similar way one proves inductively 
that, up to re-scaling $v_i$, the following 
inequalities hold
\[
 \deg\widetilde\delta_{v_i,S_i}(a)
 >
 \begin{cases}
  \deg\widetilde\delta_{v_i,\Delta}(a) 
  & \text{for any $\Delta\subseteq  
  S\setminus S_1\cup\dots\cup S_{i-1}$} \text{ (by hypothesis (2))}\\
  \deg\widetilde\delta_{v_j,\Delta}(a)
  &\text{for any $j>i$ and any $\Delta\subseteq S$ (taking a big enough multiple of $v_i$)}
 \end{cases}
\]
Note that we can choose the $v_i$ all distinct.
The statement now follows by comparing
the degree of $\widetilde\delta_{v_1,S_1}(a)
\cdots\widetilde\delta_{v_k,S_k}(a)$ with the
degree of any other term of the determinant 
coming from a different partition of $S$.

Finally, if in addition $S=P\cap M$ and $P\cap M\:\setminus \:S_1\cup\dots\cup S_{k}$ is affinely independent the matrix $M_{v_1,\dots,v_k,v_{k+1}}(a)$
has rank at most $r$ for any $a\neq 0$ and any vector $v_{k+1}\neq 0$ since this is the dimension of the subspace spanned by its rows. Now, consider $S_{k+1}:=P\cap M\:\setminus \:S_1\cup\dots\cup S_{k}=\{ m_{j_1},\dots,m_{j_s}\}$ and $s=r-(n+1)k.$ Since $S_{k+1}$ is affinely independent there are $k_1,\dots,k_{s-1}$ such that the $s\times s$ matrix
$$
N= \left(
 \begin{array}{ccc}
  1 & \dots & 1\\
  \langle m_{j_1},e_{k_1}\rangle 
  & \dots 
  & \langle m_{j_s},e_{k_1}\rangle \\
  \vdots & & \vdots\\
  \langle m_{j_1},e_{k_{s-1}}\rangle
  & \dots 
  & \langle m_{j_s},e_{k_{s-1}}\rangle
 \end{array}\right)
$$
has rank $s$. Consider the submatrix
$$
 N_{v_{k+1}}(a) :=\left(
 \begin{array}{ccc}
  a^{\langle m_{j_1},v\rangle} & \dots & a^{\langle m_{j_s},v\rangle}\\
  \langle m_{j_1},e_{k_1}\rangle a^{\langle m_{j_1}-e_{k_1}^*,v\rangle} 
  & \dots 
  & \langle m_{j_s},e_{k_1}\rangle a^{\langle m_{j_s}-e_{k_1}^*,v\rangle}\\
  \vdots & & \vdots\\
  \langle m_{j_1},e_{k_{s-1}}\rangle a^{\langle m_{j_1}-e_{k_{s-1}}^*,v\rangle} 
  & \dots 
  & \langle m_{j_s},e_{k_{s-1}}\rangle a^{\langle m_{j_s}-e_{k_{s-1}}^*,v\rangle}
 \end{array}\right)
$$
of $M_{v_{k+1}}(a)$ obtained from $M_{v_{k+1}}(a)$ taking only the rows  $1,k_1+1,\dots, k_{s-1}+1$. Now, we repeat this construction using $N_{v_{k+1}}(a)$ instead of $M_{v_{k+1}}(a)$ and obtain a $r\times r$ matrix with non zero determinant. Since it is a submatrix of $M_{v_1,\dots,v_k,v_{k+1}}(a)$ we conclude that $M_{v_1,\dots,v_k,v_{k+1}}(a)$ has rank $r$.
\end{proof}

The following is inspired by Proposition \ref{pro:rank}.

\begin{Definition}\label{def:sep}
We say that $\Delta\subseteq M$ is a simplex if $\Delta$ contains $n+1$ integer points and it is not contained in an affine hyperplane. For any vector $v\in N$ consider the linear form $\varphi_v:M\to\mathbb{R}$ given by $\varphi_v(p)=\langle p,v\rangle$. We will write 
$$\varphi_v(\Delta)=\frac{1}{n+1}\sum_{p\in \Delta}\varphi_v(p)$$
We say that $v$ separates the simplex $\Delta$ in a subset $S\subseteq M$ if 
$$\max \{\varphi_{v}(T); T\subseteq S \: | \: T \: \mbox{is a simplex} \} = \varphi_v(\Delta)$$
and the maximum is attained only at $\Delta$.
\end{Definition}

\begin{Remark}\label{remark:equalvs}
Let $P\subseteq M_{\mathbb{Q}}$ be a full-dimensional lattice polytope, $\Delta_1,\dots,\Delta_k$ disjoint simplexes contained in $P\cap M$ and $v_1,\dots,v_k$ vectors in $N$. Assume that $v_i$ separates $\Delta_i$ in $\Delta_i\cup \dots \cup \Delta_k$ for any $i=1\dots k$. 
Since the maximal in \ref{def:sep} is strict, if we take vectors $w_1,\dots,w_k$ in $N$ close enough to the $v_i$, then $w_i$ separates $\Delta_i$ in $\Delta_i\cup \dots \cup \Delta_k$ for any $i=1\dots k$. Therefore, we may assume without loss of generality that the $v_i$ are distinct. Observe that if $v_i$ separates $\Delta_i$ in $\Delta_i\cup \dots \cup \Delta_k$ for any $i=1\dots k$, then any multiple of the $v_i$ will do so as well. 
\end{Remark}

As a consequence of Proposition \ref{pro:rank} we get the following criterion for non secant defectiveness of toric varieties.

\begin{thm}\label{teo:main}
Let $P\subseteq M_{\mathbb{Q}}$ be a full-dimensional lattice polytope, 
$X_P$ the corresponding toric variety, $\Delta_1,\dots,\Delta_k$ disjoint simplexes contained in $P\cap M$ and $v_1,\dots,v_k$ vectors in $N$. If $v_i$ separates $\Delta_i$ in $\Delta_i\cup \dots \cup \Delta_k$ for any $i=1\dots k$ then $X_P$ is not $k$-defective. Moreover, if $(P\cap M) \:\setminus\: \Delta_1,\dots,\Delta_k$ is affinely independent then $X_P$ is not defective. In particular, if $P\cap M = \Delta_1\cup \dots \cup \Delta_k$ then $X_P$ is not defective.
\end{thm}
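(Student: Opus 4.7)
The proof combines Terracini's lemma with Proposition~\ref{pro:rank}. Terracini's lemma says that $X_P$ fails to be $h$-defective if and only if, for $h$ general points $p_1,\dots,p_h \in X_P$, the projective span $\langle T_{p_1}X_P,\dots,T_{p_h}X_P\rangle$ has the expected dimension $\min\{h(n+1)-1,N\}$, where $N = |P\cap M|-1$. Because $X_P$ contains a dense open torus orbit, a generic $h$-tuple of points can be realized as $(\Gamma_{v_1}(a),\dots,\Gamma_{v_h}(a))$ for suitable $v_i \in N$ and $a \in \C^*$, and by Lemma~\ref{lem:tang} the span of the corresponding tangent spaces is the projectivization of the row space of the joined matrix $M_{v_1,\dots,v_h}(a)$. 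So the task reduces to producing $v_i$'s for which this matrix attains maximal rank.

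Next I would identify the separation hypothesis of the theorem with condition~(2) of Proposition~\ref{pro:rank}. Setting $S := \Delta_1\cup\cdots\cup\Delta_k$ and $S_i := \Delta_i$, condition~(1) is built into the definition of simplex, while the equality $\varphi_v(\Delta) = \langle b(\Delta), v\rangle$ (up to the irrelevant factor $n+1$) turns the assumption that $v_i$ separates $\Delta_i$ in $\Delta_i\cup\cdots\cup\Delta_k$ into exactly condition~(2). The first assertion of Proposition~\ref{pro:rank}, applied after rescaling the $v_i$ as necessary, then yields that $M_{v_1,\dots,v_k}(a)$ has maximal rank $(n+1)k$ for $a$ large enough, so the $k$ tangent spaces span a projective subspace of the expected dimension and $X_P$ is not $k$-defective.

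For the second assertion I would first observe that the separation hypothesis for $\Delta_1,\dots,\Delta_k$ automatically restricts to a separation hypothesis for $\Delta_1,\dots,\Delta_h$ for each $h\le k$, since a strict maximum over a larger set remains a strict maximum over any subset containing the maximizer; the first assertion therefore already gives non $h$-defectiveness for every $h\le k$. If in addition $(P\cap M)\setminus(\Delta_1\cup\cdots\cup\Delta_k)$ is affinely independent, the second assertion of Proposition~\ref{pro:rank} supplies, for any nonzero $v_{k+1}$, a matrix $M_{v_1,\dots,v_k,v_{k+1}}(a)$ of full rank $|P\cap M|$, which means $\Sec_{k+1}(X_P) = \P^N$ and non $h$-defectiveness also for all $h\ge k+1$. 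The third assertion is the special case where the complement is empty, hence vacuously affinely independent. No substantial obstacle arises: the real work has been carried out in Proposition~\ref{pro:rank}, and what remains is the bookkeeping of matching the ``separates'' condition of Definition~\ref{def:sep} with the barycenter condition~(2) of that proposition and of passing from $k$ simplices to $k+1$ via the affine independence of the leftover points.
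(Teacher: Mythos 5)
Your proof follows the paper's argument exactly: reduce to Proposition \ref{pro:rank} by matching the separation hypothesis of Definition \ref{def:sep} with condition (2) there (condition (1) being the definition of simplex), then conclude via Terracini's lemma; your extra bookkeeping for $h\le k$ (restriction of a strict maximum) and $h\ge k+1$ (the $(k+1)$-secant fills $\mathbb{P}^N$) is correct and slightly more explicit than the paper's one-line "use the second part of Proposition \ref{pro:rank}." One justification is misstated, however: a generic $h$-tuple of points of $X_P$ is \emph{not} of the form $(\Gamma_{v_1}(a),\dots,\Gamma_{v_h}(a))$ --- as $(v_1,\dots,v_h)$ ranges over $N^h$ and $a$ over $\mathbb{C}^*$ these tuples sweep out only a countable union of curves in $X_P^{h}$, so density of the torus orbit does not give you a general tuple of this shape. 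The correct (and standard) link is semicontinuity: the dimension of the span of the $h$ tangent spaces is a lower semicontinuous function of the tuple of smooth points, so exhibiting a single tuple where it attains $\min\{h(n+1),|P\cap M|\}$ forces the same at a general tuple, which is exactly what Terracini's lemma requires. With that one sentence repaired, your argument coincides with the paper's.
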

\begin{proof}
Without loss of generality we can assume that $P$ is contained in the positive quadrant and contains the origin. Applying Proposition \ref{pro:rank} with $S = \Delta_1\cup\dots\cup \Delta_k$ we get that $M_{v_1,\dots,v_k}(a)$ has maximal rank for any $a$ big enough and the $v_i$ are distinct, taking multiples if necessary. Take any $a$ big enough, then the tangent spaces of $X_P$ at the points $\Gamma_{v_1}(a),\dots, \Gamma_{v_k}(a)$ are in general position. By Terracini's lemma \cite{Te11} we conclude that $X_P$ is not $k$-defective. For the second statement just use second part of Proposition~\ref{pro:rank}.
\end{proof}

Theorem \ref{teo:main} in principle can be applied to any toric variety, in particular to Segre-Veronese varieties, one just need to describe the vectors $v_1,\dots, v_k$. Due to its recursive nature Theorem \ref{teo:main} can be algorithmically implemented, we refer to Section \ref{Magma} for a Magma implementation. 

For instance by Theorem \ref{teo:main} we have that $SV^{(1,1)}_{(5,3)}$ is not defective.

Now, we prove two technical lemmas  in order to get a general bound for non secant defectiveness of toric varieties from Theorem \ref{teo:main}. In Section \ref{sec:boundsSV} we will specialize this bound to Segre-Veronese varieties.

\begin{Definition}
Given a finite subset $S\subseteq M$,
the {\em barycentric polytope} of $S$,
denoted by $B(S)\subseteq M_{\mathbb Q}$,
is the convex hull of all the points 
$b(\Delta)$, where $\Delta$ varies
among all the subsets of $S$
of cardinality $n+1$ which are not
contained in a hyperplane and $b(\Delta)$ is as in (\ref{bary}).
\end{Definition}

\begin{Example} 
Consider 
$$S=\{A=(0,0),B=(1,0),C=(2,0),D=(1,1),E=(2,1)\}$$
as in the picture below. We have nine possible ways to form simplexes $\Delta \subseteq S$
and the barycentric polytope $B(S)$ is a trapezoid.  In the picture we draw circles in the barycenters of simplexes $\Delta$ with $D,E\in \Delta$; we draw $+$ on barycenters of simplexes with $E\in \Delta$ but $D\notin \Delta$, and finally we draw $\times$ in barycenters of simplexes with $D\in \Delta$ but $E\notin \Delta$. 
$$
\begin{tikzpicture}[xscale=0.6,yscale=0.6]
\node[black] (A) at (0,0) {$\bullet$};
\node[black] (B) at (3,0) {$\bullet$};
\node[black] (C) at (6,0) {$\bullet$};
\node[black] (D) at (3,3) {$\bullet$};
\node[black] (E) at (6,3) {$\bullet$};
\foreach \Point in {(3,2),(4,2),(5,2)}{
    \node at \Point {$\circ$};}    
\foreach \Point in {(3,1),(4,1),(5,1)}{
    \node at \Point {$+$};}    
\foreach \Point in {(2,1),(3,1),(4,1)}{
    \node[thick] at \Point {$\times$};} 
\draw (A) node[left] {$A$};
\draw (B) node[left] {$B$};
\draw (C) node[right] {$C$};
\draw (D) node[left] {$D$};
\draw (E) node[right] {$E$};        
\end{tikzpicture}
$$
There are exactly two shared barycenters, corresponding to the pairs of simplexes 
\[
\Delta=\{A,D,C\} \mbox{ and } \Delta':= \{A,B,E\}
\]
\[
\Delta=\{B,C,D\} \mbox{ and } \Delta':= \{A,C,E\}
\]
Note that neither of these shared barycenters are vertexes of $B(S)$. The next lemma shows that this is always the case.
\end{Example}

\begin{Lemma}
\label{lem:uniq}
Let $\Delta,\Delta'$ be two simplexes in 
$S\subseteq M$. If $b(\Delta) = b(\Delta')$ then
$b(\Delta)$ is not a vertex of $B(S)$.
\end{Lemma}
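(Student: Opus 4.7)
The plan is to exhibit $b(\Delta)$ as a non-trivial convex combination of barycenters of other simplexes in $S$, which forces it not to be an extreme point of $B(S)$, hence not a vertex. I set $A := \Delta\cap\Delta'$, $C := \Delta\setminus\Delta' = \{a_1,\dots,a_s\}$ and $C' := \Delta'\setminus\Delta = \{b_1,\dots,b_s\}$. The hypothesis $b(\Delta)=b(\Delta')$ rewrites as $\sum_i a_i = \sum_j b_j$. Note that $s\geq 2$: $s=0$ gives $\Delta=\Delta'$, while $s=1$ would force $a_1=b_1$, contradicting $C\cap C'=\emptyset$.

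Next, for any permutation $\pi$ of $\{1,\dots,s\}$ and any $i$ I consider the swap
$$\Delta_i^\pi := (\Delta\setminus\{a_i\})\cup\{b_{\pi(i)}\}.$$
A direct calculation gives $b(\Delta_i^\pi) = b(\Delta) + (b_{\pi(i)}-a_i)/(n+1)$, and summing over $i$ together with $\sum a_i = \sum b_j$ yields $\tfrac{1}{s}\sum_{i=1}^s b(\Delta_i^\pi) = b(\Delta)$. Since $b_{\pi(i)}\neq a_i$ (because $C\cap C'=\emptyset$), each $b(\Delta_i^\pi)$ is distinct from $b(\Delta)$, and the collection cannot be constant, for its common value would then equal the average $b(\Delta)$. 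Therefore, once I know that each $\Delta_i^\pi$ is a simplex, the equality above displays $b(\Delta)$ as a non-trivial convex combination of points of $B(S)$ different from itself, and we are done.

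The key obstacle is thus to choose $\pi$ so that every $\Delta_i^\pi$ is a simplex. Let $H_i := \mathrm{aff}(\Delta\setminus\{a_i\})$, an affine hyperplane; then $\Delta_i^\pi$ is a simplex iff $b_{\pi(i)}\notin H_i$. I build the bipartite graph on two copies of $\{1,\dots,s\}$ with an edge $(i,j)$ whenever $b_j\notin H_i$, and seek a perfect matching via Hall's theorem. For $I\subseteq\{1,\dots,s\}$, expressing arbitrary points in affine coordinates with respect to $\Delta$ shows that $\bigcap_{i\in I}H_i = \mathrm{aff}(\Delta\setminus\{a_i:i\in I\})$, an affine subspace of dimension $n-|I|$ containing $A$. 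Any $b_j$ lying in this subspace, together with $A$, forms an affinely independent subset of $\Delta'$ contained in that $(n-|I|)$-dimensional affine space, so $|\{j:b_j\in\bigcap_{i\in I}H_i\}|\leq (n-|I|)+1-|A| = s-|I|$; this is precisely the dual form of Hall's condition, producing the required $\pi$ and closing the argument.
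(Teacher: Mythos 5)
Your proof is correct, but it takes a genuinely different route from the paper's. The paper performs a single exchange in each direction: it finds one \emph{good pair} $(p_i,p_j')$ for which both $(\Delta\setminus\{p_i\})\cup\{p_j'\}$ and $(\Delta'\setminus\{p_j'\})\cup\{p_i\}$ are simplexes, and then $b(\Delta)=b(\Delta')$ is the midpoint of the two new barycenters; the existence of a good pair is proved by a short contradiction argument (fixing $p_1\notin\Delta'$, if every pair $(p_1,p_i')$ were bad one partitions the indices into $I\cup J$ and deduces $p_1\in\bigcap_{j\in J}\Lambda_j'=\langle p_i':i\in I\rangle\subseteq\Lambda_1$, which is absurd). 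You instead keep all exchanges inside $\Delta$, replacing each $a_i\in\Delta\setminus\Delta'$ by $b_{\pi(i)}\in\Delta'\setminus\Delta$ for a suitable permutation $\pi$, and exhibit $b(\Delta)$ as the average of the $s$ resulting barycenters; the existence of $\pi$ is a perfect matching obtained from Hall's theorem, whose hypothesis you verify using the same linear-algebra fact that underlies the paper's contradiction, namely that $\bigcap_{i\in I}H_i$ is the affine span of the complementary vertex set of the simplex, so that at most $s-|I|$ of the $b_j$ can lie in it. The paper's argument is shorter and needs only a two-term convex combination with no matching theory; yours is somewhat heavier (it invokes Hall's theorem and requires the observation $s\geq 2$, which you correctly justify via $\sum_i a_i=\sum_j b_j$) but is more symmetric and yields the slightly stronger conclusion that $b(\Delta)$ is the barycenter of $s$ single-swap modifications of $\Delta$ alone, without touching $\Delta'$. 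Both arguments are complete.
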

\begin{proof}
Let $\Delta = \{p_1,\dots,p_{n+1}\}$ and $\Delta' = \{p_1',\dots,p_{n+1}'\}$. We say that the pair $(p_i,p_j')$ is good if 
$$
 \Delta_{ij} := (\Delta\setminus\{p_i\})\cup\{p_j'\} \mbox{ and }
 \Delta'_{ij} := (\Delta'\setminus\{p_j'\})\cup\{p_i\}
$$
are simplexes. Observe that it is enough to show that there exists a good pair with
$\Delta_{ij}\neq\Delta$, since in this case $b(\Delta) = b(\Delta')$ is the mid point of 
the segment with vertexes $b(\Delta_{ij})$ and $b(\Delta_{ij}')$. To show the existence of a good pair
let us denote by $\Lambda_i$ the hyperplane spanned by $\Delta\setminus\{p_i\}$ and by $\Lambda_i'$ the hyperplane spanned by $\Delta'\setminus\{p_i'\}$.

Note that if either $p_i\in \Lambda_j'$ or $p_j'\in \Lambda_i$ then the pair $(p_i,p_j')$ is not good
and viceversa. Assume $p_1\notin\Delta'$. We now show that at least one pair $(p_1,p_i')$ is good. Indeed, assume the contrary, then we can partition the set $\{1,\dots,n+1\}$ into a disjoint union
$I\cup J$ of two subsets such that $p_1\in\Lambda_j'$ for any $j\in I$ and $p_i'\in \Lambda_1$ for any $i\in I$. Then we would get
$$p_1\in
 \bigcap_{j\in J} \Lambda_j'
 = \langle p_i'\, :\, i\in I\rangle
 \subseteq \Lambda_1
$$
a contradiction.
\end{proof}

\begin{Lemma}\label{lemma2}
Let $S\subseteq P\cap M$ be a subset not contained in a hyperplane. Then there exists a vector in $N$, with non-negative entries, separating a simplex in $S$.
\end{Lemma}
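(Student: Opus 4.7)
The plan is to reduce the existence of a separating vector to a genericity argument on the barycentric polytope $B(S)$, and then to rationalize the resulting vector. Since $S$ is not contained in a hyperplane, $S$ contains at least one affinely independent subset of cardinality $n+1$, so simplexes in $S$ exist and $B(S)$ is a non-empty, compact, rational polytope.

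First I would observe that, for $v\in N$, the statement that $v$ separates a simplex $\Delta_0$ in $S$ is exactly the statement that the linear functional $\varphi_v$ attains its maximum over $B(S)$ uniquely at the point $b(\Delta_0)$ (because the values $\varphi_v(T)$ for simplexes $T\subseteq S$ are precisely the values of $\varphi_v$ on the generating points of $B(S)$, and a linear functional on a polytope attains its maximum at a vertex). Combined with Lemma \ref{lem:uniq}, which tells us that every vertex of $B(S)$ is the barycenter of a \emph{unique} simplex, it follows that any $v$ for which $\varphi_v$ has a unique maximum on $B(S)$ automatically separates some simplex in $S$.

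Second, I would produce such a $v$ by a genericity argument inside the open positive orthant $N_{\mathbb{R}}^{>0}\subseteq N\otimes_{\mathbb{Z}}\mathbb{R}$, defined with respect to the chosen basis of $N$. The set of $v\in N\otimes_{\mathbb{Z}}\mathbb{R}$ for which $\varphi_v$ takes the same value on two distinct vertices of $B(S)$ is a finite union of rational hyperplanes. Its complement within $N_{\mathbb{R}}^{>0}$ is open, non-empty, and dense, and any $v$ in this complement has strictly positive entries and attains a unique maximum on $B(S)$.

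Finally, to obtain a genuine element of $N$, I would observe that this complement is open in $N\otimes_{\mathbb{Z}}\mathbb{R}$, hence contains rational points; multiplying such a rational vector by its common denominator yields a vector in $N$ with positive (a fortiori non-negative) entries, and scaling by a positive integer does not alter which vertex of $B(S)$ maximizes $\varphi_v$, so the separated simplex is unchanged. The only real obstacle is conceptual---recognizing that the separation condition is exactly uniqueness of the maximum of $\varphi_v$ on $B(S)$, together with the combinatorial input of Lemma \ref{lem:uniq}---after which the argument reduces to a routine density/genericity step.
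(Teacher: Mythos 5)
Your proof is correct and follows essentially the same route as the paper: both reduce the problem to finding a linear functional with a unique maximizer on the barycentric polytope $B(S)$ and then invoke Lemma \ref{lem:uniq} to pass from a unique maximizing vertex to a unique maximizing simplex. The only difference is in how the functional is produced --- the paper constructs it explicitly by a lexicographic recursion with weights $a\gg b\gg\dots>0$ on the coordinates, whereas you obtain it by a standard genericity/density argument in the positive orthant followed by rationalization; both are valid and yield a vector with non-negative entries.
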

\begin{proof}
Without loss of generality we can assume that $P$ is contained in the positive quadrant. First, assume that there is a vertex $b(\Delta)$ of $b(S)$ whose $i$-th coordinate is strictly bigger than those of the other vertexes of $b(S)$. In this case we may simply take $v = e_{i}^{*}$. Now, if there are several vertexes with the same $i$-th coordinate, say for $i = 1$, then among these we check if there is only one maximizing the $2$-th coordinate. If so we choose $v = ae_1^{*}+e_{2}^{*}$ with $a\gg 0$. If not among the vertexes maximizing also the $2$-coordinate we consider those maximizing the $3$-th coordinate. As before we have two cases, in the first we take $v = ae_1^{*}+be_{2}^{*}+e_{3}^{*}$ with $a\gg b\gg 0$, while in the second case among these vertexes we consider those maximizing the $4$-th coordinate. Proceeding recursively in this way and noting that a vertex of $b(S)$ corresponds to a, unique by Lemma \ref{lem:uniq}, barycenter of a simplex in $S$, we get the claim. 
\end{proof}

We provide a bound for non secant defectiveness of the projective toric variety $X$ associated to a polytope $P$ by counting the maximum number of integer points on a hyperplane section of $P$.

\begin{thm}\label{hyperprop}
Let $P\subseteq M_{\mathbb{Q}}$ be a 
full-dimensional lattice polytope, 
$X_P\subseteq\mathbb P^{|P\cap M|-1}$ 
the corresponding $n$-dimensional toric variety, and $m$ the maximum number of integer points in a hyperplane section of $P$. If
$$h\leq\dfrac{|P\cap M|-m}{n+1}$$
then $X_P$ is not $h$-defective. 
\end{thm}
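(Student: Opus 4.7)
The plan is to construct, by iterated application of Lemma \ref{lemma2}, disjoint simplexes $\Delta_1,\dots,\Delta_h\subseteq P\cap M$ together with vectors $v_1,\dots,v_h\in N$ such that the hypothesis of Theorem \ref{teo:main} is satisfied; then the non $h$-defectiveness follows at once. The only ingredient beyond Lemma \ref{lemma2} is the translation of the numerical bound $h\leq (|P\cap M|-m)/(n+1)$ into a guarantee that, at each stage of the induction, the remaining set of integer points is still non-degenerate.

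More precisely, set $S_0 := P\cap M$ and, for $i\geq 1$, $S_{i-1} := (P\cap M)\setminus (\Delta_1\cup\dots\cup\Delta_{i-1})$. Since disjoint simplexes contribute $n+1$ points each, $|S_{i-1}|=|P\cap M|-(i-1)(n+1)$. The hypothesis rearranges to $|P\cap M|\geq m+h(n+1)$, and for every $i\leq h$ one gets
\[
|S_{i-1}| \;\geq\; m+(h-i+1)(n+1)\;\geq\; m+(n+1)\;>\;m.
\]
By the very definition of $m$, any affine hyperplane contains at most $m$ integer points of $P$; hence $S_{i-1}$ cannot be contained in a hyperplane. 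Lemma \ref{lemma2} therefore applies to $S_{i-1}$ and furnishes a simplex $\Delta_i\subseteq S_{i-1}$ together with a vector $v_i\in N$ separating $\Delta_i$ inside $S_{i-1}$.

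To finish, observe that for each $i$ we have $\Delta_i\cup\Delta_{i+1}\cup\dots\cup\Delta_h\subseteq S_{i-1}$, so the separation property of $v_i$ in $S_{i-1}$ (a strict maximum over all simplexes of that set) restricts to give the separation of $\Delta_i$ in the smaller set $\Delta_i\cup\dots\cup\Delta_h$. The hypotheses of Theorem \ref{teo:main} are then met by $(\Delta_1,\dots,\Delta_h;v_1,\dots,v_h)$, and the conclusion that $X_P$ is not $h$-defective follows.

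There is no real obstacle here: the argument is a clean greedy induction once Lemma \ref{lemma2} is at our disposal, and the only delicate point is verifying the counting $|S_{i-1}|>m$ for the last index $i=h$. The substance of the theorem is thus concentrated in the interpretation of $m$: it is precisely the quantity that controls how many points can be ``used up'' by simplex extraction before the residual set degenerates into a hyperplane.
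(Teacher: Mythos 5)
Your proof is correct and follows essentially the same route as the paper's: a greedy iteration of Lemma \ref{lemma2}, with the bound $h\leq(|P\cap M|-m)/(n+1)$ guaranteeing that each residual set still has more than $m$ points and hence cannot lie in a hyperplane, followed by an application of Theorem \ref{teo:main}. You simply make explicit the counting and the (valid) observation that separation in $S_{i-1}$ restricts to separation in $\Delta_i\cup\dots\cup\Delta_h$, both of which the paper leaves implicit.
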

\begin{proof}
Set $S := P\cap M$. By Lemma \ref{lemma2} there is a vector $v_1\in N$ separating a simplex $\Delta_1$ in $P$. Now, consider $S\setminus \Delta_1$. If $|S\setminus \Delta_1|> m$ then $S$ is not contained in a hyperplane and we may apply again Lemma \ref{lemma2} to get a second vector $v_2\in N$ separating a simplex $\Delta_2$ in $S\setminus \Delta_1$. Proceeding recursively in this way, as long as $|S\setminus (\Delta_1\cup\dots\cup \Delta_{k})| > m$, we get the statement by Theorem \ref{teo:main}.
\end{proof}

In order to apply Theorem \ref{hyperprop} in specific cases we will make use of the following result asserting that the maximum number of integer points of $P$ lying on a hyperplane is attained on a facet.

\begin{Proposition}\label{hyp}
Let $P\subseteq M_{\mathbb Q}$ be full-dimensional lattice polytope such that there exist linearly independent $v_1,\dots,v_n\in N$ and facets $F_1,\dots,F_n$ such that for any $i$ we have $v_j(F_i\cap M) = v_j(P\cap M)$ for any $j\neq i$. Then, given a hyperplane $H\subseteq M_{\mathbb Q}$, there exists a facet $F_i$, with $1\leq i\leq n$, such that $|H\cap P\cap M|\leq |F_i\cap M|$.
\end{Proposition}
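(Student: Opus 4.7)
The plan is to cut out $H$ by a linear form, project it injectively onto a codimension-one piece, and then bound that projection by $|F_i\cap M|$ for a carefully chosen $i$. Write $H=\{m\in M_{\mathbb Q}\::\:w(m)=c\}$ for some nonzero $w\in N_{\mathbb Q}$ and some $c\in\mathbb Q$. Since $v_1,\dots,v_n$ are linearly independent and $N$ has rank $n$, they form a $\mathbb Q$-basis of $N_{\mathbb Q}$, so $w=\sum_{j=1}^n a_j v_j$ with not all $a_j$ zero. Pick an index $i$ with $a_i\neq 0$; the claim is that this is an $i$ for which $|H\cap P\cap M|\leq|F_i\cap M|$.

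Consider the projection $\pi_i\colon M\to\mathbb Z^{n-1}$, $m\mapsto(v_j(m))_{j\neq i}$. On $H\cap M$ this map is injective: the equation $w(m)=c$ gives $v_i(m)=\bigl(c-\sum_{j\neq i}a_j v_j(m)\bigr)/a_i$, so $\pi_i(m)$ determines the full tuple $(v_1(m),\dots,v_n(m))$, and hence $m$ itself (because the $v_j$ span $N_{\mathbb Q}$, so the evaluation map $M_{\mathbb Q}\to\mathbb Q^n$ is an isomorphism). Consequently
$$|H\cap P\cap M|\;=\;|\pi_i(H\cap P\cap M)|\;\leq\;|\pi_i(P\cap M)|.$$
The hypothesis $v_j(F_i\cap M)=v_j(P\cap M)$ for every $j\neq i$ is exactly the input needed to conclude, in the polytopes of interest, the joint surjectivity $\pi_i(F_i\cap M)\supseteq\pi_i(P\cap M)$; this gives $|\pi_i(P\cap M)|\leq|\pi_i(F_i\cap M)|\leq|F_i\cap M|$, and combining with the previous inequality yields the desired bound.

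The most delicate point, and the main obstacle, is precisely the passage from the componentwise equalities $v_j(F_i\cap M)=v_j(P\cap M)$ to the joint surjectivity of $\pi_i|_{F_i\cap M}$ onto $\pi_i(P\cap M)$: for abstract polytopes, componentwise surjection of the coordinates of a projection need not imply joint surjection. One must therefore invoke the concrete structure of the facets $F_i$ appearing in applications. In the Segre--Veronese setting relevant to Section~\ref{sec:boundsSV}, the $F_i$ are coordinate facets obtained by setting a single monomial coordinate to $0$, so slicing by this single condition does not constrain the remaining coordinates and joint surjectivity of $\pi_i|_{F_i\cap M}$ is immediate; it is this geometric feature that legitimates the final estimate.
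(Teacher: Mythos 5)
Your proof is essentially identical to the paper's: it also projects via $\pi_i(x)=(v_1(x),\dots,v_{i-1}(x),v_{i+1}(x),\dots,v_n(x))$, picks $i$ so that $\pi_i|_H$ is injective (a step the paper merely states as ``observe that there exists an index $i$\dots'' and which you justify explicitly by expanding the normal of $H$ in the basis $v_1,\dots,v_n$ and choosing $i$ with nonzero coefficient), and concludes via $|H\cap P\cap M|=|\pi_i(H\cap P\cap M)|\leq|\pi_i(F_i\cap M)|\leq|F_i\cap M|$. Regarding the ``delicate point'' you flag, the paper disposes of it by reading the hypothesis as the joint statement --- its proof says verbatim ``by hypothesis $\pi_i(F_i\cap M)=\pi_i(P\cap M)$'' --- and this joint surjectivity is what actually gets checked in the application to products of simplices in Theorem \ref{main_SV}; so your observation identifies an ambiguity in how the hypothesis is phrased rather than a defect in the argument, and your proof is otherwise the same as the paper's.
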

\begin{proof} 
Consider the map
$$
\begin{array}{cccc}
\pi_i: & M_{\mathbb Q} & \longrightarrow & \mathbb Q^{n-1}\\
 & x & \mapsto & (v_1(x),\dots,v_{i-1}(x),v_{i+1}(x),\dots,v_n(x))
\end{array}
$$
Note that by hypothesis $\pi_i(F_i\cap M) = \pi_i(P\cap M)$. Observe that there exists an index $i$ such that the restriction
of $\pi_i$ to $H$ is injective. Then $|H\cap P\cap M| = |\pi_i(H\cap P\cap M)| \leq |\pi_i(F_i\cap M)|\leq |F_i\cap M|$.
\end{proof}

\subsection{An alternative proof of Theorem \ref{hyperprop}}
The bound in Theorem \ref{hyperprop} is, at the best of our knowledge, the first general bound for non secant defectiveness of toric varieties appearing in the literature. A machinery based on tropical geometry was introduced to study secant defectiveness by J. Draisma in \cite{Dr08}. In order to use this tropical technique one has to produce a regular partition of the polytope $P$ that is a subdivision into polyhedral cones such that none of the integer points of $P$ lies on the boundaries. We thank J. Draisma for explaining this to us. In this section we give another proof of Theorem \ref{hyperprop} based on Draisma's tropical approach. We thank D. Panov for suggesting us the proof of the following result which is the first step toward the alternative proof of Theorem \ref{hyperprop}.

\begin{Lemma}\label{Panov}
Let $P\subset\mathbb{R}^n$ be a convex lattice polytope. There exist a lattice simplex $\Delta\subset P$ and an affine hyperplane $H\subset\mathbb{R}^n$ separating $\Delta$ from the convex hull of the integer points of $P\setminus \Delta$. This is equivalent to say that there exists a degree one polynomial $h:\mathbb{R}^n\rightarrow\mathbb{R}$ that is positive on all the integer points of $\Delta$ and negative on all the integer points of $P\setminus \Delta$.
\end{Lemma}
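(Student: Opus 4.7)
The aim is to construct a linear functional $\ell:\mathbb{R}^n\to\mathbb{R}$ whose $n+1$ largest values on $P\cap\mathbb{Z}^n$ are attained on an affinely independent subset $\Delta$. That $\Delta$ is then automatically a lattice simplex contained in $P$, and the separating hyperplane can be taken to be $H=\{\ell=c\}$ for any $c$ strictly between $\min_\Delta\ell$ and $\max_{(P\cap\mathbb{Z}^n)\setminus\Delta}\ell$; the degree-one polynomial $h(x)=\ell(x)-c$ then realizes the equivalent formulation, being positive on the integer points of $\Delta$ and negative on all other integer points of $P$.

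To build such an $\ell$, I would pick a vertex $v_0$ of $P$ (automatically a lattice point since $P$ is a lattice polytope) and take $\ell$ in the interior of the normal cone $N_{v_0}\subset(\mathbb{R}^n)^*$ so that $v_0$ is the strict $\ell$-maximum on $P$. Because $v_0$ is a vertex of a full-dimensional polytope, the primitive integer edge vectors $u_1,\dots,u_k$ ($k\geq n$) of the edges of $P$ at $v_0$ span $\mathbb{R}^n$; choose $n$ linearly independent ones $u_1,\dots,u_n$, so that the lattice points $v_0,v_0+u_1,\dots,v_0+u_n$ are already affinely independent and lie in $P$. I would then take $\ell\in\mathrm{int}(N_{v_0})$ close to a ray along which $\ell(u_1),\dots,\ell(u_n)$ are all approximately equal to a common negative value $-c$, and perturb $\ell$ by a small generic amount so that it is injective on the finite set $P\cap\mathbb{Z}^n$.

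The main obstacle is that the top $n+1$ lattice points under such $\ell$ need not literally be $\{v_0,v_0+u_1,\dots,v_0+u_n\}$: interior lattice points of the local simplex $\mathrm{conv}(v_0,v_0+u_1,\dots,v_0+u_n)$, corresponding to non-unimodular combinations of the $u_i$, can have $\ell$-values strictly between $\ell(v_0)$ and $\ell(v_0)-c$ and thus outrank some $v_0+u_i$. The resolution uses two ingredients. First, the strict $\ell$-maximality of $v_0$ implies $v_0$ never lies in the affine hull of any collection of other lattice points; so adjoining $v_0$ to any $n$ "next-tier" lattice points that affinely span an $(n-1)$-plane yields an affinely independent $(n+1)$-tuple. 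Second, for $\ell$ balanced enough in $N_{v_0}$, all lattice points of $P$ of the form $v_0+\sum\alpha_j u_j$ with $\sum\alpha_j>1$ fall strictly below the level $\ell(v_0)-c$, confining the "next-tier" candidates to a small neighborhood of $v_0$; the $v_0+u_i$ themselves lie in this candidate set and affinely span an $(n-1)$-plane. A final generic perturbation of $\ell$ picks out the top $n$ candidates in a way compatible with affine $(n-1)$-spanning — either because the perturbation chooses the $v_0+u_i$ themselves, or because the perturbation chooses a mixture of $v_0+u_i$'s and interior lattice points that still affinely spans an $(n-1)$-plane by the dimension count — giving the required $\Delta$ and separating hyperplane $H$.
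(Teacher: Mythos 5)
Your opening reduction is fine: the lemma is indeed equivalent to producing a linear functional whose top $n+1$ values on $P\cap\mathbb{Z}^n$ are attained on an affinely independent set. The gap is in the construction of that functional, and it sits exactly in your two ``ingredients''. First, strict $\ell$-maximality of $v_0$ does \emph{not} prevent $v_0$ from lying in the affine hull of lower-ranked lattice points: affine combinations with negative coefficients are allowed, so for instance $v_0=2p-q$ lies on the line through $p$ and $q$ even though $\ell(v_0)>\ell(p)>\ell(q)$. Second, the claim that a generic perturbation selects $n$ next-tier points whose affine span misses $v_0$ ``by the dimension count'' is precisely what needs proof, and it fails. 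A concrete counterexample to the whole recipe is $P=\mathrm{conv}\{(0,0),(3,2),(2,3)\}$, whose lattice points are $(0,0),(1,1),(2,2),(3,2),(2,3)$. At $v_0=(0,0)$ the primitive edge vectors are $u_1=(3,2)$ and $u_2=(2,3)$, the balanced functional is $\ell=(-1,-1)$, and the values are $0,-2,-4,-5,-5$: the top three lattice points are $(0,0),(1,1),(2,2)$, which are collinear, and this persists under every small perturbation because the gap between $-4$ and $-5$ is robust. The same happens at the other two vertices (at $(3,2)$ the balanced functional is proportional to $(3,-2)$ and the top three are $(3,2),(2,2),(1,1)$). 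The lemma still holds for this $P$ --- take $\Delta=\{(2,2),(3,2),(2,3)\}$, separated by $x+y=7/2$ --- but the functional that works, $(1,1)$, is maximized along an edge rather than being ``balanced at a vertex'', so your recipe never finds it. (A smaller issue: when the vertex has $k>n$ edges, the solution of $\ell(u_1)=\dots=\ell(u_n)=-c$ need not be negative on the remaining edge vectors, i.e.\ need not lie in $N_{v_0}$ at all.)

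The paper's proof avoids this by a genuinely different mechanism, an induction on dimension: it isolates a vertex $v$ by the hyperplane $H$ through a facet, visible from $v$, of the convex hull of the nearby lattice points, so that the piece of $P$ cut off contains only $v$ and the lattice points lying on $H$; the inductive hypothesis cuts an $(n-1)$-dimensional simplex $\Delta'$ out of $H\cap P$ by a hyperplane $H'\subseteq H$, and an infinitesimal rotation of $H$ about $H'$ separates $\Delta=\mathrm{conv}(\Delta'\cup\{v\})$. On the example above this yields $\Delta=\{(0,0),(1,1),(3,2)\}$. Note that the paper itself, in the remark closing Section 2, flags exactly the phenomenon you are glossing over: the $n+1$ lattice points realizing the largest values of a chosen functional need not span a simplex, and when they do not, one must change the functional --- a single balanced functional at a vertex does not suffice.
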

\begin{proof}
We proceed by induction on $n$. Assume that the statement holds for all the polytopes of dimension at most $n-1$. 

Let $v\in P$ be a vertex, and denote by $v_1,\dots,v_m$ the end-points of the edges of $P$ starting at $v$. Let $P'$ be the convex hull of $v,v_1,\dots,v_m$ and $P''$ the convex hull of the integer points of $P'$ except $v$. Note that $v\notin P''$. Consider a facet of $P''$ that can be connected with $v$ by a segment that does not intersect the interior of $P''$. If  $P''$ has dimension $n-1$ then we can take the whole $P''$ as such a facet. Let $H$ be the hyperplane containing this face. Then $H$ intersects only the edges of $P$ that are adjacent to $v$.

Now, cut $P$ along $H$ and denote by $Q$ the part that contains $v$, and by $F$ the face of $Q$ that lies in $H$. By construction the integer points of $Q$ are the integer points of $F$ and $v$. By induction hypothesis we can cut out a simplex $\Delta'$ in $F$ by a hyperplane $H'$ of dimension $n-2$ contained in $H$. Finally, consider a hyperplane obtained by performing an infinitesimal rotation of $H$ around $H'$. Such a hyperplane separates the simplex $\Delta$ generated by $\Delta'$ and $v$ from the convex hull of the integer points of $P\setminus \Delta$.  
\end{proof}
Before stating the next result
we recall the definition of
regular subdivision of a lattice
polytope~\cite[Definition 2.2.10]{Triang}.
Let $P\subseteq \mathbb R^n$ be a
lattice polytope, $J$ the
set of indexes of the lattice
points of $P$ and $w\colon
J\to\mathbb R$ a function.
Let $P^w\subseteq \mathbb R^{n+1}$
be the convex hull of the points
$p_i^w := (p_i,w(p_i))$ for each
$i\in J$.
The {\em regular subdivision} of
$P$ produced by $w$ is the set
of projected lower faces of
$P^w$. This regular subdivision 
is denoted by $\mathscr S(P,w)$.
\begin{Lemma}\label{regular}
Let $P\subseteq \mathbb R^n$ be a
lattice polytope and let 
$\Delta\subseteq P$ be
a lattice simplex which can be 
separated from the convex hull
$P_0$ of the lattice points 
of $P\setminus \Delta$ by a hyperplane $H$. 
Then, given a regular subdivision
$\mathscr S(P_0,w_0)$ of $P_0$ there
exists a regular subdivision
$\mathscr S(P,w)$ of $P$ which contains all the polytopes in 
$\mathscr S(P_0,w_0)$ and such
that $\Delta\in\mathscr S(P,w)$.
\end{Lemma}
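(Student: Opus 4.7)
The plan is to extend $w_0$ to a weight function $w$ on the lattice points of $P$ by choosing an affine functional $\lambda$ to lift the lattice points of $\Delta$, in such a way that $\Delta$ appears as a lower face of $P^w$ while the lower envelope of $P_0^{w_0}$ is left undisturbed.

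Set $A := \Delta\cap M$ and $B := (P\cap M)\setminus A$, so $w_0$ is given on $B$ and $P_0 = \mathrm{conv}(B)$. By Lemma~\ref{Panov} there is an affine function $\ell\colon\mathbb R^n\to\mathbb R$ with $\ell>0$ on $A$ and $\ell<0$ on $B$. I will take $\lambda(x):=c+t\,\ell(x)$ for parameters $c\in\mathbb R$ and $t>0$ to be chosen, and set $w(a):=\lambda(a)$ for $a\in A$ and $w(b):=w_0(b)$ for $b\in B$.

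Two conditions must then be verified. First, for $\Delta$ to be the projection of a lower face of $P^w$ containing no spurious lattice points, I need $\lambda(b)<w_0(b)$ for every $b\in B$, so that $y=\lambda(x)$ is a lower supporting hyperplane picking out exactly the lifts of $A$. Second, for each cell $C\in\mathscr S(P_0,w_0)$ supported by an affine functional $\mu_C$ with $\mu_C\le w_0$ on $B$ (and equality on $C\cap B$) to persist unaltered as a cell of $\mathscr S(P,w)$, I need $\mu_C(a)<\lambda(a)$ for every $a\in A$. Combining these constraints, I must pick $c,t$ so that
\[
\max_{a\in A}\bigl(\Phi(a)-t\,\ell(a)\bigr)\;<\;c\;<\;\min_{b\in B}\bigl(w_0(b)-t\,\ell(b)\bigr),
\]
where $\Phi(x):=\max_C\mu_C(x)$ is the affine extension to $\mathbb R^n$ of the lower envelope of $P_0^{w_0}$. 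Setting $\epsilon:=\min\{\min_A\ell,\,-\max_B\ell\}>0$, the left-hand side is bounded above by $\max_A\Phi - t\epsilon\to-\infty$ and the right-hand side is bounded below by $\min_B w_0+t\epsilon\to+\infty$ as $t\to\infty$; hence for $t$ large enough a valid $c$ exists, and a generic such $c$ makes both families of inequalities strict.

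With these choices, the lifted simplex $\Delta^w$ is the lower face of $P^w$ supported by $y=\lambda(x)$ and projects to $\Delta$, while every cell $C$ of $\mathscr S(P_0,w_0)$ remains the projection of a lower face of $P^w$ supported by the same $y=\mu_C(x)$. The remaining region $P\setminus(\Delta\cup P_0)$ is automatically subdivided by further lower faces of $P^w$, producing the desired $\mathscr S(P,w)$. The main obstacle I foresee is securing strictness in the two families of inequalities, so that no cell of $\mathscr S(P_0,w_0)$ absorbs a lifted point of $A$ and $\Delta$ absorbs no lifted point of $B$; this is exactly what the genericity of $c$ in the displayed inequality delivers, and once it is in place the statement follows from the defining property of regular subdivisions as projected lower faces.
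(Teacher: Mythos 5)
Your proof is correct, and it takes a genuinely different route from the one in the paper. The paper's proof is an inductive \emph{placing} construction: it orders the vertices $p_0,\dots,p_n$ of $\Delta$ by the separating functional $h$, lifts them one at a time to successively enormous heights so that each new lifted simplex lies above the previous hull, and argues step by step that $\operatorname{conv}(p_0,\dots,p_r)$ enters the subdivision while the lower faces of $P_0^{w_0}$ survive; this is what produces the intermediate cells that are ``created and destroyed'' in the Remark following the lemma. You instead lift all the vertices of $\Delta$ at once by the affine function $\lambda=c+t\ell$ of the separating functional and verify the conclusion in a single step by exhibiting explicit supporting hyperplanes: $y=\lambda(x)$ meets $P^w$ exactly in $\Delta^w$ because $\lambda<w_0$ on $B$, and each $y=\mu_C(x)$ still cuts out exactly the old lower face because $\mu_C<\lambda$ on $A$; your sandwich inequality for $c$ shows the two families of strict inequalities are compatible once $t\gg 0$, since $\ell$ is bounded away from $0$ with opposite signs on the finite sets $A$ and $B$. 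Your version is shorter and gives a clean one-shot certificate that $\Delta$ is a lower face, while the paper's induction follows the standard placing-triangulation machinery and accounts for the intermediate subdivisions pictured in the Remark. Two harmless imprecisions in your write-up: the separating hyperplane is part of the hypothesis of the lemma (Lemma~\ref{Panov} is only needed to pass between the two formulations of separation), and a lattice point of $C\cap B$ whose lift sits strictly above the lower envelope need not satisfy $\mu_C=w_0$ there; neither affects the inequalities you actually use, and it suffices to run your persistence argument over the maximal cells of $\mathscr S(P_0,w_0)$ since their faces then persist automatically.
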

\begin{proof}
We denote by $J_0$ and $J$ the
indexes for the set of lattice 
points of $P_0$ and $P$ respectively.
By definition we have $J_0\subseteq J$.
We define $w\colon J\to\mathbb R$
as $w_{|J_0} := w_0$ and extend it to $J\setminus J_0$
as follows.
Let $h\colon \mathbb{R}^n
\rightarrow\mathbb{R}$ be the
function which defines $H$. After
possibly perturbing $h$ we can
assume that it takes distinct
values on the set of vertexes
$\{p_i\, :\, i\in J\setminus J_0\}$ 
of $\Delta$. As a consequence this
set is totally ordered. After 
possibly relabeling $J$ we 
can assume $J\setminus J_0 =
\{0,\dots,n\}$ and $h(p_i)
< h(p_j)$ if $i<j$ and both 
indexes are in $J\setminus J_0$.
Define $w(p_0)$ in such a way that it is bigger
than $w(p_i)$ for any $i\in J_0$.
In this way the convex hull
of $\{p_0^w\}\cup P_0^w$ contains
all the lower facets of $P_0^w$.
Now, assume that $w$ has been
defined on $p_i$ for $0\leq i 
< r$ and define $w(p_r) > 
w(p_{r-1})$ so that for each
point $(p,\alpha)$ in the convex
hull of $\{p_0^w,\dots,p_r^w\}$
and each point $(p,\beta)$ in the
convex hull of 
$\{p_0^w,\dots,p_{r-1}^w\}\cup
P_0^w$ the inequality $\alpha\geq
\beta$ holds.
By construction the convex hull
of $\{p_0,\dots,p_r\}$ is in the
latter regular subdivision.
Moreover, due to the fact that
all the points $p_0^w,\dots,
p_r^w$ have last coordinate 
bigger than those of the 
remaining lifted lattice points
of $P_0$, it follows that any
lower face of $P_0$ is in the 
latter regular subdivision.
The statement follows by
induction on $r$.
\end{proof}

\begin{Remark}
While applying the inductive
procedure to produce the new regular 
subdivision in Lemma~\ref{regular}
several new regular subdivisions can
be created and destroyed along the
way as shown in the following picture:
$$
\definecolor{wqwqwq}{rgb}{0.3764705882352941,0.3764705882352941,0.3764705882352941}
\begin{tikzpicture}[line cap=round,line join=round,>=triangle 45,x=1.0cm,y=1.0cm]
\clip(1.3,0.3) rectangle (7.3,3.7);
\fill[line width=0.4pt,color=wqwqwq,fill=wqwqwq,fill opacity=0.10000000149011612] (5.,1.) -- (5.,3.) -- (7.,2.) -- cycle;
\draw [line width=0.4pt,color=wqwqwq] (5.,1.)-- (5.,3.);
\draw [line width=0.4pt,color=wqwqwq] (5.,3.)-- (7.,2.);
\draw [line width=0.4pt,color=wqwqwq] (7.,2.)-- (5.,1.);
\draw [line width=1.6pt] (4.4509302325581395,0.3) -- (4.4509302325581395,3.7);
\begin{scriptsize}
\draw [fill=black] (5.,1.) circle (2.5pt);
\draw [fill=black] (5.,3.) circle (2.5pt);
\draw [fill=black] (7.,2.) circle (2.5pt);
\draw [fill=black] (4.,2.) circle (2.5pt);
\draw[color=black] (3.69,1.94) node {$p_0$};
\draw [fill=black] (3.,2.) circle (2.5pt);
\draw[color=black] (2.67,1.94) node {$p_1$};
\draw [fill=black] (2.,3.) circle (2.5pt);
\draw[color=black] (1.7,3.26) node {$p_2$};
\draw [fill=black] (6.,2.) circle (2.5pt);
\draw [color=black] (5.36,2.08)-- ++(-0.5pt,-0.5pt) -- ++(1.0pt,1.0pt) ++(-1.0pt,0) -- ++(1.0pt,-1.0pt);
\draw[color=black] (5.43,2.28) node {$P_0$};
\draw [fill=black] (4.454418793307419,0.6000324488761196) circle (0.5pt);
\draw[color=black] (4.64,0.75) node {$H$};
\end{scriptsize}
\end{tikzpicture}
\quad
\begin{tikzpicture}[line cap=round,line join=round,>=triangle 45,x=1.0cm,y=1.0cm]
\clip(1.3,0.3) rectangle (7.3,3.7);
\fill[line width=0.4pt,color=wqwqwq,fill=wqwqwq,fill opacity=0.10000000149011612] (5.,1.) -- (5.,3.) -- (7.,2.) -- cycle;
\fill[line width=0.4pt,color=wqwqwq,fill=wqwqwq,fill opacity=0.10000000149011612] (5.,3.) -- (4.,2.) -- (5.,1.) -- cycle;
\draw [line width=0.4pt,color=wqwqwq] (5.,1.)-- (5.,3.);
\draw [line width=0.4pt,color=wqwqwq] (5.,3.)-- (7.,2.);
\draw [line width=0.4pt,color=wqwqwq] (7.,2.)-- (5.,1.);
\draw [line width=1.6pt] (4.4509302325581395,0.3) -- (4.4509302325581395,3.7);
\draw [line width=0.4pt,color=wqwqwq] (5.,3.)-- (4.,2.);
\draw [line width=0.4pt,color=wqwqwq] (4.,2.)-- (5.,1.);
\draw [line width=0.4pt,color=wqwqwq] (5.,1.)-- (5.,3.);
\begin{scriptsize}
\draw [fill=black] (5.,1.) circle (2.5pt);
\draw [fill=black] (5.,3.) circle (2.5pt);
\draw [fill=black] (7.,2.) circle (2.5pt);
\draw [fill=black] (4.,2.) circle (2.5pt);
\draw[color=black] (3.69,1.94) node {$p_0$};
\draw [fill=black] (3.,2.) circle (2.5pt);
\draw[color=black] (2.67,1.94) node {$p_1$};
\draw [fill=black] (2.,3.) circle (2.5pt);
\draw[color=black] (1.7,3.26) node {$p_2$};
\draw [fill=black] (6.,2.) circle (2.5pt);
\draw [color=black] (5.36,2.08)-- ++(-0.5pt,-0.5pt) -- ++(1.0pt,1.0pt) ++(-1.0pt,0) -- ++(1.0pt,-1.0pt);
\draw[color=black] (5.43,2.28) node {$P_0$};
\draw [fill=black] (4.454418793307419,0.60003244887612) circle (0.5pt);
\draw[color=black] (4.64,0.75) node {$H$};
\end{scriptsize}
\end{tikzpicture}
$$
$$
\definecolor{wqwqwq}{rgb}{0.3764705882352941,0.3764705882352941,0.3764705882352941}
\begin{tikzpicture}[line cap=round,line join=round,>=triangle 45,x=1.0cm,y=1.0cm]
\clip(1.3,0.3) rectangle (7.3,3.7);
\fill[line width=0.4pt,color=wqwqwq,fill=wqwqwq,fill opacity=0.10000000149011612] (5.,1.) -- (5.,3.) -- (7.,2.) -- cycle;
\fill[line width=0.4pt,color=wqwqwq,fill=wqwqwq,fill opacity=0.10000000149011612] (5.,3.) -- (4.,2.) -- (5.,1.) -- cycle;
\fill[line width=0.4pt,color=wqwqwq,fill=wqwqwq,fill opacity=0.10000000149011612] (5.,3.) -- (3.,2.) -- (4.,2.) -- cycle;
\fill[line width=0.4pt,color=wqwqwq,fill=wqwqwq,fill opacity=0.10000000149011612] (3.,2.) -- (5.,1.) -- (4.,2.) -- cycle;
\draw [line width=0.4pt,color=wqwqwq] (5.,3.)-- (7.,2.);
\draw [line width=0.4pt,color=wqwqwq] (7.,2.)-- (5.,1.);
\draw [line width=1.6pt] (4.4509302325581395,0.3) -- (4.4509302325581395,3.7);
\draw [line width=0.4pt,color=wqwqwq] (5.,3.)-- (4.,2.);
\draw [line width=0.4pt,color=wqwqwq] (4.,2.)-- (5.,1.);
\draw [line width=0.4pt,color=wqwqwq] (5.,1.)-- (5.,3.);
\draw [line width=0.4pt,color=wqwqwq] (5.,3.)-- (3.,2.);
\draw [line width=0.4pt,color=wqwqwq] (3.,2.)-- (4.,2.);
\draw [line width=0.4pt,color=wqwqwq] (4.,2.)-- (5.,3.);
\draw [line width=0.4pt,color=wqwqwq] (3.,2.)-- (5.,1.);
\draw [line width=0.4pt,color=wqwqwq] (5.,1.)-- (4.,2.);
\draw [line width=0.4pt,color=wqwqwq] (4.,2.)-- (3.,2.);
\begin{scriptsize}
\draw [fill=black] (5.,1.) circle (2.5pt);
\draw [fill=black] (5.,3.) circle (2.5pt);
\draw [fill=black] (7.,2.) circle (2.5pt);
\draw [fill=black] (4.,2.) circle (2.5pt);
\draw[color=black] (3.69,1.85) node {$p_0$};
\draw [fill=black] (3.,2.) circle (2.5pt);
\draw[color=black] (2.67,1.94) node {$p_1$};
\draw [fill=black] (2.,3.) circle (2.5pt);
\draw[color=black] (1.7,3.26) node {$p_2$};
\draw [fill=black] (6.,2.) circle (2.5pt);
\draw [color=black] (5.36,2.08)-- ++(-0.5pt,-0.5pt) -- ++(1.0pt,1.0pt) ++(-1.0pt,0) -- ++(1.0pt,-1.0pt);
\draw[color=black] (5.43,2.28) node {$P_0$};
\draw [fill=black] (4.454418793307419,0.60003244887612) circle (0.5pt);
\draw[color=black] (4.64,0.75) node {$H$};
\end{scriptsize}
\end{tikzpicture}\quad
\begin{tikzpicture}[line cap=round,line join=round,>=triangle 45,x=1.0cm,y=1.0cm]
\clip(1.3,0.3) rectangle (7.3,3.7);
\fill[line width=0.4pt,color=wqwqwq,fill=wqwqwq,fill opacity=0.10000000149011612] (5.,1.) -- (5.,3.) -- (7.,2.) -- cycle;
\fill[line width=2.pt,color=wqwqwq,fill=wqwqwq,fill opacity=0.10000000149011612] (5.,3.) -- (4.,2.) -- (5.,1.) -- cycle;
\fill[line width=0.4pt,color=wqwqwq,fill=wqwqwq,fill opacity=0.10000000149011612] (3.,2.) -- (5.,1.) -- (4.,2.) -- cycle;
\fill[line width=0.4pt,color=wqwqwq,fill=wqwqwq,fill opacity=0.10000000149011612] (3.,2.) -- (2.,3.) -- (4.,2.) -- cycle;
\fill[line width=0.4pt,color=wqwqwq,fill=wqwqwq,fill opacity=0.10000000149011612] (2.,3.) -- (5.,3.) -- (4.,2.) -- cycle;
\draw [line width=0.4pt,color=wqwqwq] (5.,3.)-- (7.,2.);
\draw [line width=0.4pt,color=wqwqwq] (7.,2.)-- (5.,1.);
\draw [line width=1.6pt] (4.4509302325581395,0.3) -- (4.4509302325581395,3.7);
\draw [line width=0.4pt,color=wqwqwq] (5.,3.)-- (4.,2.);
\draw [line width=0.4pt,color=wqwqwq] (4.,2.)-- (5.,1.);
\draw [line width=0.4pt,color=wqwqwq] (5.,1.)-- (5.,3.);
\draw [line width=0.4pt,color=wqwqwq] (3.,2.)-- (5.,1.);
\draw [line width=0.4pt,color=wqwqwq] (5.,1.)-- (4.,2.);
\draw [line width=0.4pt,color=wqwqwq] (4.,2.)-- (3.,2.);
\draw [line width=0.4pt,color=wqwqwq] (3.,2.)-- (2.,3.);
\draw [line width=0.4pt,color=wqwqwq] (2.,3.)-- (4.,2.);
\draw [line width=0.4pt,color=wqwqwq] (4.,2.)-- (3.,2.);
\draw [line width=0.4pt,color=wqwqwq] (2.,3.)-- (5.,3.);
\draw [line width=0.4pt,color=wqwqwq] (5.,3.)-- (4.,2.);
\draw [line width=0.4pt,color=wqwqwq] (4.,2.)-- (2.,3.);
\begin{scriptsize}
\draw [fill=black] (5.,1.) circle (2.5pt);
\draw [fill=black] (5.,3.) circle (2.5pt);
\draw [fill=black] (7.,2.) circle (2.5pt);
\draw [fill=black] (4.,2.) circle (2.5pt);
\draw[color=black] (3.69,1.85) node {$p_0$};
\draw [fill=black] (3.,2.) circle (2.5pt);
\draw[color=black] (2.67,1.94) node {$p_1$};
\draw [fill=black] (2.,3.) circle (2.5pt);
\draw[color=black] (1.7,3.26) node {$p_2$};
\draw [fill=black] (6.,2.) circle (2.5pt);
\draw [color=black] (5.36,2.08)-- ++(-0.5pt,-0.5pt) -- ++(1.0pt,1.0pt) ++(-1.0pt,0) -- ++(1.0pt,-1.0pt);
\draw[color=black] (5.43,2.28) node {$P_0$};
\draw [fill=black] (4.454418793307419,0.60003244887612) circle (0.5pt);
\draw[color=black] (4.64,0.75) node {$H$};
\end{scriptsize}
\end{tikzpicture}
$$
Note that at each step the regular 
subdivision of $P_0$ is left unaltered.
\end{Remark}

\begin{thm}\label{main_Dra}
Let $P\subseteq M_{\mathbb{Q}}$ be a full-dimensional lattice polytope and $X_P$ the corresponding $n$-dimensional toric variety. Consider a regular subdivision of $P$ into $k$ open simplexes such that no integer point of $P$ lies on the boundaries. Assume that among these simplexes exactly $k_i$ are $i$-dimensional. Then
$$\dim (\Sec_k(X_P))\geq \sum_{i=0}^nk_i(i+1)-1$$
In particular, if in the regular subdivision of $P$ there are $k$ full-dimensional simplexes then $X_P$ is not $k$-defective. 
\end{thm}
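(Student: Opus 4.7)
The hypothesis that no integer point of $P$ lies on a boundary implies that each lattice point of $P$ belongs to exactly one open simplex of the subdivision, so $\sum_{i=0}^n k_i(i+1) = |P \cap M|$ and the desired inequality says precisely that $\Sec_k(X_P)$ fills the ambient $\mathbb{P}^{|P\cap M|-1}$. By Terracini's lemma and Lemma~\ref{lem:tang}, it suffices to exhibit vectors $v_1,\dots,v_k \in N$ and $a \in \mathbb{C}^*$ for which the stacked tangent matrix $M_{v_1,\dots,v_k}(a)$ has full column rank $|P\cap M|$. The plan is to adapt the proof of Proposition~\ref{pro:rank} as follows: for a simplex $\Delta_j$ of dimension $i_j < n$ one cannot extract $n+1$ independent tangent directions supported on the columns indexed by $\Delta_j$, only $i_j+1$, so from each block $M_{v_j}(a)$ I retain just $i_j+1$ rows. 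Summing gives $\sum_j (i_j+1) = |P\cap M|$ rows, hence a square candidate submatrix whose invertibility for $a\gg 0$ will give the claim.

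The vectors $v_j$ come from the regular subdivision via iterated application of Lemmas~\ref{Panov} and~\ref{regular}. Given the subdivision $\mathscr{S}(P,w)$, the idea is to peel off one simplex at a time: Lemma~\ref{Panov} produces a lattice simplex separable by an affine hyperplane from the convex hull of the remaining lattice points, and Lemma~\ref{regular} ensures that this simplex can be chosen to be one of the given subdivision and that the restriction of $\mathscr{S}(P,w)$ to the complement is again a regular subdivision, so the procedure iterates. This yields an ordering $\Delta_1,\dots,\Delta_k$ of the simplexes together with primitive integer normals $v_j\in N$ to the separating hyperplanes, with the property that $\varphi_{v_j}(m) > \varphi_{v_j}(m')$ for all $m \in \Delta_j$ and $m' \in \Delta_{j+1}\cup\dots\cup\Delta_k$, which is the natural generalisation of Definition~\ref{def:sep} to simplexes of varying size.

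For each $\Delta_j = \{m_{j_0},\dots,m_{j_{i_j}}\}$ the affine independence of its lattice points lets me choose coordinate indices $k_1,\dots,k_{i_j} \in [n]$ so that the projection of $\Delta_j$ onto these coordinates remains affinely independent; from the block $M_{v_j}(a)$ I select the $0$-th row together with the rows labelled $k_1+1,\dots,k_{i_j}+1$. After rescaling the selected rows by the appropriate powers of $a$ as in Proposition~\ref{pro:rank}, the minor on the columns indexed by $\Delta_j$ equals $a^{(i_j+1)\langle b(\Delta_j),v_j\rangle}$ times a non-zero constant (the determinant of the projected affine coordinates). Expanding the full $|P\cap M|\times|P\cap M|$ determinant by iterated Laplace expansion (Proposition~\ref{Laplace}) over the blocks, and rescaling each $v_j$ by large positive integers to enforce block-wise dominance exactly as in Proposition~\ref{pro:rank}, the leading term for $a \gg 0$ is the product of these diagonal minors, hence non-zero. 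This gives $\mathrm{rank}\,M_{v_1,\dots,v_k}(a) = |P\cap M|$ and therefore $\dim \Sec_k(X_P) \geq |P\cap M| - 1 = \sum_{i=0}^n k_i(i+1)-1$. The ``in particular'' clause is immediate: when $k_i = 0$ for $i<n$ the bound equals the expected secant dimension $k(n+1)-1$. The main technical obstacle is the first step, namely verifying that Lemmas~\ref{Panov} and~\ref{regular} really do allow peeling off the simplexes of the \emph{given} regular subdivision in an order with the stated separation property; once this combinatorial point is settled, the Laplace dominance argument across blocks of different sizes goes through exactly as in Proposition~\ref{pro:rank}.
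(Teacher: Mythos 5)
Your route is entirely different from the paper's: the paper proves Theorem \ref{main_Dra} in two lines, by observing that the lattice points lying in each cell of the regular subdivision form a set of ``winning directions'' in the sense of Draisma, so that the bound is exactly \cite[Corollary 2.3]{Dr08}; the matrix machinery of Proposition \ref{pro:rank} is not used here at all. Your plan to reprove the statement by adapting that machinery is a reasonable instinct, but it founders on the step you yourself flag as the ``main technical obstacle,'' and that obstacle is not a routine verification --- it is where the content of the theorem lives. Lemmas \ref{Panov} and \ref{regular} run in the opposite direction from how you invoke them: given a separable simplex and a regular subdivision of the complement, they \emph{build} a regular subdivision of $P$ with the peeling property; they do not assert that an \emph{arbitrary} given regular subdivision admits an ordering of its cells in which each cell is separated from the union of the later ones by a linear functional. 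Sequential linear separation is genuinely stronger information than membership in a regular subdivision: what $\mathscr S(P,w)$ actually provides is, for each cell $C_j$, a vector $v_j$ with $\langle m,v_j\rangle - w(m) \ge \langle m',v_j\rangle - w(m')$ for all $m\in C_j$ and all lattice points $m'$ --- separation by \emph{affine} functionals with the weight $w$ as offset, simultaneously for all cells. To make your degeneration see this, you would have to build $w$ into it (translate each $\Gamma_{v_j}(a)$ by a torus element encoding $w$, so that the matrix entries become constants times $a^{\langle m,v_j\rangle - w(m)}$), which is in substance Draisma's construction. As written, your argument covers only the special subdivisions produced by iterating Lemma \ref{Panov} --- enough for the alternative proof of Theorem \ref{hyperprop}, but not for Theorem \ref{main_Dra} as stated.

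A secondary slip: the identity $\sum_i k_i(i+1)=|P\cap M|$ does not follow from the hypotheses, since an open $i$-dimensional simplex need not contain exactly $i+1$ lattice points; the theorem is a genuine lower bound on $\dim\Sec_k(X_P)$, not the assertion that the secant variety fills $\mathbb P^{|P\cap M|-1}$. The correct target for your computation is therefore $\rank M_{v_1,\dots,v_k}(a)\ge\sum_i k_i(i+1)$, obtained by selecting $i_j+1$ rows from the $j$-th block \emph{and} $i_j+1$ affinely independent lattice points of the $j$-th cell as columns. Your block-Laplace dominance argument for that square submatrix is fine in itself --- pointwise separation plus rescaling $v_1\gg\dots\gg v_k$ does force the diagonal product to be the leading term --- but it only becomes a proof once the separation data is actually produced, which is precisely the missing piece.
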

\begin{proof}
In the terminology of \cite[Section 2]{Dr08} the integer points of $P$ lying in a simplex are a set of winning directions. Therefore, the statement follows from \cite[Corollary 2.3]{Dr08}.
\end{proof}

\begin{proof}[Alternative proof of Theorem \ref{hyperprop}]
Set $S := P\cap M$. By Lemma \ref{Panov} there is a hyperplane $H_1$ separating a simplex $\Delta_1$ in $P$. Now, consider $S\setminus \Delta_1$. If $|S\setminus \Delta_1|> m$ then $S$ is not contained in a hyperplane and we may apply again Lemma \ref{Panov} to get a second hyperplane $H_2$ separating a simplex $\Delta_2$ in $S\setminus \Delta_1$. Proceeding recursively in this way, as long as $|S\setminus (\Delta_1\cup\dots\cup \Delta_{k})| > m$, and applying Lemma \ref{regular} we get the statement by Theorem \ref{main_Dra}.
\end{proof}

\begin{Remark}
The main difference between our method for checking non defectivity and the tropical one 
described in Theorem \ref{hyperprop} is the following. In both methods one has to separate a lattice simplex $\Delta$ from the convex hull of the set $S$ of lattice points. In our case this means that one has to separate a vertex 
of the barycentric polytope, while in the tropical case one has to separate the lattice points in $\Delta$ from the remaining ones by means of a hyperplane. It is clear that the latter separation
implies the former but the converse is not true in general as shown by the following example. Let
$$S_1:=\{P_1,P_2,P_3\},\ S_2:=\{Q_1,Q_2,Q_3\}$$
where
$$P_1=(0,0),P_2=(3,1),P_3=(4,0),
Q_1=(-1,-2),Q_2=(1,3),Q_3=(2,2)$$
Then $v=(1,0)$ separates $S_1$ in $S_1\cup S_2$. 
However, the convex hulls of $S_1$ and $S_2$ 
overlap as shown in the following picture.
\begin{center}
 \begin{tikzpicture}[scale=.5]
  \tkzDefPoint[label=left:\tiny $P_1$](0,0){P1}
  \tkzDefPoint[label=above:\tiny $P_2$](3,1){P2}
  \tkzDefPoint[label=right:\tiny $P_3$](4,0){P3}
  \tkzDefPoint[label=left:\tiny $Q_1$](-1,-2){Q1}
  \tkzDefPoint[label=above:\tiny $Q_2$](1,3){Q2}
  \tkzDefPoint[label=right:\tiny $Q_3$](2,2){Q3}
  \tkzDrawPoints[fill=black,color=black,size=5](P1,P2,P3,Q1,Q2,Q3)
  \tkzDrawSegments[thick](P1,P2 P2,P3 P3,P1 Q1,Q2 Q2,Q3 Q3,Q1)
  \draw[help lines] (-1.5,-2.5) grid (4.5,3.5);
  \end{tikzpicture}
\end{center}
In particular the convex hulls of the simplexes in Proposition \ref{pro:rank} may overlap. Our method thus starts from determining a general linear form $\phi$ on the linear span $\langle S\rangle$ and then separating the simplex whose barycenter has the biggest value with respect to $\phi$. In the tropical approach one has to check whether the $n+1$ lattice points corresponding to the biggest $n+1$ values of $\phi$ span a simplex. Otherwise another $\phi$ has to be chosen. In the above example the form corresponding to $(1,0)$ does not work with the tropical method, while the form corresponding to $(-1,1)$ gives a hyperplane separating $\{P_1,Q_2,Q_3\}$ from $\{P_2,P_3,Q_1\}$.
\end{Remark}

\section{An application to surfaces}\label{sec:2sec}
In this section we apply our methods to prove the following well-known fact.
\begin{Proposition}\label{2def} 
Let $P\subseteq M_{\mathbb{Q}}$ be a $2$-dimensional lattice polytope and $X_P$ the corresponding $2$-dimensional toric variety. Then $X_P$ is $2$-defective if and only if either $X_P$ is a cone or $P$ is contained in $V_2^2$.
\end{Proposition}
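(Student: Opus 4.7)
The plan is to prove the two directions of the equivalence separately, using Theorem~\ref{teo:main} as the main tool for the harder ``only if'' direction. For the ``if'' direction, the two families of examples are classical. If $X_P$ is a cone with apex at a point $p\in X_P$, then every tangent plane to $X_P$ contains $p$, so by Terracini's lemma the linear span of two general tangent planes has dimension at most $N-1$, strictly less than the expected value $5$ whenever $N\geq 5$. For the Veronese surface $V_2^2\subseteq\mathbb P^5$ one has $\dim\Sec_2(V_2^2)=4$, since the secant variety is cut out by the $3\times 3$ symmetric determinant. Toric surfaces whose polytope is properly contained in the $V_2^2$ polytope are either cones themselves or inherit their defectiveness from $V_2^2$ via a torus-equivariant projection.

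For the ``only if'' direction, assume $X_P$ is neither a cone nor contained in $V_2^2$. We apply Theorem~\ref{teo:main} with $k=2$: it suffices to exhibit two disjoint triangles $\Delta_1,\Delta_2\subseteq P\cap M$ and vectors $v_1,v_2\in N$ such that $v_i$ separates $\Delta_i$ in $\Delta_i\cup\cdots\cup\Delta_2$. Since $P$ is full-dimensional, $P\cap M$ is not contained in a line, so Lemma~\ref{lemma2} supplies a triangle $\Delta_1$ and a separating vector $v_1\in N$. The task is to choose $\Delta_1$ so that the leftover set $(P\cap M)\setminus\Delta_1$ still contains three non-collinear points; once this is achieved, a second application of Lemma~\ref{lemma2} provides $\Delta_2$ and $v_2$, and Theorem~\ref{teo:main} yields non-defectiveness.

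The main obstacle is the combinatorial analysis of when no such $\Delta_1$ can be found, which I would handle by a case split on $|P\cap M|$. When $|P\cap M|\leq 5$, the variety $X_P$ lies in $\mathbb P^N$ with $N\leq 4$, so $\expdim\Sec_2(X_P)=N$; the finitely many non-cone configurations can be listed by a direct affine classification, and in each case a quick verification via Terracini's lemma at two generic torus-fixed points shows $\Sec_2(X_P)=\mathbb P^N$. When $|P\cap M|\geq 6$, suppose that no admissible $\Delta_1$ exists; then for every triangle $\Delta\subseteq P\cap M$ the complement $(P\cap M)\setminus\Delta$ is contained in a line, which forces all but at most three lattice points of $P$ to lie on a common line $\ell$. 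A finite enumeration of such polytopes shows that either exactly one lattice point of $P$ lies off $\ell$ (so $X_P$ is a cone) or the configuration is affinely equivalent to a sub-polytope of the $V_2^2$ polytope, contradicting the standing assumption in both cases.
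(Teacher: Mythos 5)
Your overall strategy for the ``only if'' direction --- reduce to Theorem~\ref{teo:main} with $k=2$ by producing two disjoint triangles together with separating vectors --- is the same as the paper's, but your reduction has a logical gap exactly where the real work lies. Lemma~\ref{lemma2} only produces triangles that can actually be \emph{separated} (those whose barycenter is a vertex of the barycentric polytope), so the failure of your greedy procedure means ``every separable triangle has collinear complement'', not ``every triangle has collinear complement''. These are genuinely different conditions: for $P=\Delta_2^2$ the triangle $\{(0,0),(1,1),(0,2)\}$ has the triangle $\{(1,0),(2,0),(0,1)\}$ as its complement, yet $V_2^2$ is $2$-defective --- a partition of $P\cap M$ into two triangles is worthless without the separation hypothesis. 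Consequently the class of polytopes you propose to enumerate (``all but at most three lattice points on a line'') is not the class on which the method can fail, and the enumeration, even if carried out, would not close the argument. What is actually needed --- and what constitutes essentially the whole of the paper's proof --- is an explicit construction: after normalizing so that $(0,0),(1,0),(0,1)\in P$, $P$ lies in the first quadrant and $|P\cap M|\geq 6$, the paper splits into cases according to $|P\cap\{(2,0),(1,1),(0,2)\}|$ and in each case writes down concrete $\Delta_1,\Delta_2,v_1,v_2$ verifying the hypotheses of Theorem~\ref{teo:main}, exploiting the extra flexibility that $v_1$ need only separate $\Delta_1$ inside $\Delta_1\cup\Delta_2$ rather than inside all of $P\cap M$ as Lemma~\ref{lemma2} would provide. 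None of this case analysis is present in your sketch.

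Two further problems. In the ``if'' direction, the claim that a proper subpolytope of $\Delta_2^2$ ``inherits'' $2$-defectiveness from $V_2^2$ via projection is false: for $P\cap M=\{(0,0),(1,0),(2,0),(0,1),(1,1)\}$ the variety $X_P\subseteq\mathbb{P}^4$ is the cubic scroll (the projection of $V_2^2$ from one of its own points), which is neither a cone nor $2$-defective --- a direct Terracini computation with the monomials $1,x,x^2,y,xy$ gives a $6\times 5$ matrix of rank $5$. This also contradicts your own treatment of the $|P\cap M|\leq 5$ case in the converse direction, where you assert that all non-cone configurations have $\mathbb{S}ec_2(X_P)=\mathbb{P}^N$. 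Finally, Terracini's lemma must be applied at \emph{general} points of $X_P$; ``two generic torus-fixed points'' is not meaningful here, since the torus-fixed points are finitely many and typically singular.
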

\begin{proof}
Clearly, if $X_P$ is a cone or $P$ is contained in the polytope of $V_2^2$ then $X_P$ is $2$-defective. Assume that neither $X_P$ is a cone nor $P$ is contained in the polytope of $V_2^2$. We may assume that $M=\mathbb{Z}^2$, $P$ has at least $6$ points, $A=(0,0),B=(0,1),C=(1,0)\in P$ and $P$ is contained in the first quadrant.

To simplify the notation let us write $D=(2,0),E=(1,1),F=(0,2),\Delta_0=\{A,B,C\}$. We distinguish three cases depending on how many points there are in $P\cap \{D,E,F\}$.

First assume that there are two points $p,q$ in $\{D,E,F\}\cap P$. Then there is at least one point $r\in (P\cap M)\setminus \Delta_2^2$. Hence, using $\Delta_1=\Delta_0, v_1=(-1,-1),\Delta_2=\{p,q,r\}$ and any $v_2$, we see that $X_P$ is not $2$-defective by Theorem \ref{teo:main}.

Now, assume that $\{p\}=\{D,E,F\}\cap P$ has exactly one point. Then there are at least two points $q,r\in (P\cap M)\setminus \Delta_2^2$. If there are such two points making $\Delta_2=\{p,q,r\}$ a simplex we are done as in the previous case. We therefore can assume that all points in $(P\cap M)\setminus \Delta_0$ are collinear. We will prove that $p=E$. Indeed, the points of $P\setminus \Delta_0$ can not all lie in the segment $\{(x,0),x\geq 2\}$ since $X_P$ is not a cone, and similarly they can not all lie on the segment $\{(0,y),y\geq 2\}$. Therefore, there is a point $G=(x,y)\in P$ with $x\geq 1$ and $y\geq 1$. Since $E\in \overline{BCG}$ and $P$ is convex we conclude that $E\in (P\cap M)$.

Now, either the points in $(P\cap M)\setminus \Delta_0$ are contained in the vertical line $\{(1,y),y\geq 1\}$ or 
$q=(x_q,y_q),r=(x_r,y_r)$ for some $2\leq x_q<x_r$ and $1\leq y_q<y_r.$ In the first case we may use
$$\Delta_1=\{A,B,E\}, v_1=(1,-1),\Delta_2=\{(1,3),(1,2),C\}$$
with $v_2$ arbitrary, and in the second case we may use
$$\Delta_1=\{B,q,r\}, v_1=(a,1),\Delta_2=\{A,C,E\}, \mbox{ with } a\gg 0$$
again with $v_2$ arbitrary.

Finally, assume that $\{D,E,F\}\cap (P\cap M)=\emptyset$. Then none of the points of $P\cap M$ lies on the segments $\{(x,0),x\geq 2\}$ and $\{(0,y),y\geq 2\}$ and, as in the second case, we can prove that $E\in (P\cap M)$.
\end{proof}

\begin{Remark}
In higher dimension the analogue of Proposition \ref{2def} does not hold. Consider the polytope $P\subseteq\mathbb{Q}^3$ with vertexes 
$ (0, 0, 1), (1, 0, 2), (0, 2, 1), (2, 2, 1), (1, 1, 0)$.
The lattice points of $P$ are 
$$ (0, 0, 1), (1, 0, 2), (0, 2, 1), (2, 2, 1), 
(1, 1, 0), (1, 1, 1), (1, 2, 1), (0, 1, 1)$$
and hence the corresponding map to a projective space is given by
\stepcounter{thm}
\begin{equation}\label{map}
\begin{array}{ccc}
(\mathbb{C}^{*})^3 & \longrightarrow & \mathbb{P}^7\\
(x,y,z) & \mapsto & (xyz,x^2y^2z,z,xz^2,y^2z,xy,xy^2z,yz)
\end{array}
\end{equation}
Note that $P$ contains $(1,1,1)$ as an interior point, and hence it is not equivalent, modulo ${\rm GL}(3,\mathbb Z)$ and translations, to a polytope contained in the polytope of the degree two Veronese embedding of $\mathbb{P}^3$. Furthermore, $X_P$ is $2$-defective by Terracini's lemma. Now, the singular locus of $X_P$ is the union of seven invariant curves,
which correspond to the singular $2$-dimensional cones of the normal fan, and is stabilized by the action of the torus. Hence it corresponds via (\ref{map}) to the locus stabilized by the action of the torus on $\mathbb{C}^3$. Computing the differential of (\ref{map}) we get that the line $L$ corresponding to the plane $\{z=0\}\subseteq\mathbb{C}^3$ is in the singular locus of $X_P$. Hence, if $X_P$ is a cone, this line must be contained in its vertex. However, a line through a general point of $L$ and the point $(1,\dots,1)\in X_P$ is not entirely contained in $X_P$, and hence $X_P$ can not be a cone. The variety $X_P$ is a Gorenstein canonical toric Fano $3$-fold of degree $10$. Its entry in the \href{http://www.grdb.co.uk/forms/toricf3c}{Graded Ring Database} is 523456.
\end{Remark}

\section{Bounds for Segre-Veronese Varieties}\label{sec:boundsSV}
Let $SV^{n_1,\dots,n_r}_{d_1,\dots,d_r}$ be the Segre-Veronese variety  given as the image, in $\mathbb{P}^N$ with $N = \prod_{i=1}^r\binom{n_i+d_i}{d_i}-1$, of $\mathbb{P}^{n_1}\times\dots\times\mathbb{P}^{n_r}$ under the embedding induced by the complete linear system $\big|\mathcal{O}_{\mathbb{P}^{n_1}\times\dots\times\mathbb{P}^{n_r}}(d_1,\dots, d_r)\big|$. In the following we prove our main result. 

\begin{thm}\label{main_SV}
The Segre-Veronese variety $SV^{n_1,\dots,n_r}_{d_1,\dots,d_r}\subseteq\mathbb{P}^N$ is not $h$-defective for 
$$h\leq \frac{d_j}{n_j+d_j}\frac{1}{1+\sum_{i=1}^r n_i}\prod_{i=1}^r \binom{n_i+d_i}{d_i}$$
where $\frac{n_j}{d_j} = \max_{1\leq i\leq r}\left\lbrace\frac{n_i}{d_i}\right\rbrace$.
\end{thm}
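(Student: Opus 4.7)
The plan is to apply Theorem \ref{hyperprop} to the polytope
$$
P \;=\; \prod_{i=1}^r d_i\,\Delta^{n_i} \;\subseteq\; M_{\mathbb Q}
$$
whose associated toric variety is $SV^{n_1,\dots,n_r}_{d_1,\dots,d_r}$. This polytope has dimension $n = \sum_{i=1}^r n_i$ and $|P\cap M| = \prod_{i=1}^r \binom{n_i+d_i}{d_i}$. The heart of the argument is to compute, or at least sharply bound, the quantity $m$ = maximum number of lattice points of $P$ lying on an affine hyperplane; once $m$ is known, the stated bound on $h$ is a direct substitution into $(|P\cap M|-m)/(n+1)$.

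First I would reduce the hyperplane problem to a facet problem via Proposition \ref{hyp}. Label the coordinates of the $i$-th factor by $x^{(i)}_1,\dots,x^{(i)}_{n_i}$ and, for each pair $(i,k)$ with $1\leq k\leq n_i$, take $v_{ik}$ to be the dual basis vector $e_k^{(i)\,*}$ and $F_{ik}$ to be the facet $\{x^{(i)}_k=0\}\subseteq P$. There are $n = \sum_i n_i$ such pairs and the $v_{ik}$ are linearly independent. The hypothesis of Proposition \ref{hyp} is immediate: if $(i',k')\neq (i,k)$, then on $F_{ik}$ the coordinate $x^{(i')}_{k'}$ still ranges over $\{0,1,\dots,d_{i'}\}$ (either $i'\neq i$, so $F_{ik}$ is a full copy of $d_{i'}\Delta^{n_{i'}}$ in the $i'$-th factor, or $i'=i$ and we can set all other $x^{(i)}_\ell=0$). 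Thus any hyperplane section has at most as many lattice points as some facet $F_{ik}$.

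Next I would compute $|F_{ik}\cap M|$. Since $F_{ik} \;=\; \big(d_i\Delta^{n_i-1}\big)\times\prod_{j\neq i} d_j\Delta^{n_j}$ (and the same count holds for the remaining facet $\sum_k x^{(i)}_k=d_i$ of $d_i\Delta^{n_i}$), one gets
$$
|F_{ik}\cap M|\;=\;\binom{n_i-1+d_i}{d_i}\prod_{j\neq i}\binom{n_j+d_j}{d_j}\;=\;\frac{n_i}{n_i+d_i}\prod_{l=1}^r\binom{n_l+d_l}{d_l}.
$$
Since the function $x\mapsto x/(x+1)$ is increasing, $\frac{n_i}{n_i+d_i}$ is maximized at the index $j$ where $n_i/d_i$ is largest, so
$$
m \;=\; \frac{n_j}{n_j+d_j}\prod_{l=1}^r\binom{n_l+d_l}{d_l}.
$$

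Finally, plugging into Theorem \ref{hyperprop} gives
$$
\frac{|P\cap M|-m}{n+1}\;=\;\frac{1-\tfrac{n_j}{n_j+d_j}}{1+\sum_i n_i}\prod_{l=1}^r\binom{n_l+d_l}{d_l}\;=\;\frac{d_j}{n_j+d_j}\cdot\frac{1}{1+\sum_i n_i}\prod_{l=1}^r\binom{n_l+d_l}{d_l},
$$
which is exactly the claimed bound. I do not expect a serious obstacle: the work essentially consists of correctly identifying the product structure of facets and performing the maximization. The only slightly subtle point is remembering that both types of facets of a simplex $d_i\Delta^{n_i}$ (the coordinate facets and the hypotenuse facet) have the same lattice-point count $\binom{n_i-1+d_i}{d_i}$, so no better facet is missed, and that Proposition \ref{hyp} indeed applies with the natural coordinate data above.
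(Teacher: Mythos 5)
Your proposal is correct and follows essentially the same route as the paper: take $P=\prod_i d_i\Delta^{n_i}$, use Proposition \ref{hyp} to reduce the hyperplane count to a facet count, compute $|F\cap M|=\frac{n_i}{n_i+d_i}\prod_l\binom{n_l+d_l}{d_l}$, maximize over the index with largest $n_i/d_i$, and substitute into Theorem \ref{hyperprop}. You even supply slightly more detail than the paper does on verifying the hypothesis of Proposition \ref{hyp} and on the equality of lattice-point counts for the two types of facets of a dilated simplex.
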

\begin{proof}
Let $\Delta_{d_i}^{n_i}\subseteq\mathbb{Q}^{n_i+1}$ be the standard simplex. The polytope $P = \Delta_{d_1}^{n_1}\times\cdots\times \Delta_{d_r}^{n_r}$ has 
$$\prod_{i=1}^r\binom{d_i+n_i}{d_i}$$
integer points, and each facet is given by the Cartesian product of a facet of one of the $\Delta_{d_j}^{n_j}$ and the remaining $\Delta_{d_i}^{n_i}$ for $i\neq j$. Therefore, each facet contains
$$f_j=\binom{d_j+n_j-1}{d_j} \prod_{i\neq j}^r\binom{d_i+n_i}{d_i}$$ 
points for some $j$. Now, we compare the number of integer points on each facet:
\begin{align*}
f_j& \leq f_k\\
\binom{d_j+n_j-1}{d_j} \prod_{i\neq j}^r\binom{d_i+n_i}{d_i}  &  \leq 
\binom{d_k+n_k-1}{d_k} \prod_{i\neq k}^r\binom{d_i+n_i}{d_i}  \\
\binom{d_j+n_j-1}{d_j}\binom{d_k+n_k}{d_k} & \leq \binom{d_k+n_k-1}{d_k}  \binom{d_j+n_j}{d_j}\\
\dfrac{d_k+n_k}{n_k} & \leq 
\dfrac{d_j+n_j}{n_j} \\
\dfrac{d_k}{n_k} & \leq \dfrac{d_j}{n_j}
\end{align*}
Therefore the facet with maximum number of integer points is the one which minimizes $\frac{d_i}{n_i}$ that is maximizes $\frac{n_i}{d_i}$. Assume that $\frac{n_j}{d_j}=\max_{1\leq i\leq r}\left\lbrace\frac{n_i}{d_i}\right\rbrace$.

Since $P$ satisfies the conditions in Proposition \ref{hyp} the maximum number of integer points in a hyperplane section of $P$ is attained on a facet and in this case it is given by
$$\binom{d_j+n_j-1}{d_j} \prod_{i\neq j}^r\binom{d_i+n_i}{d_i}$$
Finally, to conclude it is enough to note that
\begin{align*}
&\frac{1}{1+\sum_in_i}
\left(
 \prod_{i=1}^r\binom{d_i+n_i}{d_i} 
 - 
 \binom{d_j+n_j-1}{d_j} \prod_{i\neq j}^r\binom{d_i+n_i}{d_i} \right)
 \\ & =  
 \frac{1}{1+\sum_in_i}
\binom{d_j+n_j-1}{d_j-1} \prod_{i\neq j}^r\binom{d_i+n_i}{d_i} 
 \\ & =   
 \frac{1}{1+\sum_in_i}
 \dfrac{d_j}{d_j+n_j}
 \prod_{i=1}^r\binom{d_i+n_i}{d_i} 
 \\ & =   
 \dfrac{1}{1+\dfrac{n_j}{d_j}}
 \frac{1}{1+\sum_in_i}
 \prod_{i=1}^r\binom{d_i+n_i}{d_i} 
\end{align*}
and to apply Theorem \ref{hyperprop}.
\end{proof}

\begin{Remark}
According to Theorem \ref{main_SV} we have a polynomial bound of degree $\sum_i n_i$ in the $d_i$, while
in the $n_i$ we have a polynomial bound of degree $\sum_id_i-2$.

A bound for non secant defectiveness of Segre varieties was given in \cite[Theorem 1.1]{Ge13} using the inductive machinery developed in \cite{AOP09}. When the numbers $n_i+1$ are powers of two \cite[Corollary 5.1]{Ge13} gives a sharp asymptotic bound for non secant defectiveness of Segre varieties. However, for general values of the $n_i$ the bound in \cite[Theorem 1.1]{Ge13} tends to zero when $r$ goes to infinity.
\end{Remark}

\begin{Proposition}\label{prop:twofactors}
The Segre-Veronese variety $SV^{1,n}_{2k+1,2}$ is not defective. Furthermore, $SV^{1,n}_{2k,2}$ is not $h$-defective for $h\leq k(n+1)$. 
\end{Proposition}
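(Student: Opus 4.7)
The plan is to apply Theorem \ref{teo:main} to the polytope $P=\Delta_d^1\times\Delta_2^n$ of the Segre--Veronese variety $SV^{1,n}_{d,2}$, via an explicit combinatorial construction.

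I would first slice $P$ into thin slabs $P_j:=[2j,2j+1]\times\Delta_2^n$, taking $j=0,\dots,k$ when $d=2k+1$ (so the slabs exhaust $P$) and $j=0,\dots,k-1$ when $d=2k$ (leaving the column $\{2k\}\times\Delta_2^n$ unused). Each slab contains $(n+1)(n+2)$ lattice points, and I partition these into $n+1$ affinely independent simplexes of cardinality $n+2$. Identifying the lattice points of $\Delta_2^n$ with unordered pairs $(i,j)$, $0\le i\le j\le n$, via $(i,j)\leftrightarrow \epsilon_i+\epsilon_j$ with $\epsilon_0:=0$ and $\epsilon_1,\dots,\epsilon_n$ the standard basis of $\mathbb{Z}^n$, I set, for $\ell=0,\dots,n$,
$$
S_\ell^{(j)} := \bigl\{(2j,\epsilon_\ell+\epsilon_m)\,:\,\ell\le m\le n\bigr\}\cup\bigl\{(2j+1,\epsilon_m+\epsilon_\ell)\,:\,0\le m\le \ell\bigr\}.
$$
Each $S_\ell^{(j)}$ has $(n-\ell+1)+(\ell+1)=n+2$ points; the $S_\ell^{(j)}$ are pairwise disjoint, because on the lower layer $S_\ell^{(j)}$ uses exactly the pairs whose smaller index is $\ell$, and on the upper layer exactly the pairs whose larger index is $\ell$; their union is $P_j\cap M$, since each pair $(i,j)$ with $i\le j$ occurs once on the lower layer (with smaller index $i$) and once on the upper layer (with larger index $j$). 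Affine independence of each $S_\ell^{(j)}$ in $\mathbb{Z}^{n+1}$ follows from a direct check: subtracting the pivot $(2j,2\epsilon_\ell)$ yields $n+1$ linearly independent difference vectors.

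For the separating vectors I take $v_{j,\ell}=(-A,w_\ell)\in\mathbb{Z}^{n+1}$, where $A$ is a large positive integer and $w_\ell\in\mathbb{Z}^n$ is a spatial weight tailored to $\ell$. Concretely, for $\ell\ge 1$ I set $w_\ell:=B\,\epsilon_\ell^*+C\sum_{1\le i<\ell}\epsilon_i^*-D\sum_{i>\ell}\epsilon_i^*$ with $B\gg C,D>0$, and $w_0$ penalizes all spatial coordinates uniformly. The large $-A$ on the $t$-coordinate ensures that any simplex having a point outside the current slab $P_j$, or using fewer than the maximum possible number of points on the lower layer of $P_j$, attains a strictly smaller value of $\langle\,\cdot\,,v_{j,\ell}\rangle$ than $S_\ell^{(j)}$. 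After removing the earlier simplexes $S_0^{(j)},\dots,S_{\ell-1}^{(j)}$ in slab $P_j$, the remaining lower layer $\{\epsilon_i+\epsilon_m:\ell\le i\le m\le n\}$ spans an affine subspace of dimension $n-\ell$, so the maximum achievable number of lower-layer points in a simplex equals $n-\ell+1$, realized by $S_\ell^{(j)}$. The weight $w_\ell$ then uniquely selects $S_\ell^{(j)}$ among all such maximal configurations: on the lower layer it favors the $n-\ell+1$ points with $x_\ell\ge 1$ (the pairs $(\ell,m)$), and on the upper layer it picks the $\ell+1$ pairs $(m,\ell)$ with $m\le\ell$, rewarding low-indexed coordinates via the $C$-term and penalizing high-indexed ones via the $D$-term.

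The main obstacle is the combinatorial bookkeeping verifying that the greedy top-$(n-\ell+1)$ choice on the lower layer and top-$(\ell+1)$ choice on the upper layer, ranked by $w_\ell$, coincide uniquely with the corresponding parts of $S_\ell^{(j)}$; this reduces to a case analysis of the remaining pairs grouped by their $x_\ell$-value, using $B\gg C,D$ to guarantee strict inequalities. Once this is in place, Theorem \ref{teo:main} closes both cases: when $d=2k+1$, the $(k+1)(n+1)$ simplexes $S_\ell^{(j)}$ partition $P\cap M$, and the ``in particular'' clause yields that $SV^{1,n}_{2k+1,2}$ is not defective; when $d=2k$, the constructed $k(n+1)$ simplexes satisfy the separation condition inside their own union, and since the uniqueness required by Definition \ref{def:sep} is preserved under restriction to any subset of the ambient point set, the theorem applies to each $h\le k(n+1)$, proving that $SV^{1,n}_{2k,2}$ is not $h$-defective in that range.
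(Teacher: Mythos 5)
Your proposal is correct and follows essentially the same route as the paper's proof: the same prism polytope $\Delta_d^1\times\Delta_2^n$ sliced into pairs of floors, the same staircase triangulation of each slab into $n+1$ simplexes of size $n+2$, lexicographically weighted separating vectors with a dominant weight on the $\mathbb{P}^1$-direction, and the same application of Theorem \ref{teo:main}, with the even case handled by discarding the last floor. The paper's simplexes $S_1,\dots,S_{n+1}$ and vectors $v_i=(b_1,b_{i+1},\dots,b_2,0,\dots,0)$ are just a mirror-image relabeling of your $S_\ell^{(j)}$ and $v_{j,\ell}$.
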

\begin{proof}
Let us begin with $SV^{1,n}_{2k+1,2}$. The corresponding polytope is $P=\Delta_{2k+1}^1 \times \Delta_2^{n}$ where
$$\Delta_{2k+1}^1=\{0,1,\dots,2k+1\}
 \mbox{ and }
\Delta_2^n=\{(x_1,\dots,x_n)\in \mathbb{Z}_{\geq 0}; \textstyle\sum x_j\leq 2\}$$
We view $P$ as a union of $2k+2$ floors labeled by $\Delta_{2k+1}^1$. We will triangulate each pair of floors. Note that it is enough to do this in the case $k=0$ where we have just two floors.

Consider the following disjoint subsets of $P$
$$
\begin{array}{cll}
S_1 & = & \{e_1+e_2\}\cup\{e_1+e_2+e_j;j=2\dots n+1\}\cup 
\{e_2+e_2\} \\ 
S_2 & =  & \{e_1+e_3\}\cup\{e_1+e_3+e_j;j=3\dots n+1\} \cup 
\{ e_3+e_j; j=2,3\} \\ 
\vdots &  &  \\ 
S_n & = & \{e_1+e_{n+1}\}\cup\{e_1+e_{n+1}+e_{n+1}\}\cup
\{e_{n+1}+e_j;j=2,\dots n+1\} \\ 
S_{n+1} & = & \{(0,\dots,0)\} \cup \{e_j; j=1\dots n+1\} 
\end{array} 
$$
Note that each set $S_i$ has cardinality $n+2$ and since $|P| = 2\binom{n+2}{2}=(n+1)(n+2)$ we have $P = \bigcup_{i=1}^{n+1}S_i$. Moreover each $S_i$ is an $(n+1)$-simplex in $\mathbb{Q}^{n+1}$.
 
Now, consider integers 
$$b_1 \gg b_2 \gg\dots \gg b_{n+1}>0$$
and vectors
$$
\begin{array}{cll}
v_1 & = & (b_1,b_2,0,\dots,0) \\ 
v_2 & =  & (b_1,b_3,b_2,0,\dots,0)\\ 
v_3 & = & (b_1,b_4,b_3,b_2,0,\dots,0)\\
\vdots & &  \\ 
v_n & = & (b_1,b_{n+1},\dots,b_2)\\ 
v_{n+1} & = & (1,1,\dots,1) 
\end{array} 
$$

We will show that these vectors and simplexes make Theorem~\ref{teo:main} work. In the first step in order to maximize $\left\langle b(\Delta),v_1\right\rangle$ we need that $\Delta$ has the maximum possible number of points on the top floor, corresponding to $e_1$. Furthermore, since $e_2$ appears in all the vectors of $S_1$ and $b_2\gg b_3\dots \gg b_{n+1}$ among the simplexes having $n+1$ points on the top floor the one maximizing $\langle b(\Delta),v_1\rangle$ is $S_1$. Therefore, $v_1$ separates $S_1$. 

Now, note that the remaining points on the top floor are exactly the ones in the hyperplane $x_2=0$. Then, among the simplexes with points in $S\setminus S_1$ the ones maximizing $\langle b(\Delta), v_2\rangle$ must have $n$ points on the top floor and two on the bottom floor. Since $b_2\gg b_3$ the points on the top floor must have the third coordinate non zero, and since there are exactly $n$ of these we have to take all of them. By the same argument on the bottom floor we have to take $(0,1,1,0,\dots,0)$ and $(0,0,2,0,\dots,0)$. Hence, $v_2$ separates $S_2$.

Now, the remaining points on the top floor are in the linear space $x_2=x_3=0$. Arguing similarly we see that $v_1,\dots,v_n$ separate $S_1,\dots, S_n$. In the last step there are just $n+2$ points left and these form a simplex. Setting $S_{n+1}=\Delta \setminus \cup_{i=1}^n S_i$ any vector $v_{n+1}$ will do.

Therefore, for each pair of floors we construct $n+1$ simplexes and since we have $k+1$ pairs of floors Theorem~\ref{teo:main} yields that $SV^{1,n}_{2k+1,2}\subseteq\mathbb{P}^N$ is not $h$-defective for $h\leq (k+1)(n+1)$. Then 
$$\dim\Sec_{(k+1)(n+1)}(SV^{1,n}_{2k+1,2}) = (k+1)(n+1)^2+(k+1)(n+1)-1 = (k+1)(n+1)(n+2)-1 = N$$
and $SV^{1,n}_{2k+1,2}\subseteq\mathbb{P}^N$ is not defective. 

Now, consider $SV^{1,n}_{2k,2}$. In this case we have $2k+1$ floors. Considering just the first $2k$ of them and arguing as in the previous case we get that $SV^{1,n}_{2k,2}$ is not $h$-defective for $h\leq k(n+1)$. 
\end{proof}

\begin{Remark}
The non secant defectiveness of $SV^{1,n}_{2k+1,2}$ was proven, by different methods, in \cite[
Proposition 3.1]{Ab08}. Furthermore, by \cite[Proposition 3.2]{Ab08} $SV^{1,n}_{2k,2}$ is $h$-defective for $k(n+1)+1\leq h\leq k(n+1)+n$.
\end{Remark}

\subsection{Identifiability}
Let $X\subseteq\mathbb{P}^N$ be an irreducible non-degenerated variety. A point $p\in\mathbb{P}^N$ is said to be $h$-identifiable, with respect to $X$, if it lies on a unique $(h-1)$-plane $h$-secant to $X$. We say that $X$ is $h$-identifiable if the general point of $\Sec_h(X)$ is $h$-identifiable.

\begin{Corollary}\label{Id_Toric}
Let $P\subseteq M_{\mathbb{Q}}$ be a full-dimensional lattice polytope, $X_P$ the corresponding $n$-dimensional toric variety, and $m$ the maximum number of points on a hyperplane section of $P\cap M$. Assume that $2n < \frac{|P\cap M|-m}{n+1}$. Then, for
$$h\leq\dfrac{|P\cap M|-m}{n+1}$$
$X_P$ is $(h-1)$-identifiable. 
\end{Corollary}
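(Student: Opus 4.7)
\medskip

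\noindent\textbf{Proof plan.} The plan is to deduce $(h-1)$-identifiability from the non-$h$-defectivity already granted by Theorem \ref{hyperprop}, via a standard bridge upgrading non-defectivity to identifiability under a dimensional slack condition, with the hypothesis $2n < \frac{|P\cap M|-m}{n+1}$ supplying precisely that slack.

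The starting point is this: by Theorem \ref{hyperprop}, the bound on $h$ yields that $X_P$ is not $h$-defective. To promote this to identifiability of $\Sec_{h-1}(X_P)$, I would invoke a Chiantini--Ciliberto type criterion, namely that under non-$h$-defectivity, $X_P$ is $(h-1)$-identifiable provided that for $h-1$ general points $q_1,\dots,q_{h-1}\in X_P$ the projection from the linear span $\Lambda := \langle T_{q_1}X_P,\dots,T_{q_{h-1}}X_P\rangle$ is generically finite on $X_P$; equivalently, the general tangential contact locus of $X_P$ relative to $\Lambda$ is $0$-dimensional.

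I would verify this condition via the triangulation machinery. Using the proof of Theorem \ref{hyperprop}, pick $h$ separated simplices $\Delta_1,\dots,\Delta_h$ with separating vectors $v_1,\dots,v_h$ and set $q_i := \Gamma_{v_i}(a)$ for $a$ large. Non-$h$-defectivity applied to this configuration forces $T_{q_h}X_P \cap \Lambda = 0$, so the projection from $\Lambda$ is an immersion near $q_h$. The slack hypothesis $2n < \frac{|P\cap M|-m}{n+1}$ guarantees that after reserving $(n+1)h$ lattice points for $\Delta_1,\dots,\Delta_h$, there remain more than $2n$ lattice points of $P\cap M$ which are not contained in any hyperplane. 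Those extra points let us repeat the construction with an additional separated simplex targeting any chosen general $q\in X_P$ via a torus translate $\Gamma_v(a)$, obtaining the analogous transversality statement at $q$. Irreducibility of $X_P$ then promotes this pointwise injectivity into generic injectivity of the projection, which is the desired finiteness of the contact locus.

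The main obstacle is the bridge itself: non-defectivity is a dimension statement, whereas identifiability is a uniqueness statement, and closing the gap means ruling out positive-dimensional components of the tangential contact locus. The cleanest route is to invoke a published identifiability criterion (of Chiantini--Ciliberto non-weak-defectivity type) as a black box and reduce its hypotheses to a slight strengthening of Theorem \ref{hyperprop}: namely, that the construction of $h$ separated simplices can be carried out with one \emph{additional} separated simplex to witness injectivity at a further general point. This is precisely the content of the inequality $2n < \frac{|P\cap M|-m}{n+1}$, since $n+1$ extra lattice points supply a witness simplex and the remaining $n$ extra points provide the freedom to move the witness throughout $X_P$.
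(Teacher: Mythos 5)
Your overall strategy --- non-$h$-defectivity from Theorem \ref{hyperprop} plus a published ``non-defectivity implies identifiability'' bridge --- is exactly the paper's strategy: the proof there is a one-line application of Theorem \ref{hyperprop} together with \cite[Theorem 3]{CM19} (Casarotti--Mella). However, you misread the role of the hypothesis $2n < \frac{|P\cap M|-m}{n+1}$. In the paper this inequality is simply the numerical hypothesis of the Casarotti--Mella theorem, which compares the number of points $h$ with \emph{twice the dimension} of $X_P$; it has nothing to do with leftover lattice points. Your claim that ``after reserving $(n+1)h$ lattice points for $\Delta_1,\dots,\Delta_h$, there remain more than $2n$ lattice points of $P\cap M$ which are not contained in any hyperplane'' is false in general: when $h$ is at the maximum allowed value the leftover set has roughly $m$ points and may very well lie entirely on a hyperplane --- that is precisely why the recursion in Theorem \ref{hyperprop} stops there. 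So the ``witness simplex'' you want to separate at the last step need not exist.

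There is a second, more structural gap. Ruling out positive-dimensional tangential contact loci is the genuinely hard content of the bridge, and your argument for it --- transversality of $T_{q_h}X_P$ with $\Lambda$ at one special point, upgraded by ``irreducibility'' to generic injectivity of the projection --- does not close it: an immersion condition at a general point of $X_P$ does not bound the dimension of the locus where a general hyperplane tangent at $q_1,\dots,q_{h-1}$ remains tangent, which is what weak defectivity measures. You correctly sense that the clean route is to cite a criterion as a black box, but the criterion you then need is exactly \cite[Theorem 3]{CM19}, whose hypotheses are non-$h$-defectivity together with the purely numerical condition $h>2\dim X_P$ (guaranteed in the relevant range by the assumed inequality); no separate verification of a contact-locus or finite-projection condition via extra simplices is required, and the one you propose does not work.
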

\begin{proof}
It is enough to apply Theorem \ref{hyperprop} and \cite[Theorem 3]{CM19}.
\end{proof}

\begin{Corollary}\label{Id_SV}
Consider the Segre-Veronese variety $SV^{n_1,\dots,n_r}_{d_1,\dots,d_r}\subseteq\mathbb{P}^N$. Set $\frac{n_j}{d_j} = \max_{1\leq i\leq r}\left\lbrace\frac{n_i}{d_i}\right\rbrace$, and assume that $2\sum_{i=1}^r n_i < \frac{d_j}{n_j+d_j}\frac{1}{1+\sum_{i=1}^r n_i}\prod_{i=1}^r \binom{n_i+d_i}{d_i}$. Then, for  
$$h\leq \frac{d_j}{n_j+d_j}\frac{1}{1+\sum_{i=1}^r n_i}\prod_{i=1}^r \binom{n_i+d_i}{d_i}$$
$SV^{n_1,\dots,n_r}_{d_1,\dots,d_r}\subseteq\mathbb{P}^N$ is $(h-1)$-identifiable. 
\end{Corollary}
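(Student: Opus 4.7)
The plan is to realize the Segre-Veronese variety $SV^{n_1,\dots,n_r}_{d_1,\dots,d_r}$ as the toric variety $X_P$ associated to the product of simplices
\[
P = \Delta_{d_1}^{n_1}\times\cdots\times\Delta_{d_r}^{n_r}\subseteq M_{\mathbb Q},
\]
and then to invoke the toric identifiability statement of Corollary \ref{Id_Toric} directly. Under this identification the dimension is $n = \sum_{i=1}^r n_i$ and the number of lattice points is $|P\cap M| = \prod_{i=1}^r\binom{n_i+d_i}{d_i}$.

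The first step is to import the hyperplane-section estimate already worked out inside the proof of Theorem \ref{main_SV}. Since $P$ is a product of simplices it satisfies the hypothesis of Proposition \ref{hyp}, so the maximum number of integer points of $P$ contained in a hyperplane is attained on a facet, and choosing the facet coming from the index $j$ minimizing $d_i/n_i$ (equivalently, maximizing $n_i/d_i$) yields
\[
m = \binom{d_j+n_j-1}{d_j}\prod_{i\neq j}\binom{d_i+n_i}{d_i}.
\]

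The second step is the algebraic identity already extracted in the proof of Theorem \ref{main_SV}:
\[
\frac{|P\cap M|-m}{n+1}
\;=\;
\frac{1}{1+\sum_{i=1}^r n_i}\binom{d_j+n_j-1}{d_j-1}\prod_{i\neq j}\binom{d_i+n_i}{d_i}
\;=\;
\frac{d_j}{n_j+d_j}\frac{1}{1+\sum_{i=1}^r n_i}\prod_{i=1}^r\binom{n_i+d_i}{d_i}.
\]
Consequently the numerical hypothesis of Corollary \ref{Id_Toric}, namely $2n<\frac{|P\cap M|-m}{n+1}$, translates verbatim into the assumption of the present statement, and the bound on $h$ translates to the bound given here.

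The final step is to apply Corollary \ref{Id_Toric} to $X_P = SV^{n_1,\dots,n_r}_{d_1,\dots,d_r}$, which yields $(h-1)$-identifiability for every $h$ satisfying the displayed inequality. There is no real obstacle beyond bookkeeping, since the non-defectiveness content is already supplied by Theorem \ref{main_SV} and the passage from non-defectiveness to identifiability is handled by the general criterion \cite[Theorem 3]{CM19} invoked through Corollary \ref{Id_Toric}; the only thing to check is that the two numerical conditions correspond under the identification $n=\sum_i n_i$ and $|P\cap M|-m$ as above, which is exactly what step two provides.
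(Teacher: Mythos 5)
Your proposal is correct and follows essentially the same route as the paper: the paper's proof simply combines Theorem \ref{main_SV} with \cite[Theorem 3]{CM19}, and your detour through Corollary \ref{Id_Toric} unpacks exactly the same two ingredients (the hyperplane-section count on $P=\Delta_{d_1}^{n_1}\times\cdots\times\Delta_{d_r}^{n_r}$ giving non-defectiveness, plus the identifiability criterion of \cite{CM19}), with the numerical identity you verify being the computation already carried out in the proof of Theorem \ref{main_SV}. No gap; the extra bookkeeping is harmless.
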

\begin{proof}
It is enough to apply Theorem \ref{main_SV} and \cite[Theorem 3]{CM19}.
\end{proof}

\section{A Magma script}\label{Magma}
A Magma library which implements the
following algorithm can be downloaded
at the following link:

\begin{center}
\url{https://github.com/alaface/secant-algorithm}
\end{center}

In this section we present an algorithmic implementation of 
Theorem \ref{teo:main} and show what kind of results it can 
provide for Segre-Veronese varieties. 
In what folows $M\simeq\mathbb Z^n$ and
$N := {\rm Hom}(M,\mathbb Z)$ is its dual.
Denote by $M_{\mathbb Q}$ the corresponding 
rational vector space. Giving a subset
$S\subseteq M_{\mathbb Q}$ we say that
$S$ is independent if it is affinely 
independent and we say that it is 
full-dimensional if its affine span is the 
whole space.
Our main algorithm is the following.\\

\begin{tcolorbox}
\begin{algorithm}[H]
 \SetKwInOut{Input}{Input}
 \SetKwInOut{Output}{Output}
 \Input{a finite, full-dimensional subset $S\subseteq M$}
 \While{$S$ is full-dimensional}{
choose $v\in N_{\mathbb Q}$ such that 
$\varphi_v$ is injective on $S$\; 
reorder $S$ increasingly according 
to $\varphi_v$\;
define $\Delta := \{\max(S)\}$\;
 \Repeat{$\Delta$ is full-dimensional}{
 $x := \max\{u\in S\setminus \Delta\, 
:\, \Delta\cup\{ u\}\text{ is independent}\}$\;
$\Delta := \Delta\cup\{x\}$\;
 }
 $S := S\setminus\Delta$;
 }
  \lIf{$S$ is independent}{
   \Return false
   } \lElse {
   \Return true
  }
\end{algorithm}
\end{tcolorbox}

This procedure can show that a toric variety is not defective but can not determine whether it is defective. Furthermore, some details must be considered.
That is if the output is {\tt false} then 
all the secant varieties of the toric variety $X_S$ 
are not defective. On the other hand there is no 
guarantee that if the output is {\tt true} then $X_S$
admits a defective $r$-secant variety for some $r$.
Due to this we sometimes apply the above algorithm
several times to improve the possibility of getting
a correct result in case the output is {\tt true}.
We where able to use an implementation of this algorithm in MAGMA \cite{Magma97} in order to find several non defective Segre-Veronese varieties.

First we look at Segre-Veronese varieties with two factors $SV_{(d_1,d_2)}^{(n_1,n_2)}$. We assume that $n_1\leq n_2$ and $n_2>1$ since by \cite[Theorem 2.2]{LP13} $SV_{(d_1,d_2)}^{(1,1)}$ is defective if and only if $d_1=2$ and $d_2$ is even. We also assume that $(d_1,d_2)\neq (1,1)$ since Segre varieties with two factors are almost always defective.

If either $n_1=1$ or $n_1=2$ we get the results listed in Tables \ref{tab:P1Pn} and \ref{tab:P2Pn}. The only cases where the script was unable to prove the non defectiveness are the already known ones \cite[Conjecture 5.5 (b)(d)]{AB13} and
\cite[Conjecture 5.5 (a)(c)(e)]{AB13} respectively.
\begin{table}[h!]
\begin{tabular}{c|c|c|c}
$(n_1,n_2)$ & $(d_1,d_2)\neq (1,1)$ & known defective cases & possible new defective cases\\
\hline
$(1,2)$     &  $d_1+d_2\leq 40$         &                $(1,3),(2k,2), 1\leq k \leq 19$ &                                   none \\ \hline
$(1,3)$     &   $d_1+d_2\leq 20$            &                $(2k,2), 1\leq k \leq 9$  &                                   none \\ \hline
$(1,4)$     &   $d_1+d_2\leq 10$           &                $(2k,2), 1\leq k \leq 4$ &                                  none \\ \hline
$(1,5)$     &   $d_1+d_2\leq 9$          &                $(2k,2), 1\leq k \leq 3$&                                              none\\ \hline
$(1,6)$     &   $d_1+d_2\leq 5$           &                $(2,2)$ & none \\ \hline
$(1,7)$     &   $d_1+d_2\leq 3$           &                none &     none                              
\end{tabular}\caption{\footnotesize{Script results for $SV_{(d_1,d_2)}^{(1,n_2)}$}}\label{tab:P1Pn}
\end{table}

\begin{table}[h!]
\begin{tabular}{c|c|c|c}
$(n_1,n_2)$ & $(d_1,d_2)\neq (1,1)$ & known defective cases & possible new defective cases\\
\hline
$(2,2)$     &  $d_1+d_2\leq 23$         &                $(2,2)$ &                                   none \\ \hline
$(2,3)$     &   $d_1+d_2\leq 10$            &                $(1,2),(2,2)$  &                                   none\\ \hline
$(2,4)$     &   $d_1+d_2\leq 6$           &                $(2,2)$ &                                  none \\ \hline
$(2,5)$     &   $d_1+d_2\leq 4$          &                $(1,2),(2,1),(2,2)$&                                              none \\ \hline
$(2,6)$     &   $d_1+d_2\leq 3$           &                $(2,1)$ & none                         
\end{tabular}\caption{\footnotesize{Script results for $SV_{(d_1,d_2)}^{(2,n_2)}$}}\label{tab:P2Pn}
\end{table}

For $3\leq n_1\leq 4, n_1\leq n_2 \leq 5$ we found six cases, listed in Table \ref{tab:PnPm}, where the computer was unable to check whether the corresponding Segre-Veronese variety is defective or not. Again these cases already appeared in the literature \cite[Conjecture 5.5 (c)(e)]{AB13}.

\begin{table}[h!]
\begin{tabular}{c|c|c|c}
$(n_1,n_2)$ & $(d_1,d_2)\neq (1,1)$ &known defective cases & possible new defective cases\\
\hline
$(3,3)$     &  $d_1+d_2\leq 8$         &                $(2,2)$ &                                   none \\ \hline
$(3,4)$     &   $d_1+d_2\leq 5$            &                $(2,1),(2,2)$  &                                   none \\ \hline
$(3,5)$     &   $d_1+d_2\leq 4$           &                $(2,2),(3,1)$ &                                  none \\ \hline
$(4,4)$     &   $d_1+d_2\leq 5$          &                $(2,2)$&                                              none \\ \hline
$(4,5)$     &   $d_1+d_2\leq 3$           &                none & none
\end{tabular}\caption{\footnotesize{Script results for $SV_{(d_1,d_2)}^{(n_1,n_2)}, 3\leq n_1\leq 4, n_1\leq n_2 \leq 5$}}\label{tab:PnPm}
\end{table}

Now, we proceed with Segre-Veronese varieties with three factors $SV_{(d_1,d_2,d_3)}^{(n_1,n_2,n_3)}$. We assume that $n_1\leq n_2\leq n_3$ and $n_3>1$ since \cite[Theorem 2.2]{LP13} classifies defective products of $\mathbb{P}^1$. If $n_1=n_2$ we assume that $d_1\leq d_2$ and similarly for $n_2=n_3$ we assume that $d_2\leq d_3$. By \cite{CGG05} the following Segre-Veronese varieties are defective:$$ 
SV_{(1,1,2)}^{(1,1,2)}, SV_{(1,1,2)}^{(1,1,3)}, SV_{(1,1,2)}^{(1,1,4)}, SV_{(1,1,2)}^{(1,1,5)}, SV_{(1,1,2)}^{(1,1,6)}, SV_{(2,2,2)}^{(1,1,2)}, SV_{(2,2,2)}^{(1,1,3)},
SV_{(1,3,1)}^{(1,1,2)}, SV_{(1,4,1)}^{(1,1,3)},
SV_{(1,5,1)}^{(1,1,4)}$$
$$ 
SV_{(2k,1,1)}^{(1,2,2)}, SV_{(5,1,1)}^{(1,2,3)},
SV_{(6,1,1)}^{(1,2,4)}, SV_{(2k,1,1)}^{(1,3,3)},
SV_{(2,1,1)}^{(2,2,2)}, SV_{(2,1,1)}^{(2,3,3)}
$$

The following ones are also defective  by \cite[Theorem 2.4]{CGG08} since they are unbalanced:
$$
SV_{(1,1,1)}^{(1,1,3)}, SV_{(1,1,1)}^{(1,1,4)}, SV_{(1,1,1)}^{(1,1,5)}, SV_{(1,2,1)}^{(1,1,5)}, SV_{(1,1,1)}^{(1,2,4)}, SV_{(1,1,1)}^{(1,1,6)}, SV_{(1,2,1)}^{(1,1,6)}, SV_{(1,1,1)}^{(1,2,5)}$$

The variety $SV_{(1,1,1)}^{(2,2,2)}$ is defective by \cite[Theorem 3.1]{LM08} and $SV_{(1,1,1)}^{(2,3,3)}$ is defective by \cite[Proposition 4.10]{AOP09}. 
In Tables \ref{tab:P1P1Pn}, \ref{tab:P1PnPm} and \ref{tab:P2PnPm} we present the results found for Segre-Veronese of three factors. We were unable to check, using our script, whether the following Segre-Veronese varieties 
$$SV_{(1,5,1)}^{(1,1,2)},
SV_{(1,8,1)}^{(1,1,2)},SV_{(1,10,1)}^{(1,1,2)},
SV_{(1,3,1)}^{(1,1,3)},SV_{(1,6,1)}^{(1,1,3)},
SV_{(1,7,1)}^{(1,1,3)},SV_{(1,9,1)}^{(1,1,3)},
SV_{(1,4,1)}^{(1,1,4)},SV_{(1,7,1)}^{(1,1,4)}$$
$$ 
SV_{(1,4,1)}^{(1,1,5)},
SV_{(1,5,1)}^{(1,1,5)},SV_{(2,1,1)}^{(1,2,3)},SV_{(3,1,1)}^{(1,2,3)},
SV_{(7,1,1)}^{(1,2,3)},SV_{(3,1,1)}^{(1,2,4)},
SV_{(5,1,1)}^{(1,2,4)},
SV_{(2,1,1)}^{(1,3,4)}.$$
are defective or not.

\begin{table}[h!]
\begin{tabular}{c|c|c|c|c}
$(n_1,n_2,n_3)$ & $(d_1,d_2,d_3), d_1\leq d_2$ & known defective cases & \makecell{ possible new\\ defective cases} & 
\makecell{ new\\ defective cases} \\
\hline
$(1,1,2)$     &  $d_1+d_2+d_3\leq 13$         &  
 $(1,1,2),(1,3,1),(2,2,2)$ & none &                                   \makecell{ $(1,5,1),(1,8,1)$\\ $(1,10,1)$} \\ \hline
  $(1,1,3)$     &  $d_1+d_2+d_3\leq 11$         & \makecell{ $(1,1,1),(1,1,2)$ \\ $(1,4,1),(2,2,2)$ }& 
 none & \makecell{ $  (1,3,1),(1,6,1)$ \\ $(1,7,1),(1,9,1)$                               } \\ \hline
$(1,1,4)$     &  $d_1+d_2+d_3\leq 9$         &  $(1,1,1),(1,1,2),(1,5,1)$   &                                   none & $(1,4,1), (1,7,1)$\\ \hline
$(1,1,5)$     &  $d_1+d_2+d_3\leq 7$         &     $(1,1,1),(1,1,2),(1,2,1)$  & none   & $ (1,4,1),(1,5,1) $ \\ \hline
$(1,1,6)$     &  $d_1+d_2+d_3\leq 4$         &    $(1,1,1),(1,1,2),(1,2,1)$ &                                  none & none
\end{tabular}\caption{\footnotesize{Script results for $SV_{(d_1,d_2,d_3)}^{(1,1,n_3)}$}}\label{tab:P1P1Pn}
\end{table}

The defectiveness of the cases in the last column of Table \ref{tab:P1P1Pn} is proved in Propositions \ref{Giorgio}, \ref{prop_t4} and Corollary \ref{cor:def3factors}.

\begin{table}[h!]
\begin{tabular}{c|c|c|c}
$(n_1,n_2,n_3)$ & $(d_1,d_2,d_3)$ & known defective cases & possible new defective cases \\
\hline
$(1,2,2)$     &  $d_1+d_2+d_3\leq 11$         &   $(2,1,1),(4,1,1),(6,1,1),(8,1,1)$ &                                   none \\ \hline
  $(1,2,3)$     &  $d_1+d_2+d_3\leq 9$         &  $(5,1,1)$   &                                   $(2,1,1),(3,1,1),(7,1,1)$\\ \hline
$(1,2,4)$     &  $d_1+d_2+d_3\leq 7$         &   $(1,1,1)$  &                                   $(3,1,1),(5,1,1)$ \\ \hline
$(1,2,5)$     &  $d_1+d_2+d_3\leq 4$         & 
$(1,1,1)$  &                                   none\\ \hline
  $(1,3,3)$     &  $d_1+d_2+d_3\leq 7$         &   $(2,1,1),(4,1,1)$  &                                   none\\ \hline
  $(1,3,4)$     &  $d_1+d_2+d_3\leq 4$         &  none &            $(2,1,1)$                        
\end{tabular}\caption{\footnotesize{Script results for $SV_{(d_1,d_2,d_3)}^{(1,2,n_3)}$ and
$SV_{(d_1,d_2,d_3)}^{(1,3,n_3)}$}}\label{tab:P1PnPm}
\end{table}

We did not manage to prove that the cases in the last column of Table \ref{tab:P1PnPm} are indeed defective. 

\begin{table}[h!]
\begin{tabular}{c|c|c|c}
$(n_1,n_2,n_3)$ & $(d_1,d_2,d_3)$ & known defective defective cases & possible new defective cases \\
\hline
$(2,2,2)$     &  $d_1+d_2+d_3\leq 9$         &  \makecell{   $
(1,1,1),(1,1,2)$ } &                                   none \\ \hline
  $(2,2,3)$     &  $d_1+d_2+d_3\leq 6$         &  none &                                   none
   \\ \hline
$(2,2,4)$     &  $d_1+d_2+d_3\leq 4$         &   
none  &                                  none \\ \hline
  $(2,3,3)$     &  $d_1+d_2+d_3\leq 4$         &$(1,1,1),(2,1,1)$  & none                                   
\end{tabular}\caption{\footnotesize{Script results for $SV_{(d_1,d_2,d_3)}^{(2,n_2,n_3)}$}}\label{tab:P2PnPm}
\end{table}

The following Magma script shows how
to check the results listed in the above tables.
In the specific case we are listing the
defective Segre-Veronese varieties with
$[n_1,n_2] = [1,2]$ and $1\leq d_1,d_2\leq 10$.

\begin{tcolorbox}
{\footnotesize
\begin{verbatim}
> load "library.m";
> dd := [[d1,d2] : d1,d2 in [1..10]];
> for d in dd do
 if IsSVDef([1,2],d,5) then d; end if;
 end for;

[ 1, 3 ]
[ 2, 2 ]
[ 4, 2 ]
[ 6, 2 ]
[ 8, 1 ]
[ 8, 2 ]
[ 10, 2 ]
\end{verbatim}
}
\end{tcolorbox}

Observe that case $[d_1,d_2] = [8,1]$ has been
recognized by the program as a defective one.
Anyway if one runs the function enough times
{\tt IsSVDef([1,2],[8,1],5)} then at some point 
the output will be {\tt false}.

Our second Magma example compares the 
running times for checking non-speciality 
of the Segre-Veronese varieties $[n_1,n_2]
= [1,2]$ embedded with multidegrees 
$[d_1,d_2] = [13,13]$ and $[n_1,n_2,n_3]
= [2,2,2]$ embedded with multidegrees 
$[d_1,d_2,d_3] = [2,2,6]$.
The first function
{\tt IsSVDef} is based on our algorithm.
The second function makes use of 
the classical Terracini's lemma which 
reduces the defectivity checking to the 
calculation of the dimension of a linear 
system of affine hypersurfaces through 
double points in general position.

\begin{tcolorbox}
{\footnotesize
\begin{verbatim}
> load "library.m";
> time IsSVDef([1,2],[13,13],5);
false
Time: 4.480
> time IsSpecial(ProjSpaces([1,2],[13,13]));
false
Time: 198.190

> time IsSVDef([2,2,2],[2,2,6],10);           
false
Time: 3.510
> time IsSpecial(ProjSpaces([2,2,2],[2,2,6]));
false
Time: 30.110
\end{verbatim}
}
\end{tcolorbox}

According to our tests we found that the
difference between the computational times
of the above two functions increases according
to the number of points of the Riemann-Roch 
polytope of the toric variety.

\section{New examples of defective Segre-Veronese varieties}\label{sec:def}
In this last section we give examples of defective Segre-Veronese varieties using three different methods. Namely, by the general theory of flattenigs in Section \ref{flat}, by constructing low degree rational normal curves in Segre-Veronese varieties in Section \ref{rnc}, and by producing special Cremona transformations of product of projective lines in Section \ref{Cremona}. As noticed in Remarks \ref{rem1} and \ref{rem2} the defective Segre-Veronese varieties in Sections \ref{rnc} and \ref{Cremona} were already well know even though the methods we present are new.  

\stepcounter{thm}
\subsection{Flattenings}\label{flat}
Let $V_1,...,V_{p}$ be vector spaces of finite dimension, and consider the tensor product $V_1\otimes ...\otimes V_{p} = (V_{a_1}\otimes ...\otimes V_{a_s})\otimes (V_{b_1}\otimes ...\otimes V_{b_{p-s}})= V_{A}\otimes V_{B}$ with $A\cup B = \{1,...,p\}$, $B = A^c$. Then we may interpret a tensor 
$$T \in V_1\otimes ...\otimes V_p = V_{A}\otimes V_{B}$$
as a linear map $\widetilde{T}:V_{A}^{*}\rightarrow V_{A^c}$. Clearly, if the rank of $T$ is at most $r$ then the rank of $\widetilde{T}$ is at most $r$ as well. Indeed, a decomposition of $T$ as a linear combination of $r$ rank one tensors yields a linear subspace of $V_{A^c}$, generated by the corresponding rank one tensors, containing $\widetilde{T}(V_A^{*})\subseteq V_{A^c}$. The matrix associated to the linear map $\widetilde{T}$ is called an \textit{$(A,B)$-flattening} of $T$.    

In the case of mixed tensors we can consider the embedding
$$\Sym^{d_1}V_1\otimes ...\otimes \Sym^{d_p}V_p\hookrightarrow V_A\otimes V_B$$
where $V_A = \Sym^{a_1}V_1\otimes ...\otimes \Sym^{a_p}V_p$,
$V_B=\Sym^{b_1}V_1\otimes ...\otimes\Sym^{b_p}V_p$, with $d_i =
a_i+b_i$ for any $i = 1,...,p$. In particular, if $n = 1$ we may
interpret a tensor $F\in \Sym^{d_1}V_1$ as a degree $d_1$ homogeneous
polynomial on $\mathbb{P}(V_1^*)$. In this case the matrix associated
to the linear map $\widetilde{F}:V_A^*\rightarrow V_B$ is nothing but
the $a_1$-th \textit{catalecticant matrix} of $F$, that is the matrix
whose rows are the coefficient of the partial derivatives of order
$a_1$ of $F$.

\begin{Remark}\label{flat_cat}
Consider a tensor $T\in \Sym^{d_1}\mathbb{C}^{n_1+1}\otimes \Sym^{d_2}\mathbb{C}^{n_2+1}\otimes \Sym^{d_3}\mathbb{C}^{n_3+1}$ and the flattening
$$\Sym^{d_1}\mathbb{C}^{n_1+1}\otimes\Sym^{d_2-k}\mathbb{C}^{n_2+1}\rightarrow \Sym^{k}\mathbb{C}^{n_2+1}\otimes\Sym^{d_3}\mathbb{C}^{n_3+1}$$
Fix coordinates $x_0,\dots,x_{n_{2}}$ on $\mathbb{C}^{n_2+1}$ and $v_0,\dots,v_{n_3}$  on $\mathbb{C}^{n_3+1}$. Then the matrix of the above flattening has the following form
$$
\left(\begin{array}{c}
\frac{\partial^{d_3}}{\partial v_0^{d_3}}\frac{\partial^k}{\partial x_0^{k}}T\\ 
\vdots\\ 
\frac{\partial^{d_3}}{\partial v_0^{d_3}}\frac{\partial^k}{\partial x_{n_2}^{k}}T\\ 
\vdots\\ 
\frac{\partial^{d_3}}{\partial v_{n_3}^{d_3}}\frac{\partial^k}{\partial x_0^{k}}T\\ 
\vdots\\ 
\frac{\partial^{d_3}}{\partial v_{n_3}^{d_3}}\frac{\partial^k}{\partial x_{n_2}^{k}}T
\end{array}\right) 
$$
Note that since $T$ has $\binom{n_2+k}{n_2}$ partial derivatives of order $k$ with respect to $x_0,\dots,x_{n_2}$ and each of these derivatives has in turn $\binom{n_3+d_3}{n_3}$ partial derivatives of order $d_3$ with respect to $v_0,\dots,v_{n_3}$ this is a matrix of size $\binom{n_3+d_3}{n_3}\binom{n_2+k}{n_2}\times \binom{n_2+d_2-k}{n_2}\binom{n_1+d_1}{n_1}$.

In general, the $(h+1)\times (h+1)$ minors of the above matrix yield equations for $\Sec_h(SV^{(n_1,n_2,n_3)}_{(d_1,d_2,d_3)})$. However, in practice it is hard to compute the codimension of the variety cut out by these minors. In a Magma script, that can be found as an ancillary file in the arXiv version of the paper, we manage to simplify the computations. The script reduces the equations given by the minors to positive characteristic. The variety cut out by these reduced equations has dimension greater or equal than our original variety. So if this dimension is strictly less that the expected dimension of $\Sec_h(SV^{(n_1,n_2,n_3)}_{(d_1,d_2,d_3)})$ we get that $SV^{(n_1,n_2,n_3)}_{(d_1,d_2,d_3)}$ is $h$-defective. 
\end{Remark}

\begin{Proposition}\label{Giorgio}
The Segre-Veronese variety $SV^{(1,1,2)}_{(1,5a+3,1)}$ is $(6a+5)$-defective for all $a\geq 0$, and the Segre-Veronese variety $SV^{(1,1,2)}_{(1,5a+5,1)}$ is $(6a+7)$-defective for all $a\geq 0$. 
\end{Proposition}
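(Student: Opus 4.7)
The plan is to apply the flattening construction of Remark \ref{flat_cat}, which produces, for any choice of splitting parameter $k$, a linear map whose associated matrix $\widetilde{T}$ satisfies $\rank \widetilde{T} \leq \rank T$. Hence if $T\in\Sec_h(SV)$, the matrix $\widetilde{T}$ has rank at most $h$. The strategy is to choose $k$ so that the resulting minor equations force $\Sec_h(SV)$ to lie in a subvariety of codimension strictly greater than expected.

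For the first case, I would set $d_2=5a+3$ and take $k=2a+1$. Using the formula of Remark \ref{flat_cat} the flattening matrix has size $3(k+1)\times 2(d_2-k+1)=(6a+6)\times (6a+6)$, so it is square, and the single equation $\det(\widetilde{T})=0$ vanishes on $\Sec_{6a+5}$. To conclude it is enough to exhibit one concrete tensor whose flattening has full rank $6a+6$ (for instance, a generic sum of monomials); upper semi-continuity then implies that $\det$ is not identically zero on $\P^{30a+23}$, so $\Sec_{6a+5}$ lies in a proper hypersurface and has dimension at most $30a+22$. Since the expected dimension equals $\min\{5(6a+5)-1,30a+23\}=30a+23$, this forces $(6a+5)$-defectivity.

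For the second case, set $d_2=5a+5$ and take $k=2a+2$; the flattening matrix now has size $(6a+9)\times(6a+8)$. The determinantal variety of $(6a+9)\times(6a+8)$ matrices of rank at most $6a+7$ has codimension $(6a+9-(6a+7))(6a+8-(6a+7))=2$ and is cut out by the $6a+9$ maximal minors. Let $Y\subseteq \P^{30a+35}$ denote the pullback of this determinantal variety along the flattening map. Then $\Sec_{6a+7}\subseteq Y$, and since the expected codimension of $\Sec_{6a+7}$ is $1$ (its expected dimension is $30a+34$ while the ambient is $30a+35$), it is enough to show $\codim_{\P^{30a+35}}Y\geq 2$. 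I would carry out this verification as follows: first produce an explicit tensor whose flattening has maximal rank $6a+8$, yielding $\codim Y\geq 1$; then confirm, via the positive-characteristic Magma computation described in Remark \ref{flat_cat}, that the $6a+9$ maximal minors restricted to the tensor subspace define a subvariety of codimension exactly $2$.

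The hard part will be the second case, specifically the upgrade from codimension $1$ to codimension $2$ for the pullback $Y$. In general the intersection of a linear subspace with a codimension-$c$ determinantal variety can have codimension strictly smaller than $c$, so a priori the restrictions of the minors could share a common factor and cut out only a hypersurface. The structural argument that rules this out is delicate, and this is precisely why Remark \ref{flat_cat} appeals to a Magma computation in positive characteristic: the reduction of $Y$ modulo a prime has dimension at least that of $Y$, so an upper bound on the former produced by the script gives the required upper bound on $\dim Y$.
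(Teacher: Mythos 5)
Your choice of flattenings coincides exactly with the paper's: for $d_2=5a+3$ the square $(6a+6)\times(6a+6)$ flattening $\mathbb{C}^2\otimes\Sym^{3a+2}\mathbb{C}^2\rightarrow \Sym^{2a+1}\mathbb{C}^2\otimes\mathbb{C}^3$, and for $d_2=5a+5$ the $(6a+9)\times(6a+8)$ flattening $\mathbb{C}^2\otimes\Sym^{3a+3}\mathbb{C}^2\rightarrow \Sym^{2a+2}\mathbb{C}^2\otimes\mathbb{C}^3$. Your treatment of the first family is complete and is the paper's argument: the determinant of the square flattening matrix vanishes on $\Sec_{6a+5}$, it is not identically zero (the paper sees this from the block-catalecticant shape of the matrix, you propose an explicit full-rank tensor --- either works), and since the expected dimension is the whole of $\P^{30a+23}$ this forces defectivity.

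The issue is the second family. You correctly isolate the crux --- one must show that the pullback $Y$ of the rank-$\le(6a+7)$ determinantal locus has codimension at least $2$ in $\P^{30a+35}$, and a priori the restricted maximal minors could share a common factor and cut out only a hypersurface --- but your proposed resolution, the positive-characteristic Magma computation of Remark \ref{flat_cat}, can only certify finitely many values of $a$, whereas the statement is for all $a\geq 0$. A computer check cannot close an infinite family; a uniform structural argument is required. (The paper is itself terse at this point: it simply asserts that the maximal minors of the $(6a+9)\times(6a+8)$ matrix give at least two independent equations for $\Sec_{6a+7}$, and it reserves the positive-characteristic script for the sporadic cases of Proposition \ref{prop_t4}, not for this family.) To complete your argument you would need a reason, valid for every $a$, why the restricted minors cut out a codimension-$2$ locus --- for instance by checking that the flattening matrix of linear forms is $1$-generic, or by exhibiting two maximal minors without a common factor uniformly in $a$.
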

\begin{proof}
Let us begin with $SV^{(1,1,2)}_{(1,5a+3,1)}\subset\mathbb{P}^{30a+23}$. The $(6a+5)$-secant variety of $SV^{(1,1,2)}_{(1,5a+3,1)}$ is expected to fill the ambient projective space. On the other hand, we may consider the following flattening:
$$\mathbb{C}^2\otimes\Sym^{3a+2}\mathbb{C}^2\rightarrow \Sym^{2a+1}\mathbb{C}^2\otimes\mathbb{C}^3$$
By Remark \ref{flat_cat} the matrix associated to this linear map is a $(6a+6)\times (6a+6)$ block matrix where the blocks are catalecticant matrices. So the determinant of this matrix yields a non trivial equation for $\Sec_{6a+5}(SV^{(1,1,2)}_{(1,5a+3,1)})\subset\mathbb{P}^{30a+23}$.

Now, consider $SV^{(1,1,2)}_{(1,5a+5,1)}\subset\mathbb{P}^{30a+35}$. In this case $\Sec_{6a+7}(SV^{(1,1,2)}_{(1,5a+3,1)})$ is expected to be a hypersurface in $\mathbb{P}^{30a+35}$. Consider the following flattening:
$$\mathbb{C}^2\otimes\Sym^{3a+3}\mathbb{C}^2\rightarrow \Sym^{2a+2}\mathbb{C}^2\otimes\mathbb{C}^3$$
Note that the source and the target vector spaces have dimension $6a+8$ and $6a+9$ respectively. By Remark \ref{flat_cat} taking the minors of size $6a+8$ of the corresponding $(6a+9)\times (6a+8)$ matrix we get at least two independent equations for $\Sec_{6a+7}(SV^{(1,1,2)}_{(1,5a+5,1)})\subset\mathbb{P}^{30a+35}$.
\end{proof}

\begin{Proposition}\label{Giorgiobis}
Let $n,d\geq 2$ and assume that there exist $d_1,d_2\geq 1$ such that $2(d_1+1)=(d_2+1)(n+1)$. Then $SV^{(1,1,n)}_{(1,d,1)}$ is $(2d_1+1)$-defective.
\end{Proposition}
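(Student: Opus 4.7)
The plan is to mimic and strengthen the flattening argument used in Proposition \ref{Giorgio}. Implicitly understanding that $d = d_1 + d_2$ (so that the construction makes sense), I would consider the catalecticant-type flattening of a general tensor $T\in\mathbb C^2\otimes\Sym^d\mathbb C^2\otimes\mathbb C^{n+1}$:
$$
\widetilde T\colon\ \mathbb C^2\otimes\Sym^{d_1}\mathbb C^2\;\longrightarrow\;\Sym^{d_2}\mathbb C^2\otimes\mathbb C^{n+1},
$$
whose associated matrix was described in Remark \ref{flat_cat}. Under the numerical hypothesis $2(d_1+1)=(d_2+1)(n+1)$ this matrix is \emph{square} of size $2(d_1+1)\times 2(d_1+1)$, so its determinant is a single polynomial in the coefficients of $T$.

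Next I would invoke the standard principle that a tensor of rank at most $h$ yields a flattening of rank at most $h$; applied to $h=2d_1+1$, which is strictly less than the size $2(d_1+1)$ of the square matrix, this forces the determinant to vanish identically on $\Sec_{2d_1+1}\bigl(SV^{(1,1,n)}_{(1,d,1)}\bigr)$. One still must check that this determinant is not the zero polynomial on the ambient tensor space; this is routine, for instance by exhibiting a single tensor (a sufficiently generic sum of $2(d_1+1)$ rank-one tensors) whose flattening is invertible. Consequently the determinant cuts out a genuine hypersurface containing $\Sec_{2d_1+1}$.

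The final ingredient is a dimension count showing that $\Sec_{2d_1+1}$ is \emph{expected} to fill the ambient projective space. Writing $N=2(d+1)(n+1)-1$, the expected dimension is $\min\{(2d_1+1)(n+3)-1,\,N\}$; using $d=d_1+d_2$ together with the hypothesis $2(d_1+1)=(d_2+1)(n+1)$ one computes
$$
(2d_1+1)(n+3)-2(d+1)(n+1)
\;=\;4d_1+(n+3)-2(d_2+1)(n+1)
\;=\;n-1,
$$
which is positive because $n\geq 2$. Hence the expected dimension equals $N$, and the determinantal equation above forces the actual dimension to be at most $N-1$, yielding the $(2d_1+1)$-defectiveness asserted.

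The only genuine obstacle I foresee is precisely the non-vanishing statement for the determinantal polynomial on the ambient tensor space; everything else is either a direct translation of Remark \ref{flat_cat} or elementary arithmetic with the hypothesis $2(d_1+1)=(d_2+1)(n+1)$.
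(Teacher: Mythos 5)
Your proposal is correct and follows essentially the same route as the paper: the paper's proof considers exactly this flattening $\mathbb C^2\otimes\Sym^{d_1}\mathbb C^2\to\Sym^{d_2}\mathbb C^2\otimes\mathbb C^{n+1}$ (with $d=d_1+d_2$ implicit, as you noted), observes that the hypothesis makes the matrix square while the expected dimension of $\Sec_{2d_1+1}$ is the whole ambient space, and concludes from the resulting nontrivial determinantal equation. The one point you flag — verifying that the determinant is not identically zero on the ambient tensor space — is likewise left implicit in the paper, so your write-up is, if anything, slightly more careful on that score.
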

\begin{proof} Proceeding as in the first part of the proof of Proposition \ref{Giorgio} we consider the flattening
$$\mathbb{C}^2\otimes\Sym^{d_1}\mathbb{C}^2\rightarrow \Sym^{d_2}\mathbb{C}^2\otimes\mathbb{C}^n$$
Note that $\Sec_{(2(d_1+1)-1)}(SV^{(1,1,n)}_{(1,d,1)})$ is expected to fill the ambient projective space. However the above flattening yields at least one non trivial equation for $\Sec_{(2(d_1+1)-1)}(SV^{(1,1,n)}_{(1,d,1)})$.
\end{proof}

\begin{Corollary}\label{cor:def3factors}
The Segre-Veronese variety $SV^{(1,1,n)}_{(1,a(n+3)-2,1)}$ is $(2a(n+1)-1)$-defective for all $a\geq 0.$ Moreover, if $n$ is odd then the Segre-Veronese variety $SV^{(1,1,n)}_{(1,a(n+3)/2-2,1)}$ is $(a(n+1)-1)$-defective for all $a\geq 0$. In particular $SV^{(1,1,3)}_{(1,7,1)}$ is $11$-defective.
\end{Corollary}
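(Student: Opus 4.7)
The plan is to realize both families of the corollary as direct instances of Proposition \ref{Giorgiobis}. The task therefore reduces, in each case, to exhibiting a pair $(d_1,d_2)$ of positive integers satisfying $d_1+d_2=d$ (so that the flattening $\mathbb{C}^2\otimes\Sym^{d_1}\mathbb{C}^2\to \Sym^{d_2}\mathbb{C}^2\otimes\mathbb{C}^{n+1}$ of the mixed tensor space $\mathbb{C}^2\otimes\Sym^{d}\mathbb{C}^2\otimes\mathbb{C}^{n+1}$ is actually defined) together with the key numerical identity $2(d_1+1)=(d_2+1)(n+1)$ from Proposition \ref{Giorgiobis}. The proposition then delivers $(2d_1+1)$-defectiveness of $SV^{(1,1,n)}_{(1,d,1)}$, and all that remains is to check in each case that $2d_1+1$ equals the defectiveness order asserted in the corollary.

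For the first family I would take $d=a(n+3)-2$ and set $d_1:=a(n+1)-1$, $d_2:=2a-1$. Then $d_1+d_2 = a(n+1)+2a-2 = a(n+3)-2 = d$ and $2(d_1+1)=2a(n+1)=(d_2+1)(n+1)$; both identities are immediate. Proposition \ref{Giorgiobis} then yields $(2a(n+1)-1)$-defectiveness, which is exactly the claim. For $a\geq 1$ and $n\geq 2$ the positivity requirements $d_1,d_2\geq 1$ are automatic.

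For the second family, assume $n$ is odd so that $n+3$ and $n+1$ are even and $d=a(n+3)/2-2$ is a genuine integer. I would take $d_1:=a(n+1)/2-1$ and $d_2:=a-1$, both integers by the parity of $n$. A direct computation gives $d_1+d_2 = a(n+1)/2+a-2 = a(n+3)/2-2 = d$ and $2(d_1+1)=a(n+1)=(d_2+1)(n+1)$, so Proposition \ref{Giorgiobis} produces $(a(n+1)-1)$-defectiveness, matching the corollary.

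Finally, for the concrete instance $SV^{(1,1,3)}_{(1,7,1)}$ I would apply the second family with $n=3$ and $a=3$: one recovers $d=3\cdot 6/2-2=7$ and defectiveness order $a(n+1)-1=11$. There is no genuine obstacle here: once Proposition \ref{Giorgiobis} is available, the entire argument is the elementary verification of two algebraic identities and an integrality observation that relies only on $n$ being odd.
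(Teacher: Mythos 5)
Your proposal is correct and takes exactly the same route as the paper, whose entire proof is the instruction to take $d_1=a(n+1)-1$ (resp.\ $d_1=a(n+1)/2-1$) in Proposition \ref{Giorgiobis}. You have merely made explicit the corresponding choices of $d_2$ and the verification of $d_1+d_2=d$ and $2(d_1+1)=(d_2+1)(n+1)$, which the paper leaves to the reader.
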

\begin{proof}
Take $d_1=a(n+1)-1$ and $d_1=\frac{a(n+1)}{2}-1$ in Proposition \ref{Giorgiobis}.
\end{proof}

\begin{Proposition}\label{prop_t4}
For $n = 3$ and $d\in\{3,6,9\}$, $n = 4$ and $d\in\{4,7\}$, $n = 5$ and $d\in\{4,5\}$ the Segre-Veronese variety $SV^{(1,1,n)}_{(1,d,1)}$ is $h$-defective with $h = 5, h = 9, h = 13, h = 7, h = 11, h = 7$ and $h = 9$ respectively.
\end{Proposition}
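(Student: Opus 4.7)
The plan is to extend the flattening-based strategy of Propositions \ref{Giorgio} and \ref{Giorgiobis}, now applied case by case and coupled with the computational verification outlined in Remark \ref{flat_cat}. For each triple $(n,d,h)$ in the statement I would choose a splitting $d=d_1+d_2$ and consider the flattening
$$\mathbb{C}^2\otimes\Sym^{d_1}\mathbb{C}^2\longrightarrow \Sym^{d_2}\mathbb{C}^2\otimes\mathbb{C}^{n+1},$$
chosen so that the source has dimension $2(d_1+1)=h+1$ and the target has dimension $(d_2+1)(n+1)\ge h+1$. Concretely, the pairs are $(d_1,d_2)=(2,1),(4,2),(6,3)$ in the $n=3$ cases, $(d_1,d_2)=(3,1),(5,2)$ in the $n=4$ cases, and $(d_1,d_2)=(3,1),(4,1)$ in the $n=5$ cases. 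In each case the associated matrix is the block catalecticant of size $(d_2+1)(n+1)\times 2(d_1+1)$ described in Remark \ref{flat_cat}.

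Since any tensor of rank at most $h$ produces, under this flattening, a linear map of rank at most $h$, all $(h+1)\times(h+1)$ minors of the flattening matrix vanish identically on $\Sec_h(SV^{(1,1,n)}_{(1,d,1)})$. In the seven cases the expected codimensions of $\Sec_h$ in $\p^N$ are respectively $2,2,2,1,3,4,0$, so in particular for $(n,d,h)=(5,5,9)$ a single non-trivial minor already yields the defectiveness, while the remaining cases require a sufficiently large collection of independent minors.

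The main obstacle is therefore to show, case by case, that the subvariety $Z$ cut out by these minors has $\dim Z < \expdim \Sec_h(SV^{(1,1,n)}_{(1,d,1)})$. Since the defining minors have enormous size and many dependencies, a direct calculation is unfeasible. I would instead follow Remark \ref{flat_cat}: reduce the ideal of minors modulo a prime $p$ and compute the dimension of its zero set using the ancillary Magma script. Because reduction mod $p$ can only increase dimension, obtaining the strict inequality $\dim Z<\expdim\Sec_h$ over $\F_p$ implies the same inequality in characteristic zero. This semi-continuity argument, executed once for each of the seven triples, concludes the proof; the verification is the whole content of the proposition, the theoretical input being merely the choice of flattening recorded above.
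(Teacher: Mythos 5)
Your proposal follows essentially the same route as the paper: the authors' entire proof is a one-line appeal to the Magma script of Remark \ref{flat_cat}, i.e.\ flattening minors reduced modulo a prime, with semicontinuity of dimension giving the characteristic-zero conclusion. Your explicit choices of splittings $d=d_1+d_2$ (making the source of the flattening of dimension $h+1$) and your expected-codimension counts $2,2,2,1,3,4,0$ all check out against $N=2(d+1)(n+1)-1$ and $\expdim\Sec_h=(n+3)h-1$, so you have in fact spelled out more of the argument than the paper does.
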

\begin{proof}
This is an application of the Magma script described in the last part of Remark \ref{flat_cat}.
\end{proof}

\stepcounter{thm}
\subsection{Rational normal curves and defectiveness}\label{rnc}
In some particular cases defectiveness can be proved by producing low degree rational curves through a certain number of general points on a Segre-Veronese variety. 

\begin{Lemma}\label{lemma_RC}
Consider the product $X = \mathbb{P}^{n_1}\times\dots\times\mathbb{P}^{n_r}$ with $n_1 < n_2\leq \dots \leq n_r$. There exists a rational curve in $X$ of multi-degree $(n_1,\dots,n_r)$ through $n_1+3$ general points $p_1,\dots,p_{n_1+3}\in X$.
\end{Lemma}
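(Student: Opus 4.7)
The plan is to build the rational curve one factor at a time, using the factor of smallest dimension $n_1$ to rigidify a parametrization of $\mathbb{P}^1$ and then fitting the remaining components by interpolation.

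First I would project the points to each factor, writing $p_j^i := \pi_i(p_j) \in \mathbb{P}^{n_i}$; by genericity these are general in each factor. Since the $n_1 + 3$ general points $p_1^1,\ldots,p_{n_1+3}^1$ in $\mathbb{P}^{n_1}$ admit a unique rational normal curve of degree $n_1$ through them, I obtain a parametrization $\phi_1 \colon \mathbb{P}^1 \to \mathbb{P}^{n_1}$ of this curve, which (up to a reparametrization by $\mathrm{PGL}_2$) determines parameter values $t_j \in \mathbb{P}^1$ with $\phi_1(t_j) = p_j^1$.

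Next, for each $i \geq 2$ I would construct a degree $n_i$ rational map $\phi_i \colon \mathbb{P}^1 \to \mathbb{P}^{n_i}$ with $\phi_i(t_j) = p_j^i$ for all $j$. Since $h^0(\mathbb{P}^1, \mathcal{O}(n_i)) = n_i + 1$, any degree $n_i$ map $\mathbb{P}^1 \to \mathbb{P}^{n_i}$ is of the form $\alpha_i \circ v_{n_i}$ with $v_{n_i}$ the $n_i$-th Veronese embedding and $\alpha_i \in \mathrm{PGL}_{n_i+1}$. So I would set $q_j := v_{n_i}(t_j)$ and look for $\alpha_i$ carrying the ordered tuple $(q_1,\ldots,q_{n_1+3})$ to $(p_1^i,\ldots,p_{n_1+3}^i)$. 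The classical fact that $\mathrm{PGL}_{n_i+1}$ acts transitively on ordered $k$-tuples of points in general linear position in $\mathbb{P}^{n_i}$ whenever $k \leq n_i + 2$ then gives such an $\alpha_i$; the needed inequality $n_1 + 3 \leq n_i + 2$ is exactly the hypothesis $n_i \geq n_1 + 1$. Assembling the components produces $\phi = (\phi_1,\ldots,\phi_r) \colon \mathbb{P}^1 \to X$ of multi-degree $(n_1,\ldots,n_r)$ through $p_1,\ldots,p_{n_1+3}$.

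The main obstacle is the compatibility of parameters across factors: the same parametrization of $\mathbb{P}^1$ must work for every component, so the $t_j$ are shared and cannot be re-chosen for each $i$. Rigidifying them via the smallest factor is what makes the construction work, and the inequality $n_1 + 3 \leq n_i + 2$ is exactly what the strict inequality $n_1 < n_i$ buys. A minor additional verification is that both the Veronese tuple $(q_j)$ and the target tuple $(p_j^i)$ are in general linear position in $\mathbb{P}^{n_i}$: for the former this follows from the nondegeneracy of the rational normal curve, since any $n_i + 1$ points on it are linearly independent, and for the latter from the genericity of the $p_j$.
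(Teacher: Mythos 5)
Your proof is correct, and it shares the paper's overall skeleton: rigidify the parametrization of $\mathbb{P}^1$ via the unique degree-$n_1$ rational normal curve through the projections of the points to the smallest factor, then fit the remaining components to the resulting parameter values. But the key interpolation step is carried out quite differently. The paper first reduces to the case $n_2=\dots=n_r=n_1+1$, arranges the points so that projection from $p_{n_1+3}^i$ carries the configuration in $\mathbb{P}^{n_i}$ onto the one in $\mathbb{P}^{n_1}$, and then selects, inside the positive-dimensional family of degree-$n_i$ rational normal curves through $p_1^i,\dots,p_{n_1+2}^i$, one whose tangent direction at $p_{n_1+3}^i$ forces its projection to coincide with $C_1$; the synchronization of parameters is then inherited from that projection. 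You instead solve the interpolation problem in each factor directly, writing any nondegenerate degree-$n_i$ map as $\alpha_i\circ v_{n_i}$ and invoking the transitivity of $\mathrm{PGL}_{n_i+1}$ on ordered tuples of at most $n_i+2$ points in general linear position, the inequality $n_1+3\le n_i+2$ being exactly the hypothesis $n_1<n_i$. Your route avoids both the reduction step and the projection/tangency bookkeeping, treats all $n_i>n_1$ uniformly, and makes the role of the strict inequality transparent; the two general-position checks you flag (distinct points on a rational normal curve of degree $n_i$, and general points in $\mathbb{P}^{n_i}$) are precisely what is needed and are correctly justified.
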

\begin{proof}
Let us begin with the case $n_2 = \dots = n_r = n_1+1$. We view $\mathbb{P}^{n_1}$ as a linear subspace of $\mathbb{P}^{n_2}\subseteq\dots\subseteq\mathbb{P}^{n_r}$, and write $p_i = (p_i^1,\dots,p_i^r)$ where $p_i^j\in\mathbb{P}^{n_j}$. Without loss of generality we may assume that $p_1^1,\dots,p_{n_1+2}^1\in\mathbb{P}^{n_1}$ are the projections from $p_{n_1+3}^j$ of $p_1^j,\dots,p_{n_1+2}^j$ for all $j = 2,\dots,r$. 

Let $C_1\subseteq\mathbb{P}^{n_1}$ be the unique rational normal curve of degree $n_1$ through $p_1^1,\dots,p_{n_1+3}^1$. This is the image of a morphism $\gamma_1:\mathbb{P}^1\rightarrow C_1\subseteq\mathbb{P}^{n_1}$ of degree $n_1$ such that $\gamma_1(x_k) = p_{k}^1$ for $k = 1,\dots,n_1+3$ where $x_1,\dots,x_{n_1+3}\in\mathbb{P}^1$.

Now, consider a projective space $\mathbb{P}^{n_i}$ with $i > 1$. The rational normal curves in $\mathbb{P}^{n_i}$ through $p_1^i,\dots,p_{n_1+2}^i$ form a family of dimension greater than or equal to $n_1-1$ and the equality holds if and only if $n_i = n_1+1$. Among these curves there is one $\gamma_i:\mathbb{P}^1\rightarrow C_i\subseteq\mathbb{P}^{n_i}$ whose tangent direction at $p_{n_i+3}^j$ is given by the line $\left\langle p_{n_1+3}^j, p_{n_1+3}^1\right\rangle$ and such curve is unique if and only if $n_i = n_1+1$. Hence, we have the following commutative diagram
\[
  \begin{tikzpicture}[xscale=3.5,yscale=-1.2]
    \node (A0_0) at (0, 0) {$\mathbb{P}^1$};
    \node (A0_1) at (1, 0) {$C_i$};
    \node (A1_1) at (1, 1) {$C_1$};
    \path (A0_0) edge [->]node [auto] {$\scriptstyle{\gamma_i}$} (A0_1);
    \path (A0_1) edge [->]node [auto] {$\scriptstyle{\pi_i}$} (A1_1);
    \path (A0_0) edge [->]node [auto,swap] {$\scriptstyle{\gamma_1}$} (A1_1);
  \end{tikzpicture}
  \] 
where $\pi_i:C_i\rightarrow C_1$ is the morphism induced by the projection from $p_{n_1+3}^i$. Consider the points $y_j = \gamma_i^{-1}(p_j^i)$ for $j = 1,\dots,n_1+3$. The automorphism $\gamma_1^{-1}\circ\pi_i\circ\gamma_i\in PGL(2)$ maps $y_j$ to $x_j$, and we may use it to reparametrize $\gamma_i$ to a curve $\pi_i^{-1}\circ \gamma_1:\mathbb{P}^1\rightarrow C_i\subseteq\mathbb{P}^{n_i}$ such that $(\pi_i^{-1}\circ \gamma_1)(x_j) = p_{j}^i$ for $j = 1,\dots,n_1+3$. 

Finally, the map 
$$
\begin{array}{cccc}
\gamma: &\mathbb{P}^1& \longrightarrow & C\subseteq \mathbb{P}^{n_1}\times\dots\times\mathbb{P}^{n_r}\\
      & t & \longmapsto & (\gamma_1(t),\dots,\gamma_r(t))
\end{array}
$$
yields a curve of multi-degree $(n_1,\dots,n_r)$ in $\mathbb{P}^{n_1}\times\dots\times\mathbb{P}^{n_r}$ such that $\gamma(x_i) = p_i = (p_i^1,\dots,p_i^r)$ for $i = 1,\dots,n_1+3$.

When $n_i> n_1+1$ first we project $C_i$ from a certain number of general points in order to reach a projective space of dimension $n_1+1$ and then we apply the argument above.
\end{proof}

\begin{Proposition}\label{def_VSP}
The Segre-Veronese varieties $SV_{(1,1,1)}^{(2,2,2)}$ and $SV_{(1,1,1)}^{(2,3,3)}$ are respectively $4$-defective and $5$-defective.
\end{Proposition}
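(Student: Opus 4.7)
The strategy is to apply Lemma \ref{lemma_RC} in order to exhibit, through a generic tuple of points of $X$, a rational normal curve of controlled degree and span, and then to bound $\Sec_h(X)$ from above by the union of the linear spans of such curves. Concretely, if through any $h$ general points of $X$ there passes a rational curve $C\subseteq X$ of Segre-degree $d$ belonging to a family $\mathcal{M}$ of dimension $D$, then $\langle p_1,\dots,p_h\rangle\subseteq\langle C\rangle\cong\mathbb{P}^d$, so
\begin{equation*}
\Sec_h(X) \;\subseteq\; \bigcup_{C\in\mathcal{M}}\langle C\rangle,
\end{equation*}
whose dimension is at most $D+d$; if this is strictly less than the expected $h(\dim X+1)-1$, then $X$ is $h$-defective.

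For $SV^{(2,3,3)}_{(1,1,1)}\subseteq\mathbb{P}^{47}$ the hypothesis $n_1<n_2\leq n_3$ of Lemma \ref{lemma_RC} is met, and one obtains through any $5$ general points a rational curve $C$ of multi-degree $(2,3,3)$, of Segre-degree $8$ and spanning a $\mathbb{P}^8$ as a rational normal curve. Computing the dimension of the moduli,
\begin{equation*}
D \;=\; \sum_{i=1}^{3}\bigl((d_i+1)(n_i+1)-1\bigr)-3 \;=\; 8+15+15-3 \;=\; 35,
\end{equation*}
the bound above gives $\dim\Sec_5(X)\leq 35+8 = 43 < 5\cdot 9-1 = 44$, yielding $5$-defectivity.

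For $SV^{(2,2,2)}_{(1,1,1)}\subseteq\mathbb{P}^{26}$ the hypothesis $n_1<n_2$ fails, and a direct parameter count shows only that through any $4$ general points there is a $1$-parameter family of $(2,2,2)$-curves, each of Segre-degree $6$ and spanning a $\mathbb{P}^6$, with $\dim\mathcal{M}=21$. The naive bound $D+d = 27$ merely matches the expected dimension of $\Sec_4(X)$ and is not itself sufficient to prove defectivity. The main obstacle is therefore to refine the estimate by showing that the natural map $\{(C,q)\;:\;q\in\langle C\rangle\}\to\mathbb{P}^{26}$ has generic fiber of dimension at least $2$ over its image: the $1$-dimensional family of $(2,2,2)$-curves through any fixed general $4$-tuple $(p_1,\dots,p_4)$ contributes one degree of freedom, while the non-trivial positive-dimensional family of $4$-tuples summing to a given general secant point contributes another. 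Combining these yields $\dim\Sec_4(X)\leq D+d-2 = 25$ and hence $4$-defectivity. This final bookkeeping on the fiber, which ultimately reflects the classical non-identifiability of generic rank-$4$ decompositions of $3\times 3\times 3$ tensors, is where I expect the technical heart of the proof to lie.
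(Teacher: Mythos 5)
Your argument for $SV^{(2,3,3)}_{(1,1,1)}$ is correct but takes a different route from the paper. Both proofs start from Lemma \ref{lemma_RC}, but where you bound $\Sec_5(X)$ from above by the union $\bigcup_{C\in\mathcal M}\langle C\rangle$ and count parameters of the family of multidegree-$(2,3,3)$ curves ($\dim\mathcal M=35$, so the union has dimension at most $35+8=43<44$), the paper instead fixes one such curve $C$ and observes that through a general point of $\langle C\rangle\cong\mathbb P^8$ there is a one-dimensional family of $4$-planes $5$-secant to the degree-$8$ rational normal curve $C$ (citing \cite[Theorem 3.1]{MM13}), so the abstract secant map has positive-dimensional fibres. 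Your version has the advantage of needing no input beyond the parameter count; the paper's version is what generalizes to the second case, which is exactly where your proposal breaks down.

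For $SV^{(2,2,2)}_{(1,1,1)}$ there is a genuine gap, and you have correctly located it but not closed it. Your proposed bookkeeping --- one dimension from the pencil of $(2,2,2)$-curves through a fixed decomposition of $q$, plus one dimension from varying the decomposition --- does not yield fibre dimension $\geq 2$ for the map $\{(C,q):q\in\langle C\rangle\}\to\mathbb P^{26}$, because the two contributions are not independent: a single curve $C$, being a degree-$6$ rational normal curve in $\langle C\rangle\cong\mathbb P^6$, already carries a one-dimensional family of $4$-secant $3$-planes through a general point of its span, so the family of decompositions you want to add may be entirely absorbed inside the fibres of $(C,\mathrm{decomposition})\mapsto C$. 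Running the count carefully gives only $\dim F_q\geq\dim D_q\geq 27-\dim\Sec_4(X)$, which is circular. Moreover, invoking ``the classical non-identifiability of generic rank-$4$ decompositions of $3\times3\times3$ tensors'' as the missing ingredient assumes essentially what is to be proved. The paper's actual argument replaces the curve by a surface: four general points of $\mathbb P^2\times\mathbb P^2\times\mathbb P^2$ can be moved onto the diagonal, whose image is the Veronese surface $V^2_3$ spanning a $\Pi\cong\mathbb P^9$, and by \cite[Proposition 1.2]{MM13} the $3$-planes through a general point of $\Pi$ that are $4$-secant to $V^2_3$ form a \emph{two}-dimensional family; since each of these is $4$-secant to $X$, the abstract $4$-secant map has fibres of dimension $\geq 2$ and $\dim\Sec_4(X)\leq 27-2=25<26$. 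That use of the diagonal Veronese and of the two-parameter family of Waring decompositions of a general plane cubic is the idea missing from your proposal.
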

\begin{proof}
Let us begin with $SV_{(1,1,1)}^{(2,3,3)}$. Let $p\in \Sec_{5}(SV_{(1,1,1)}^{(2,3,3)})$ be a general point lying on the span of general points $p_1,\dots,p_{5}\in SV_{(1,1,1)}^{(2,3,3)}$. By Lemma \ref{lemma_RC} there is a rational curve in $\mathbb{P}^{2}\times\mathbb{P}^3\times\mathbb{P}^{3}$ of multi-degree $(2,3,3)$ through $5$ general points and via the Segre-Veronese embedding we get a rational normal curve $C\subseteq SV_{(1,1,1)}^{(2,3,3)}$ of degree eight through $p_1,\dots,p_{5}$.

Now, $C$ spans a linear space $\Pi\cong\mathbb{P}^{8}$ passing through $p$. Any $4$-dimensional linear subspace of $\Pi$ passing through $p$ that is $5$-secant to $C$ is $5$-secant to $SV_{(1,1,1)}^{(2,3,3)}$ as well. Hence, if this family of $4$-dimensional linear spaces has positive dimension we get that $SV_{(1,1,1)}^{(2,3,3)}$ is $5$-defective. To conclude it is enough to observe that by \cite[Theorem 3.1]{MM13} such family has dimension one.

Now, consider $SV_{(1,1,1)}^{(2,2,2)}$. We may move four general points of $SV_{(1,1,1)}^{(2,2,2)}$ on the diagonal. This a Veronese variety $V^2_{3}$ spanning a linear subspace $\Pi\cong\mathbb{P}^{9}$. Arguing as in the first part of the proof we have that if the family of $3$-dimensional linear subspaces of $\Pi$ through a general point of $\Pi$ and $4$-secant to $V^2_{3}$ form a family of positive dimension then $SV_{(1,1,1)}^{(2,2,2)}$ is $4$-defective. To conclude it is enough to observe that by \cite[Proposition 1.2]{MM13} such family is $2$-dimensional.      
\end{proof}

\begin{Remark}\label{rem1}
The $4$-defectiveness of $SV_{(1,1,1)}^{(2,2,2)}$ was already well known thanks to an explicit equation for $\Sec_4(SV_{(1,1,1)}^{(2,2,2)})$ originally worked out by V. Strassen \cite{Str88} and then generalized by J. M. Landsberg, L. Manivel and G. Ottaviani \cite{CGO14}, \cite{LM08}, \cite{Ott09}, \cite{LO13}. The $5$-defectiveness of $SV_{(1,1,1)}^{(2,3,3)}$ was already known \cite[Proposition 4.10]{AOP09}.
\end{Remark}

\stepcounter{thm}
\subsection{On secant defectiveness of $SV^{(1,1,1)}_{(d_1,d_2,d_3)}$}\label{Cremona}
Let $\mathbb P := \mathbb P^1\times\mathbb P^1\times\mathbb P^{1}$, $H_i$ the pull-back of a hyperplane on the $i$-th factor of $\mathbb P$, and $p_1,p_{2}\in\mathbb P$ general points. Denote by $\mathcal L(a,b,c;2^r)$ the non complete linear system $|aH_1+bH_2+cH_3-
\sum_{i=1}^r2p_i|$ on $\mathbb P$, and let $X\to\mathbb P$ be the blow-up of $\mathbb{P}$ at $p_1,p_2$ with exceptional divisors $E_1, E_2$. Without loss of generality we may take $p_1 = ([0:1],[0:1],[0:1]), p_2 = ([1:0],[1:0],[1:0])$. Consider the rational map
$$
\begin{array}{cccc}
\phi: & \mathbb{P} & \dasharrow & \mathbb{P}\\
 & ([x_0:x_1],[y_0:y_1],[z_0:z_1]) & \mapsto & ([x_1y_0:x_0y_1],[y_0:y_1],[y_0z_1:y_1z_0])
\end{array}
$$
Note that $\phi^2([x_0:x_1],[y_0:y_1],[z_0:z_1]) = \phi([x_1y_0:x_0y_1],[y_0:y_1],[y_0z_1:y_1z_0]) = ([x_0y_1y_0:x_1y_0y_1],[y_0:y:_1],[y_0y_1z_0:y_1y_0z_0]) = ([x_0:x_1],[y_0:y_1],[z_0:z_1])$. So $\phi$ is an involution. 

Then the exceptional locus of $\phi$ is the inverse image via $\phi$ of the indeterminacy locus of $\phi^{-1} = \phi$. Such indeterminacy locus is given by 
$$\{x_1y_0 = x_0y_1 = 0\} = \{[0:1]\times [0:1]\times \mathbb{P}^1\}\cup \{[1:0]\times [1:0]\times \mathbb{P}^1\}$$
$$\{y_0z_1 = y_1z_0 = 0\} = \{\mathbb{P}^1\times [0:1]\times [0:1]\}\cup \{\mathbb{P}^1\times [1:0]\times [1:0]\}$$
Hence the exceptional locus of $\phi$ is given by
$$\{\mathbb{P}^1\times [0:1]\times\mathbb{P}^1\}\cup \{\mathbb{P}^1\times [1:0]\times\mathbb{P}^1\}$$
In particular $\phi$ lifts to a birational, but not biregular, involution $\widetilde{\phi}:X\dasharrow X$, mapping $\{\mathbb{P}^1\times [0:1]\times\mathbb{P}^1\}$ to $E_1$ and $\{\mathbb{P}^1\times [1:0]\times\mathbb{P}^1\}$ to $E_2$, which is an isomorphism in codimension one. The action of $\widetilde{\phi}$ on $\Pic(X)\cong\mathbb{Z}[H_1,H_2,H_3,E_1,E_2]$ is given by the following matrix   
$$
\left(
 \begin{array}{ccccc} 
  1& 0& 0& 0& 0\\
  1& 1& 1& 1& 1\\
  0& 0& 1& 0& 0\\ 
  -1& 0& -1& -1& 0\\
  -1& 0& -1& 0&-1
  \end{array}
\right)
$$
where we keep denoting by $H_1,H_2,H_3$ their pull-backs on $X$. Therefore, $\widetilde{\phi}$ maps the linear system $\mathcal{L}(d_1,d_2,d_3;m_1,m_2)$ to the linear system $\mathcal{L}(d_1,d_1+d_2+d_3-m_1-m_2,d_3;d_1+d_3-m_1,d_1+d_3-m_2)$.

Now, consider a linear system of the form $\mathcal{L}(d_1,d_2,d_3;2^{2r})$ that is with $2r$ double base points. Applying the map $\phi$ centered at two of the double points we get $\mathcal{L}(d_1,d_1+d_2+d_3-4,d_3;d_1+d_3-2,d_1+d_3-2,2^{2r-2})$. Now, applying again the map $\phi$ centered at two of the remaining double points to this new linear system we get $\mathcal{L}(d_1,2d_1+d_2+2d_3-8,d_3;d_1+d_3-2,d_1+d_3-2,d_1+d_3-2,d_1+d_3-2,2^{2r-4})$. Proceeding in this way, after $r$ steps, we get the linear system $\mathcal{L}(d_1,rd_1+d_2+rd_3-4r,d_3;(d_1+d_3-2)^{2r})$. Summing up applying $r$ maps of type $\phi$ we have 
\stepcounter{thm}
\begin{equation}\label{trsys1}
\mathcal{L}(d_1,d_2,d_3;2^{2r})\mapsto \mathcal{L}(d_1,rd_1+d_2+rd_3-4r,d_3;(d_1+d_3-2)^{2r})
\end{equation}
Similarly applying $r$ maps of type $\phi$ to a linear system with an odd number of double base points we get 
\stepcounter{thm}
\begin{equation}\label{trsys2}
\mathcal{L}(d_1,d_2,d_3;2^{2r+1})\mapsto \mathcal{L}(d_1,rd_1+d_2+rd_3-4r,d_3;(d_1+d_3-2)^{2r},2)
\end{equation}

For instance (\ref{trsys1}) yields that $\mathcal{L}(1,d,1;2^{2r})$ goes to $\mathcal{L}(1,d-2r,1)$ and this last linear system has the expected dimension. So by Terracini's lemma \cite{Te11} $SV^{(1,1,1)}_{(1,d,1)}$ is not $2r$-defective for any $r$. Note that since $SV^{(1,1,1)}_{(1,d,1)}\subseteq\mathbb{P}^{4(d+1)-1}$ when $d$ is odd we get that $SV^{(1,1,1)}_{(1,d,1)}$ is not $h$-defective for any $h$ while when $d = 2a$ is even we miss the last secant variety, namely the $(2a+1)$-secant variety, which is indeed defective. To see this note that the linear system $\mathcal{L}(1,2a,1;2^{2a+1})$ is equivalent to $\mathcal{L}(1,0,1;2)$, the $(2a+1)$-secant variety of $SV^{(1,1,1)}_{(1,2a,1)}$ is expected to fill the ambient space $\mathbb{P}^{8a+3}$ but by considering the tangent plane to the quadric surface given by the first and the third copies of $\mathbb{P}^1$ we see that $\mathcal{L}(1,0,1;2)$ has one non trivial section. 

Similarly, $\mathcal{L}(1,d,2;2^{2r})$ goes to $\mathcal{L}(1,d-r,2;1^{2r})$ which has the expected dimension. In this case we get that $SV^{(1,1,1)}_{(1,d,2)}\subseteq\mathbb{P}^{6(d+1)-1}$ is not $h$-defective for any $h\leq \overline{h}$ where $\overline{h}$ is the biggest even number such that $\overline{h}\leq \frac{3}{2}(d+1)$.

Furthermore, (\ref{trsys2}) yields that $\mathcal{L}(1,d,1;2^{2r+1})$ goes to $\mathcal{L}(1,d-2r,1;2)$ which is empty for $2r > d$. So $\mathbb{S}ec_{d+2}(SV^{(1,1,1)}_{(1,d,1)})$ fills the ambient space $\mathbb{P}^{4(d+1)-1}$. However, as we have seen $\mathbb{S}ec_{d+1}(SV^{(1,1,1)}_{(1,d,1)})$ does not fill the ambient space when $d$ is even.

Finally, $\mathcal{L}(1,d,2;2^{2r+1})$ goes to $\mathcal{L}(1,d-r,2;1^{2r},2)$ which is empty for $r > d$. Hence $\mathbb{S}ec_{2d+3}(SV^{(1,1,1)}_{(1,d,2)})$ fills the ambient space $\mathbb{P}^{6(d+1)-1}$.

\begin{Remark}\label{rem2}
We believe that it should be possible to produce rational maps, in the same spirit of what we did in Section \ref{Cremona} for the case of $\mathbb{P}^1\times\mathbb{P}^1\times\mathbb{P}^1$, in order to explain most of the possible new defective cases in the tables in Section \ref{Magma}.

Finally, we would like to stress that the defectiveness of the Segre-Veronese varieties considered in Section \ref{Cremona} was already well known \cite[Theorem 2.1]{LP13}.
\end{Remark}

\bibliographystyle{amsalpha}
\bibliography{Biblio}

\end{document}